\def\F{{\mathcal F}}
\def\B{{\mathbb B}}
\def\G{{\mathcal G}}
\def\H{{\mathcal H}}
\def\Z{{\mathbb Z}}
\def\N{{\mathbb N}}
\newtheorem{theorem}{Theorem}
\newtheorem{proposition}{Proposition}
\newtheorem{definition}{Definition}
\newtheorem{lemma}{Lemma}
\newtheorem{remark}{Remark}
\newtheorem{corollary}{Corollary}
\newtheorem{example}{Example}
\title{Unique maximal Betti diagrams for Artinian Gorenstein
    $k$-algebras with the weak Lefschetz property}
\author{Ben Richert}
\begin{document}
\maketitle

\begin{abstract}
  We give an alternate proof for a theorem of Migliore and Nagel. In
  particular, we show that if $\H$ is an SI-sequence, then the
  collection of Betti diagrams for all Artinian Gorenstein $k$-algebras with
  the weak Lefschetz property and Hilbert function $\H$ has a unique
  largest element.
\end{abstract}
%\begin{keyword}
%  graded Betti numbers \sep  Hilbert
%  functions \sep Artinian Gorenstein $k$-algebras  \MSC[2010] 13D02\sep 13H10

%\end{keyword}
%\end{frontmatter}
\section{Introduction and Background}\label{s:introduction}
The graded Betti numbers of a module have received quite a bit of
attention, especially since the advent of computer algebra systems
allowing for the computation of examples in bulk. In this paper we
explore the relationship between graded Betti numbers and the weaker
Hilbert function invariant. In particular, we will take up a piece of
the Gorenstein version of the question, {\em given a Hilbert function
  $\H$, what graded Betti numbers can occur?}

To fix notation, we let $k$ be an infinite field. Then given a
polynomial ring $R$ and a homogeneous ideal $I\subseteq R$, we write
$H(R/I):\N\to \N$ to be the Hilbert function of $R/I$, so
$H(R/I,d)=\dim_k (R/I)_d$, and
$\beta^I_{i,j}=\dim_k({\rm Tor}_i(R/I,k))_j$ to be the
$(i,j)^{\rm th}$ graded Betti number of $R/I$. We will write
$\beta^{I}$ to refer to the Betti diagram of $R/I$ (which, following
the notation of the computer algebra system Macaulay 2, is a table
whose $(i,j)^{\rm th}$ entry is $\beta^I_{i,i+j}$) as a convenient way
to refer to all the graded Betti numbers of a module at once. Given an
$\mathcal O$-sequence $\H$ (that is, given any valid Hilbert function)
with $\H(0)=1$ we write ${\B}_{\H}$ to be the set of all $\beta^I$
such that $I\subseteq R$ is homogeneous with $H(R/I)=\H$.

To understand ${\B}_{\H}$ for fixed $\H$, the first step is to show
that this set has a sharp upper bound under the obvious component-wise
partial order. This was accomplished independently by Bigatti
\cite{Bigatti} and Hulett \cite{Hulett} in characteristic zero, and
later by Pardue \cite{Pardue} in characteristic $p$. These authors
showed that if $L$ is the lex ideal (guaranteed by Macaulay to exist)
such that $H(R/L)=\H$, then $\beta^L$ is the unique largest element of
${\B}_{\H}$. Various other authors have asked if important subsets of
$\B_{\H}$ also have unique upper bounds. For example, the Lex Plus
Powers Conjecture of Evans and Charalambous \cite{MerminMurai}
predicts that restricting $\B_{\H}$ to the Betti diagrams of quotients
of ideals containing regular sequences in prescribed degrees should
have a unique max. Aramova, Hibi, and Herzog \cite{AHH:SF} proved that
restricting $\B_{\H}$ to the Betti diagrams of quotients of squarefree
monomial ideals does in fact have a unique largest element.

One obvious subset of $\B_{\H}$ to consider consists of the Betti
diagrams of graded Artinian Gorenstein $k$-algebras. In general we
cannot predict when this subset is nonempty since there is currently
no analog to Macaulay's theorem for Gorensteins, but there is one well
understood class of Hilbert functions, and we turn our attention
there.

\begin{definition}\label{d:SIsequence}
  Given a $\mathcal O$-sequence $\H$, let $\Delta\H$ be the sequence
  with $\Delta\H(d)= \H(d)-\H(d-1)$, and, for $i>1$, let
  $\Delta^i\H=\Delta\Delta^{i-1}(\H)$. 

  When $\H=0$ we say $\ell(\H)=-1$, and for $\H\ne 0$, we write
  $$\ell(\H)=\max\{d\in \N\ |\ \H(d)\ne 0\}$$ if such a $d$ exists and
  $\ell(\H)=\infty$ otherwise.  Finally, given an
  $\mathcal O$-sequence $\H$ with $0<\ell(\H)<\infty$, we say that
  $\H$ is an SI-sequence (for Stanley-Iarrobino) if
  $\H(d)=\H(\ell(\H)-d)$ for all $0\le d\le \ell(\H)$ (that is, $\H$
  is symmetric), and $\Delta\H$ is an $\mathcal O$-sequence for
  $d\le\lfloor \frac{\ell(H)}{2}\rfloor$ (that is, the first half of
  $\H$ is a differentiable $\mathcal O$-sequence).
\end{definition}

It is a result of Harima \cite{Harima} that given an SI-sequence $\H$,
there is a Artinian Gorenstein $k$-algebra with Hilbert function $\H$. In fact,
more is true.

\begin{definition}\label{d:WLP}
  Let $M$ be a finite length graded $R$-module, with Hilbert function
  $\H$. Then $M$ is said to have the weak Lefschetz property if
  $\Delta\H(i)\ge 0$ for all $i\le \lfloor\frac{\ell(\H)}{2}\rfloor$,
  $\Delta\H(i)\le 0$ for all $i> \lfloor\frac{\ell(\H)}{2}\rfloor$
  (that is, $\H$ is unimodal), and there is $r\in R_1$ such that
  $M_i\to M_{i+1}$ given by multiplication by $r$ is injective or
  surjective for all $i = 0, \dots, \ell(\H)$.
\end{definition}

Harima actually proved that $\H$ is an SI-sequence if and only if
there is a Artinian Gorenstein $k$-algebra with the weak Lefschetz property and
Hilbert function $\H$.

\begin{remark}
  Given an SI-sequence $\H$, we will abuse notation slightly by saying
  that an $R$-module $M$ has Hilbert function $\Delta H$ when we
  really mean that its Hilbert function equals $\Delta H$ for
  $d\le \lfloor\frac{\ell(\H)}{2}\rfloor$, and is zero otherwise.
\end{remark}

The Gorenstein question was first considered in this context by
Geramita, Harima, and Shin \cite{GHS:AM} in 2000. They demonstrated
how to embed a standard $k$-configuration $\mathbb X$ (an iteratively
defined sets of points in ${\mathbb P}^n$) in a so-called basic
configuration ${\mathbb Z}$ (an intersection of unions of
hyperplanes), and then showed that the quotient of the sum of the
defining ideals of ${\mathbb X}$ and ${\mathbb Z}-{\mathbb X}$ is
Gorenstein with the weak Lefschetz property and that its Betti diagram
is larger than that of any other Artinian Gorenstein $k$-algebra with the weak
Lefschetz property and the same Hilbert function. Of course, this only
settled the question (granting the weak Lefschetz condition) for
Hilbert functions which can be obtained via the given construction,
and one can show that this does not include every possible
SI-sequence.

In 2003, Migliore and Nagel \cite{MiglioreNagel:Gorensteins} extended
Geramita, Harima, and Shin's result by showing that for {\it any}
SI-sequence $\H$, the collection of Betti diagrams for Artinian
Gorenstein $k$-algebras with the weak Lefschetz property and Hilbert
function $\H$ has a unique largest element. This is done in two
steps---giving an upper bound for the Betti diagrams in question, and
showing that this upper bound is sharp.  Establishing the upper bound,
which we record here for use later, turns out to be the easier step.

\begin{theorem}[Theorem 8.13 in
  \cite{MiglioreNagel:Gorensteins}]\label{t:MiglioreNagel}
  Let $S=k[\mu_1, \dots, \mu_c]$ for $k$ a field, $\H$ be an
  SI-sequence with $\H(1)\ge 1$, $c=\H(1)$, $t=\ell(\Delta\H)$, $L$ be
  the lex ideal in $S/(x_\mu)$ with Hilbert function $\Delta \H$, and
  $I\subseteq S$ be a homogeneous ideal such that $S/I$ is a
  Artinian Gorenstein $k$-algebra with the weak Lefschetz property and
  $H(S/I)=\H$. Then $$\beta^I_{i,i+j}\le
  \begin{cases} \beta^L_{i,i+j} & \mbox{if }j\le \ell(\H)-t-1\\
    \beta^L_{i,i+j} +\beta^L_{c-i,c-i+\ell(\H)-j} & \mbox{if }
    \ell(\H)-t\le j \le t\\ \beta^L_{c-i,c-i+\ell(\H)-j} & \mbox{if }
    j \ge t+1.\\ \end{cases}$$
\end{theorem}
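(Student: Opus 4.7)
The plan is to combine the Bigatti--Hulett--Pardue (BHP) bound for an Artinian reduction of $A:=S/I$ with the Gorenstein self-duality of its minimal free resolution. The weak Lefschetz property supplies a linear form $x\in S_1$ such that multiplication by $x$ on $A$ has maximal rank in each degree; set $\bar S:=S/(x)$, a polynomial ring in $c-1$ variables, and $\bar A:=A/xA$. By construction $\bar A$ has Hilbert function $\Delta\H$, supported in degrees $0,\dots,t$, so applying BHP to the lex ideal $L\subset \bar S$ with $H(\bar S/L)=\Delta\H$ yields $\beta^{\bar I}_{i,i+j}\le \beta^L_{i,i+j}$.

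To obtain case one of the theorem ($j\le\ell(\H)-t-1$), I would transfer this BHP bound from $\bar A$ to $A$ via a mapping cone that exploits the injective portion of $\cdot x$. In the degrees below the peak, the sequence $0\to A(-1)\xrightarrow{x}A\to\bar A\to 0$ is exact; because $x$ kills $\operatorname{Tor}^S(k,-)$, the induced long exact sequence degenerates into
\[
0\to \operatorname{Tor}^S_i(k,A)_{i+j}\to \operatorname{Tor}^S_i(k,\bar A)_{i+j}\to \operatorname{Tor}^S_{i-1}(k,A)_{i+j-1}\to 0.
\]
A change-of-rings comparison between $\operatorname{Tor}^S(k,\bar A)$ and $\operatorname{Tor}^{\bar S}(k,\bar A)$, combined with the BHP bound and an induction on $i$, should produce $\beta^I_{i,i+j}\le \beta^L_{i,i+j}$ throughout case one's range.

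Case three ($j\ge t+1$) then follows from the Gorenstein self-duality identity $\beta^I_{i,i+j}=\beta^I_{c-i,(c-i)+(\ell(\H)-j)}$: the dual index $\ell(\H)-j$ lies in case one's range, so applying case one to the dual position yields $\beta^I_{i,i+j}\le \beta^L_{c-i,(c-i)+(\ell(\H)-j)}$. For the middle range $\ell(\H)-t\le j\le t$, neither bound is individually sharp, and I would combine both analyses—using the injective half of $\cdot x$ to produce the $\beta^L_{i,i+j}$ contribution, and the surjective half, via Gorenstein duality, to produce the reflected $\beta^L_{c-i,(c-i)+(\ell(\H)-j)}$ contribution—to obtain the sum bound.

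The main obstacle I anticipate is this middle-range argument: since $x$ is not a nonzerodivisor on $A$, the two Tor sequences (one from the injective side, one from its Gorenstein-dualized counterpart on the surjective side) cannot simply be spliced, and care is required to ensure the mapping-cone estimate recovers precisely the clean sum on the right-hand side rather than something larger and more awkward. A secondary technical nuisance is matching the degree shifts introduced by the change-of-rings step from $S$ to $\bar S$ with those introduced by the Gorenstein-dual twists, so that the lex Betti numbers land in exactly the indicated homological/internal degrees.
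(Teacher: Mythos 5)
The statement you are proving is not actually proved in this paper: it is quoted verbatim as Theorem 8.13 of Migliore and Nagel and recorded only for later use (the paper's contribution is the sharpness of the bound, not the bound itself). So the relevant comparison is with Migliore--Nagel's original argument, and your outline does follow their strategy in spirit: pass to $\bar A=A/xA$ with Hilbert function $\Delta\H$, apply Bigatti--Hulett--Pardue over $\bar S=S/(x)$, and use Gorenstein self-duality $\beta^I_{i,i+j}=\beta^I_{c-i,(c-i)+(\ell(\H)-j)}$ (which is correct, and your reduction of the third range to the first via this identity is clean) to produce the reflected term.

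However, as written the argument has genuine gaps, not just technical nuisances. First, the sequence $0\to A(-1)\xrightarrow{x}A\to\bar A\to 0$ is never exact: $A$ is Artinian, so $x$ is a zerodivisor and the kernel is $(0:_A x)(-1)$, which by Gorenstein duality is $\bar A^{\vee}(-\ell(\H)-1)$ and is concentrated in degrees $\ge \ell(\H)-t+1$. ``Exact in low degrees'' does not by itself license the degeneration of the long exact sequence of Tor; one must instead work with the two short exact sequences built from $(0:_A x)$ and $A/(0:_A x)$, and it is precisely the dual module $(0:_A x)\cong\bar A^{\vee}(-\ell(\H))$ that produces the reflected Betti numbers $\beta^L_{c-i,c-i+\ell(\H)-j}$ -- your plan never makes this identification, which is the engine of the middle and third ranges. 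Second, even in the first range the step you describe does not yet give the stated bound: the inclusion $\operatorname{Tor}^S_i(k,A)_{i+j}\hookrightarrow \operatorname{Tor}^S_i(k,\bar A)_{i+j}$ only yields $\beta^I_{i,i+j}\le\beta^L_{i,i+j}+\beta^L_{i-1,(i-1)+j}$, because the change of rings gives $\operatorname{Tor}^S_i(k,\bar A)\cong\operatorname{Tor}^{\bar S}_i(k,\bar A)\oplus\operatorname{Tor}^{\bar S}_{i-1}(k,\bar A)(-1)$; eliminating the extra summand requires also controlling the connecting maps and running the induction on $i$ that you only gesture at. Third, the middle range $\ell(\H)-t\le j\le t$, where both contributions occur, is left open by your own admission, and that is exactly where the bookkeeping with $(0:_A x)$ and the regularity bound $\operatorname{reg}(\bar S/L)\le t$ must be combined. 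So the proposal identifies the right ingredients but does not yet constitute a proof; to complete it you would essentially have to reconstruct Migliore--Nagel's argument (or simply cite their Theorem 8.13, as this paper does).
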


Giving the result with respect to $(i,i+j)$ makes it straight forward
to interpret the bound in terms of Betti diagrams. In particular,
Migliore and Nagel's theorem says that taking two copies of the Betti
diagram of the lex ideal with Hilbert function $\Delta \H$, rotating
the second by 180 degrees and degree shifting appropriately, then
super-imposing these tables and adding entries gives an upper bound
for the Betti diagram of any Artinian Gorenstein $k$-algebra with the weak
Lefschetz property and Hilbert function $\H$.

Demonstrating that this bound is sharp turned out to be more
difficult.  Beginning with an arbitrary SI-sequence $\H$, the authors
define a special generalized stick figure (that is, a union of linear
subvarieties of ${\mathbb P}^n$ of the same dimension $d$ such that
the intersection of any three components has dimension $\le d-2$)
whose Hilbert function is $\Delta\H$, and embed this set in a second
generalized stick figure with a Gorenstein coordinate ring and whose
Hilbert function has a certain maximal property. The sum of the
defining ideal of the original space with its link in the manufactured
Gorenstein ideal gives a Gorenstein quotient with the weak Lefschetz
property, the correct Hilbert function, and maximal graded Betti
numbers.

In this paper, we give a new proof that Migliore and Nagel's bound is
sharp. We do this by way of monomial ideals and a doubly iterative
procedure, making the description more compact and the overall proof
shorter and more naive (in the sense that it relies mostly on double
induction). Furthermore, because the construction is monomial until
the last possible moment, it is easy to actually compute the ideals in
question, for instance on the computer algebra system Macaulay 2, even
for `large' $\H$. In fact, it was mostly an attempt to %understand and
compute examples of Migliore and Nagel's ideals which led to this
alternative proof. Admittedly, the economy and computability obtained
by our approach comes at a steep cost, because the important geometric
intuition and insight inherent in the generalized stick figures used
in Migliore and Nagel's work is lost in our machinery.

By section, our procedure will be as follows.  In \S
\ref{s:buildingblock}, we introduce the main building block of our
construction---given a Hilbert function $\H$ with $\ell(\H)<\infty$,
$c\ge \H(1)$, and $t\ge \ell(\H)$, we iteratively define a squarefree
monomial ideal $I_{c,t}(\H)$ via decomposition of $\H$ into two
${\mathcal O}$-sequences.  In \S \ref{s:properties}, we show that the
quotient of $I_{c,t}(\H)$ (in the appropriate polynomial ring) is
Cohen-Macaulay, and compute its dimension, Hilbert function, and
graded Betti numbers.  In \S \ref{s:subideals}, we consider a special
case of our procedure for manufacturing the $I_{c,t}(\H)$, and thereby
construct a family of Gorenstein ideals
$G_{c,t,s}\subseteq I_{c,t}(\H)$ for $s\ge 0$.  Finally in \S
\ref{s:gorenstein}, given an SI-sequence $\H$ we form a Gorenstein
ideal $J_c(\H)$ that has an Artinian reduction with the weak Lefschetz
property, Hilbert function $\H$, and extremal graded Betti numbers.
For $\H(1)\ge 2$, the procedure is to let $c=\H(1)$,
$t=\ell(\Delta\H)$, $s=\ell(\H)-2t+1$, and then sum
$I_{c-1,t}(\Delta\H)$ and its link with respect to $G_{c-1,t,s}$.

\section{The main building block}\label{s:buildingblock}
At the heart of our construction is the decomposition of a Hilbert
function into two $\mathcal O$-sequences as follows.

\begin{definition}\label{d:HF}
  Given an $\mathcal O$-sequence $\H$, let $c\ge \max\{1,\H(1)\}$,
  $T=k[\mu_1, \dots , \mu_c]$ for $k$ a field, and $L\subseteq T$ be
  the lex ideal attaining $\H$. We define $\H_{c}$ to be the
  $\mathcal O$-sequence $\H_c=H(T/(L+\mu_1))$, and $\H^c$ to be the
  $\mathcal O$-sequence $\H^c=H(T/(L:\mu_1))$.
\end{definition}

Given two $\mathcal O$-sequences $\H'$ and $\H$, if we write $\H'\le
\H$ to indicate that $\H'(d)\le \H(d)$ for all $d\in \N$, then it is
easy to see that $\H'\le\H$ implies $\H'_{c}\le \H_{c}$ and
$\H'^{c}\le
\H^{c}$. % (writing $L'$ and $L$ to be the corresponding lex ideals,
         % it is immediate that $L'\supseteq L$).
A simple definition chase demonstrates that $(\H^c)_{c}\le
(\H_{c})^{c-1}$ (where the latter term makes sense since $\H_c(1)\le
c-1$).

\begin{lemma}\label{l:HFLemma}
  Suppose that $c\ge 2$ 
  and $\H$ is an $\mathcal O$-sequence with $H(1)\le c$.  Then
  $(\H^c)_{c}\le (\H_{c})^{c-1}$.
\end{lemma}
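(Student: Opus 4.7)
The plan is to reinterpret both sides as counts of monomials measured against the single lex ideal $L\subseteq T=k[\mu_1,\dots,\mu_c]$ attaining $\H$, and then exhibit the set counted by the left-hand side as a subset of the set counted by the right-hand side via the lex order.

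First I would record the (standard) observation that lex ideals are well behaved under the two operations in play: $L:\mu_1$ is itself lex in $T$ (since multiplying by the top variable $\mu_1$ preserves lex order on monomials of a fixed degree), and the image $L_0:=(L+\mu_1)/(\mu_1)\subseteq k[\mu_2,\dots,\mu_c]$ is lex in $c-1$ variables. By Macaulay's uniqueness, $L:\mu_1$ is \emph{the} lex ideal attaining $\H^c$ and $L_0$ is \emph{the} lex ideal attaining $\H_c$, so no re-extraction of lex ideals is needed when iterating.

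Next I would unpack both Hilbert functions in a fixed degree $d$. For $(\H^c)_c(d)$, the quotient by $(\mu_1)$ restricts attention to monomials $m\in k[\mu_2,\dots,\mu_c]_d$, and such an $m$ survives modulo $L:\mu_1$ iff $\mu_1 m\notin L$. For $(\H_c)^{c-1}(d)$, after the relabeling that sends the ``top'' variable of the $(c-1)$-variable ring to $\mu_2$, an $m\in k[\mu_2,\dots,\mu_c]_d$ survives modulo $L_0:\mu_2$ iff $\mu_2 m\notin L$. Thus
\[
(\H^c)_c(d)=\bigl|\{m\in k[\mu_2,\dots,\mu_c]_d:\mu_1 m\notin L\}\bigr|,\qquad (\H_c)^{c-1}(d)=\bigl|\{m\in k[\mu_2,\dots,\mu_c]_d:\mu_2 m\notin L\}\bigr|.
\]

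Finally, the desired inequality follows from containment of the first set in the second: if $m\in k[\mu_2,\dots,\mu_c]_d$ then $\mu_1 m>_{\mathrm{lex}}\mu_2 m$, so the lex property of $L$ forces $\mu_2 m\in L\Rightarrow \mu_1 m\in L$; contrapositively, $\mu_1 m\notin L$ implies $\mu_2 m\notin L$. The only real obstacle is the bookkeeping around which variable plays the role of the leading indeterminate after passing to $k[\mu_2,\dots,\mu_c]$; once that is sorted out the argument is a one-line consequence of the lex order.
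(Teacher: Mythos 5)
Your proof is correct and takes essentially the same route as the paper's: both identify $(\H^c)_{c}$ with $H(T/((L:\mu_1)+\mu_1))$ and $(\H_{c})^{c-1}$ with the quotient by the colon of the truncated lex ideal by its new top variable, and both hinge on the same lex-order step, namely that for $m\in k[\mu_2,\dots,\mu_c]$, $\mu_2 m\in L$ forces $\mu_1 m\in L$. The only difference is presentational: you compare monomial counts directly, while the paper packages exactly the same containment as the well-definedness of a surjection $T'/(L':\lambda_1)\to T/((L:\mu_1)+\mu_1)$.
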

\begin{proof}
  Let $L$ be the lex ideal in $T=k[\mu_1, \dots, \mu_c]$ attaining
  $\H$. It is easy to see that $(L:\mu_1)$ is lex, so
  that $$(\H^{c})_{c}= (H(T/(L:\mu_1)))_{c} =
  H(T/((L:\mu_1)+\mu_1)).$$ On the other hand, $T/(L+\mu_1) \cong
  T'/L'$ for $T'=k[\lambda_1, \dots , \lambda_{c-1}]$ and $L'$ the
  (obviously lex) preimage of $L\cap k[\mu_2, \dots, \mu_c]$ under the
  map $\phi:T'\to T$ by $\lambda_i\to \mu_{i+1}$.
  So $$(\H_{c})^{c-1}= (H(T/L+\mu_1))^{c-1} = H(T'/(L':\lambda_1))$$
  and it is enough to show that there is a surjective homomorphism
  $T'/(L':\lambda_1)\to T/((L:\mu_1)+\mu_1)$. Obviously $\phi$ induces
  a surjection from $T'$ to $T/((L:\mu_1)+\mu_1)$, so it is enough to
  show that if $m$ is a monomial in $(L':\lambda_1)$, then $\phi(m)\in
  (L:\mu_1)$. But $m\lambda_1\in L' \implies
  \phi(m\lambda_1)=\phi(m)\mu_2\in L\cap k[\mu_2, \dots ,
  \mu_c]\subseteq L$, so $\phi(m)\mu_1\in L$ (since $L$ is lex) and
  hence $\phi(m)\in (L:\mu_1)$.
\end{proof}

From the short exact sequence
$$0\to \frac{T}{L:\mu_1}(-1) \xrightarrow{\mu_1} \frac{T}{L}\to
\frac{T}{L+\mu_1}\to 0$$
it is evident that $\H(d)=\H_c(d)+\H^c(d-1)$ for all $d\in \N$, and we
have already observed that $\H_c(1)\le c-1$. These observations
provide the minor justification required to define the central object
in our construction.

\begin{definition}\label{d:IctH}
  For $c,s\in \N_{\ge 0}$ and $t\in \Z_{\ge -1}$, let
 $$R_{c,t,s}=k[x_0,
 \dots, x_{t+\lfloor\frac{c-1}{2}\rfloor}, y_0, \dots,
 y_{t+\lfloor\frac{c-2}{2}\rfloor}, z_0, \dots, z_{s-1}].$$

 \noindent
 Then given an $\mathcal O$-sequence $\H$ with $\ell(\H)<\infty$,
 $c\ge \H(1)$, and $t\ge \ell(\H)$ we define the ideal
 $I_{c,t}(\H)\subseteq R_{c,t,0}$ iteratively as follows: \newline

\noindent
if $\H=0$, we let $I_{c,t}(\H)= R_{c,t,0}$ (and thus, if $t=-1$,
$I_{c,-1}(\H)=R_{c,-1,0}$ since $t\ge \ell(\H)$); \newline

\noindent if $\H\ne 0$ and $c=0$ we let $I_{0,t}(\H)=0\subseteq R_{0,t,0}$;
\newline

\noindent if $\H\ne 0$, $c>0$, and $t>-1$, we define
  $$I_{c,t}(\H)=I_{c-1,t}(\H_{c})R_{c,t,0}+\omega_{c,t}I_{c,t-1}
  (\H^{c})R_{c,t,0},$$ where
  $\omega_{c,t}=x_{t+\lfloor\frac{c-1}{2}\rfloor}$ if $c$ is odd and
  $y_{t+\lfloor\frac{c-2}{2}\rfloor}$ if $c$ is even.
%  if $t=0$ we define $I_{c,0}(\H)= (x_0, \dots,
%  x_{\lfloor\frac{c-1}{2}\rfloor}, y_0, \dots,
%  y_{\lfloor\frac{c-2}{2}\rfloor}) $; and iteratively,
 \end{definition}

\begin{remark} 
  We will suppress the obvious natural inclusions $$R_{c-1,t,s},
  R_{c,t-1,s}, R_{c,t,s-1}\subseteq R_{c,t,s},$$ and hence will
  write $$I_{c,t}(\H)=I_{c-1,t}(\H_{c})+
  \omega_{c,t}I_{c,t-1}(\H^{c})$$ in cases for which there can be no
  confusion. Whenever possible, we will write $R$ for $R_{c,t,s}$, for
  example $R/I_{c,t}(\H)$ for $R_{c,t,s}/I_{c,t}(\H)$. Since $s=0$ for
  most of the initial construction, we will will usually write
  $R_{c,t}$ for $R_{c,t,0}$ when more precise information about the
  ambient ring is required.
\end{remark}

%
%\begin{example}
%  Consider for instance the $\mathcal O$-sequence $\H=1-2-3$. In the
%  case that $c=2$, the lex ideal attaining $\H$ in $T=[\mu_1, \mu_2]$
%  is $L=(\mu_1^3, \mu_1^2\mu_2, \mu_1\mu_2^2, \mu_2^3),$ $L+(\mu_1)
%  =(\mu_1, \mu_2^3),$ $(L:\mu_1)=(\mu_1^2, \mu_1\mu_2, \mu_2^2),$
%  $\H_{2}=1-1-1,$ and $\H^2= 1-2$.%%
%
%  In the case that $c=3$, the lex ideal attaining $\H$ in $T=[\mu_1,
%  \mu_2,\mu_3]$ is $L=(\mu_1, \mu_2^3, \mu_2^2\mu_3, \mu_2\mu_3^2,
%  \mu_3^3),$ $L+(\mu_1) =L,$ $L:\mu_1=T,$ $\H_{3}=1-2-3,$ and
%  $\H^{3}=0$.
%\end{example}

\begin{example}\label{e:IctH}
  Consider the Hilbert function $\H=\{1,2\}$. Then
  \begin{eqnarray*}
    I_{3,2}(\{1,2\})&=& I_{2,2}(\{1,2\})+\omega_{3,2} I_{3,1}(\{0\}) = (x_1x_2,x_1y_2,y_1y_2,x_3)\\
    \\
    I_{2,2}(\{1,2\})&=& I_{1,2}(\{1,1\})+\omega_{2,2} I_{2,1}(\{1\}) =
    (x_1x_2, x_1y_2,y_1y_2)\\
    I_{3,1}(\{0\})&=& R_{3,1} \\
    \\
    I_{1,2}(\{1,1\})&=& I_{0,2}(\{1\})+\omega_{1,2} I_{1,1}(\{1\}) = (x_1x_2)\\
    I_{2,1}(\{1\})&=& I_{1,1}(\{1\})+\omega_{2,1} I_{2,0}(\{0\}) = (x_1,y_1)\\
    \\
    I_{0,2}(\{1\})&=& 0\\
    I_{1,1}(\{1\})&=&  I_{0,1}(\{1\})+\omega_{1,1} I_{1,0}(\{0\}) = (x_1)\\
    I_{2,0}(\{0\})&=& R_{2,0}\\
    \\
    I_{0,1}(\{1\})&=& 0\\
    I_{1,0}(\{0\})&=&  R_{1,0}\\
\end{eqnarray*}
One can easily show that for $\H\ne 0$ with $\H(1)\le 1$ and
$t\ge \ell(\H)$, $I_{1,t}(\H)=(x_0\cdots x_t)$.
\end{example}

The $I_{c,t}(\H)$ are relatively easy to manipulate inductively. To
illustrate this (and for use later), we record here the following
observations.

\begin{proposition}\label{p:sqfree}
  Let $\H$ be an $\mathcal O$-sequence with $\ell(\H)<\infty$,
  $c\ge \H(1)$, and $t\ge \ell(\H)$. Then $I_{c,t}(\H)$ is a
  squarefree monomial ideal.
\end{proposition}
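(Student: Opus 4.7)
The plan is to prove the statement by induction on the pair $(c,t)$ ordered by $c+t$ (or, equivalently, by nested induction on $c$ and then $t$), since the recursive definition of $I_{c,t}(\H)$ reduces exactly one of the two parameters at each step. The two base cases are explicit: if $\H=0$ then $I_{c,t}(\H)=R_{c,t,0}$, the whole ring, which is the monomial ideal generated by $1$; and if $c=0$ with $\H\neq 0$ then $I_{0,t}(\H)=0$. Both are squarefree monomial ideals in a trivial way.

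For the inductive step, with $\H\neq 0$, $c>0$, and $t>-1$, I would first check that the two recursive invocations on the right-hand side of
$$I_{c,t}(\H)=I_{c-1,t}(\H_{c})+\omega_{c,t}\,I_{c,t-1}(\H^{c})$$
satisfy the preconditions of Definition \ref{d:IctH}. For $I_{c-1,t}(\H_c)$ one needs $c-1\ge \H_c(1)$, which holds because $\H_c=H(T/(L+\mu_1))$ is a Hilbert function of a quotient of a polynomial ring in $c-1$ variables, and $t\ge \ell(\H_c)$, which is immediate from $\ell(\H_c)\le \ell(\H)\le t$. For $I_{c,t-1}(\H^c)$ one needs $c\ge \H^c(1)$ and $t-1\ge \ell(\H^c)$; both follow from the short exact sequence preceding Definition \ref{d:IctH} (which shows in particular $\ell(\H^c)\le \ell(\H)-1\le t-1$). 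By the inductive hypothesis, both $I_{c-1,t}(\H_c)$ and $I_{c,t-1}(\H^c)$ are squarefree monomial ideals.

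The decisive step is to verify that multiplying $I_{c,t-1}(\H^c)$ by $\omega_{c,t}$ preserves squarefreeness. For this it suffices to observe that $\omega_{c,t}$ is a variable which does not occur in the ambient ring $R_{c,t-1,0}$ of $I_{c,t-1}(\H^c)$: when $c$ is odd, $\omega_{c,t}=x_{t+\lfloor (c-1)/2\rfloor}$ has $x$-index strictly larger than the top $x$-index $t-1+\lfloor (c-1)/2\rfloor$ of $R_{c,t-1,0}$, and analogously when $c$ is even for the $y$-variable. Hence $\omega_{c,t}$ is coprime to every generator of $I_{c,t-1}(\H^c)$, so $\omega_{c,t}\,I_{c,t-1}(\H^c)$ is still squarefree in $R_{c,t,0}$. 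A parallel index check shows that $\omega_{c,t}$ also lies outside $R_{c-1,t,0}$, but this is not strictly needed: the sum of two squarefree monomial ideals (viewed in the common overring $R_{c,t,0}$) is again a squarefree monomial ideal, and this completes the induction.

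The main obstacle, if any, is really just the bookkeeping involved in tracking the index ranges of the $x$- and $y$-variables in the nested ambient rings $R_{c-1,t,0}$, $R_{c,t-1,0}$, $R_{c,t,0}$, so that the claim ``$\omega_{c,t}$ is a brand-new variable'' can be made precise in each parity case of $c$. Once that is pinned down, the rest of the argument is a mechanical unwinding of the recursion.
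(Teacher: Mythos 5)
Your proposal is correct and follows essentially the same argument as the paper: induction on $c$ and $t$, with the key point that $\omega_{c,t}$ is a variable lying outside $R_{c,t-1}$, so multiplying the (inductively squarefree) ideal $I_{c,t-1}(\H^c)$ by it preserves squarefreeness, and the sum of squarefree monomial ideals is squarefree. Your additional checks of the recursion's preconditions are fine but are already covered by the observations preceding Definition \ref{d:IctH}.
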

\begin{proof}
  The result is obvious if $\H=0$, $c=0$, or $t=-1$,
%$$I_{c,0}(\H)= (x_0, \dots ,
%  x_{\lfloor\frac{c-1}{2}\rfloor}, y_0, \dots,
%  y_{\lfloor\frac{c-2}{2}\rfloor}),$$ or $c=1$,
%  $$I_{1,t}=(x_{t-\ell(\H)}\cdots x_t),$$ 
  so suppose that $\H\ne 0$ and $c>0$, $t>-1$ and let $m\in
  I_{c,t}(\H)$ be a minimal generator. But $m\in
  I_{c-1,t}(\H_{c})+\omega_{c,t} I_{c,t-1}(\H^{c})$ and each of
  $I_{c-1,t}(\H_{c})$ and $I_{c,t-1}(\H^{c})$ are squarefree by the
  induction hypothesis. The result then follows because
  $I_{c,t-1}\subset R_{c,t-1}$ but $\omega_{c,t}\not\in R_{c,t-1}$ by
  construction.
\end{proof}

\begin{proposition}\label{p:IctHlinears}
  Let $\H$ be an $\mathcal O$-sequence with $-1<\ell(\H)<\infty$,
  $c\ge \H(1)$, and $t\ge \ell(\H)$.  Then $I_{c,t}(\H)$ contains
  exactly $c-\H(1)$ minimal linear generators.
\end{proposition}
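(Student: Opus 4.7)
The plan is to induct on $c$. The base case $c=0$ is immediate: since $\H(1)\le c=0$, we get $\H(1)=0$, and by Definition \ref{d:IctH}, $I_{0,t}(\H)=0$ has no linear generators, matching $c-\H(1)=0$.

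For $c\ge 1$, the recursive definition
$$I_{c,t}(\H)=I_{c-1,t}(\H_{c})+\omega_{c,t}I_{c,t-1}(\H^{c})$$
splits the analysis naturally. The first summand lies in $R_{c-1,t}$, which does not contain the variable $\omega_{c,t}$, while every element of the second summand has $\omega_{c,t}$ as a factor. So a minimal linear generator of the sum is either (i) a minimal linear generator of $I_{c-1,t}(\H_{c})$, or (ii) the variable $\omega_{c,t}$ itself, and case (ii) occurs precisely when $I_{c,t-1}(\H^{c})$ contains a unit---which by Definition \ref{d:IctH} happens if and only if $\H^{c}=0$. The two contributions are independent (no linear generator of $I_{c-1,t}(\H_{c})$ can equal $\omega_{c,t}$, since the former lies in $R_{c-1,t}$), so the counts add.

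The inductive hypothesis applies to $I_{c-1,t}(\H_{c})$ because $\H\ne 0$ forces $\H_{c}(0)=1$, giving $0\le \ell(\H_{c})\le \ell(\H)\le t$, while $\H_{c}(1)\le c-1$ holds since $T/(L+\mu_1)$ is a quotient of $k[\mu_2,\dots,\mu_c]$. So $I_{c-1,t}(\H_{c})$ contributes exactly $(c-1)-\H_{c}(1)$ linear generators. The short exact sequence preceding Definition \ref{d:IctH} yields the additive identity $\H(1)=\H_{c}(1)+\H^{c}(0)$. If $\H^{c}\ne 0$, then $\H^{c}(0)=1$, so $\H_{c}(1)=\H(1)-1$ and the total count is $(c-1)-(\H(1)-1)+0=c-\H(1)$. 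If $\H^{c}=0$, then $\H^{c}(0)=0$, so $\H_{c}(1)=\H(1)$ and the total is $(c-1)-\H(1)+1=c-\H(1)$. Either way the count equals $c-\H(1)$, closing the induction.

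The argument is essentially bookkeeping; the only point that really needs care is recognizing that $\omega_{c,t}$ is itself a minimal generator precisely when $\H^{c}=0$, and then checking that this single extra generator exactly compensates for the drop from $\H_{c}(1)=\H(1)-1$ to $\H_{c}(1)=\H(1)$ between the two cases.
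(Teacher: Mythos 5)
Your proof is correct and takes essentially the same approach as the paper: induct on $c$, count the minimal linear generators of $I_{c-1,t}(\H_c)$ by the inductive hypothesis, and observe that $\omega_{c,t}I_{c,t-1}(\H^c)$ contributes the single extra linear generator $\omega_{c,t}$ precisely when $\H^c=0$. The only cosmetic difference is that you derive the case split $\H_c(1)=\H(1)-1$ versus $\H_c(1)=\H(1)$ from the identity $\H(1)=\H_c(1)+\H^c(0)$ coming from the short exact sequence, whereas the paper reads it off from whether $\mu_1\in L$.
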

\begin{proof}
  This is obvious if $c=0$. If $c>1$, then there are two
  possibilities. If $\H^c\ne 0$, then (using the notation from
  Definition (\ref{d:HF})) $\mu_1\not\in L$ so $\H_c(1)=\H(1)-1$. By
  induction $I_{c-1,t}(\H_c)$ contains exactly $c-1-\H_c(1)=c-\H(1)$
  linear generators and $\omega_{c,t}I_{c,t-1}(\H^c)$ is generated in
  degrees $\ge 2$ (the latter point follows from the definition
  because $I_{c,t-1}(\H^c)= R_{c,t-1}$ if and only if $\H^c=0$).
  Since $I_{c,t}(\H)=I_{c-1,t}(\H_c)+\omega_{c,t}I_{c,t-1}(\H^c)$, we
  are finished. If $\H^c=0$, then $\mu_1\in L$, so $\H_c(1)=\H(1)$. By
  induction $I_{c-1,t}(\H_c)$ contains exactly $c-1-\H_c(1)=c-1-\H(1)$
  linear generators, so
  $I_{c,t}(\H)=I_{c-1,t}(\H_c)+\omega_{c,t}R_{c,t}$ contains exactly
  $c-\H(1)$ linear generators as required.
\end{proof}
 
\begin{proposition}\label{p:HFcomparison}
  Let $\H'\le \H$ be $\mathcal O$-sequences with $\ell(\H)<\infty$,
  $c\ge \H(1)$, and $t\ge \ell(\H)$.  Then
  $I_{c,t}(\H')\supseteq I_{c,t}(\H)$.
\end{proposition}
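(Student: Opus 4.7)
The plan is a straightforward double induction on $(c,t)$ following the recursive structure of Definition \ref{d:IctH}. The essential observation, already recorded in the paragraph after Definition \ref{d:HF}, is that $\H' \le \H$ implies both $\H'_c \le \H_c$ and $\H'^c \le \H^c$. This monotonicity is exactly what is needed to compare the two ideals summand by summand in the recursion.

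First I would dispatch the degenerate cases. If $\H=0$ then $\H'=0$ as well and both ideals equal the ambient polynomial ring. If $c=0$ and $\H \ne 0$, then $I_{0,t}(\H)=0$ is trivially contained in $I_{0,t}(\H')$. If $t=-1$, then the hypothesis $t \ge \ell(\H)$ forces $\H=0$, already handled. In the inductive step I may therefore assume $\H \ne 0$, $c \ge 1$, $t \ge 0$; and if $\H'=0$ in that step the statement is still immediate because $I_{c,t}(\H')=R_{c,t,0}$.

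For the main case, expand
$$I_{c,t}(\H') = I_{c-1,t}(\H'_c) + \omega_{c,t}\, I_{c,t-1}(\H'^c), \qquad I_{c,t}(\H) = I_{c-1,t}(\H_c) + \omega_{c,t}\, I_{c,t-1}(\H^c),$$
apply the induction hypothesis at parameters $(c-1,t)$ to $\H'_c \le \H_c$ and at $(c,t-1)$ to $\H'^c \le \H^c$, and add the resulting containments. A small amount of bookkeeping is required to confirm that the parameter bounds $c-1 \ge \H_c(1)$, $t \ge \ell(\H_c)$, $c \ge \H^c(1)$, and $t-1 \ge \ell(\H^c)$ all hold so the induction hypothesis is applicable; these fall out of the identity $\H(d)=\H_c(d)+\H^c(d-1)$ together with the observation $\H_c(1)\le c-1$, both recorded already in the excerpt.

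There is no real obstacle here; the proposition is a formal monotonicity statement that the iterative construction respects by design, and no new ideas beyond the comparisons immediately following Definitions \ref{d:HF} and \ref{d:IctH} are required.
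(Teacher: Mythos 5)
Your proposal is correct and is essentially the paper's own argument: the same double induction on $(c,t)$, using the monotonicity $\H'\le\H\implies \H'_c\le\H_c$ and $\H'^c\le\H^c$ noted before Lemma (\ref{l:HFLemma}) to compare the two summands of the recursive definition, with the same trivial base cases. The extra bookkeeping you mention is already implicit in the well-definedness of $I_{c,t}(\H)$, so nothing further is needed.
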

\begin{proof}
  We do induction on $c$ and $t$. The result if obvious if $t=-1$,
  $\H=0$, or $c=0$, so we suppose $c>0$, $t>-1$, and $\H\ne 0$.  Then
  $$I_{c,t}(\H')=I_{c-1,t}(\H'_{c})
  +\omega_{c,t}I_{c,t-1}(\H'^{c})$$
  $$\supseteq I_{c-1,t}(\H_{c}) + \omega_{c,t} I_{c,t-1}(\H^{c}) =
  I_{c,t}(\H)$$
  applying the induction hypothesis as well as the observations
  preceding Lemma (\ref{l:HFLemma}).
\end{proof}

It follows directly from the definition that
$I_{c-1,t}(\H_c)\subseteq I_{c,t}(\H)$. In fact, more is true.

\begin{proposition}\label{p:subsetcIversion}
  Let $\H$ be an $\mathcal O$-sequence, $c\ge \max\{1, \H(1)\}$, and
  $t\ge \max\{0,\ell(\H)\}$, then
  $I_{c-1,t}(\H_{c})R_{c,t}\subseteq I_{c,t-1}(\H^{c})R_{c,t}$.
\end{proposition}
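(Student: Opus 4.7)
The plan is to induct on $c$ and reduce the containment to the recursive splitting of Definition \ref{d:IctH} together with the monotonicity in Proposition \ref{p:HFcomparison}. Three cases can be dispatched immediately: if $\H = 0$ then both sides equal the unit ideal of $R_{c,t}$; if $c = 1$ and $\H \ne 0$, then $\H_1 \ne 0$ forces $I_{0,t}(\H_1) = 0$ by the second clause of Definition \ref{d:IctH}; and if $\H^c = 0$ then $I_{c,t-1}(\H^c) R_{c,t} = R_{c,t}$. So I may assume $c \ge 2$ and that both $\H$ and $\H^c$ are nonzero.

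For the inductive step, expand the left-hand side as
$$I_{c-1,t}(\H_c) = I_{c-2,t}((\H_c)_{c-1}) + \omega_{c-1,t} I_{c-1,t-1}((\H_c)^{c-1}),$$
and show each summand lies inside $I_{c,t-1}(\H^c) R_{c,t}$. The core ingredient is Lemma \ref{l:HFLemma}, which gives $(\H^c)_c \le (\H_c)^{c-1}$; combined with Proposition \ref{p:HFcomparison} applied at indices $(c-1,t-1)$ this yields $I_{c-1,t-1}((\H_c)^{c-1}) \subseteq I_{c-1,t-1}((\H^c)_c)$. Since the right side is in turn directly contained in $I_{c,t-1}(\H^c)$ by the recursive definition of the latter, the second summand is handled (the factor $\omega_{c-1,t}$ lives in $R_{c,t}$, so it preserves the extended ideal). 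For the first summand, I apply the inductive hypothesis with $(\H,c,t)$ replaced by $(\H_c, c-1, t)$ to get $I_{c-2,t}((\H_c)_{c-1}) \subseteq I_{c-1,t-1}((\H_c)^{c-1})$, after which the same two inclusions as above complete the argument.

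The only points that need verification are that the inductive hypothesis and Proposition \ref{p:HFcomparison} actually apply at the indicated parameters. For the inductive call one needs $c-1 \ge \max\{1,\H_c(1)\}$, which follows from $c \ge 2$ together with $\H_c(1) \le c-1$, and $t \ge \max\{0,\ell(\H_c)\}$, which follows from $\ell(\H_c) \le \ell(\H) \le t$. For Proposition \ref{p:HFcomparison} one needs $c-1$ to dominate the degree-one values of $(\H_c)^{c-1}$ and $(\H^c)_c$ (immediate, as these are $\mathcal O$-sequences of a ring in $c-1$ variables) and $t-1 \ge \ell((\H_c)^{c-1}), \ell((\H^c)_c)$, both of which follow from $\ell(\H^c) \le \ell(\H) - 1 \le t-1$ and the fact that the decoration operations on $\mathcal O$-sequences never raise $\ell$. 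These are all bookkeeping; the real substance is that Lemma \ref{l:HFLemma} is precisely the comparison needed to commute the two recursive steps in Definition \ref{d:IctH}.
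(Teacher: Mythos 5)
Your argument is correct and is essentially the paper's own proof: the same induction on $c$, the same expansion of $I_{c-1,t}(\H_c)$ via Definition \ref{d:IctH}, the inductive hypothesis applied to $(\H_c,c-1,t)$, and Lemma \ref{l:HFLemma} combined with Proposition \ref{p:HFcomparison} to pass from $(\H_c)^{c-1}$ to $(\H^c)_c$ and hence into $I_{c,t-1}(\H^c)$. The only differences are cosmetic (handling the two summands separately and adding the harmless extra base case $\H^c=0$), and your verification of the parameter bookkeeping is accurate.
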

\begin{proof} The result is obvious if $\H=0$, so presume not and
  proceed by induction on $c$.  If $c=1$ then $\H_1\ne 0$ and
  $I_{0,t}(\H_{1})R_{1,t} = (0)\subseteq I_{1,t-1}(\H^{1})R_{1,t}$
  trivially.

  Now we suppose that $c>1$. So
  $$I_{c-1,t}(\H_{c})R_{c,t}=
  I_{c-2,t}((\H_{c})_{c-1})R_{c,t}+\omega_{c-1,t}
  I_{c-1,t-1}((\H_{c})^{c-1})R_{c,t}$$
$$\subseteq I_{c-1,t-1}((\H_{c})^{c-1})R_{c,t}+\omega_{c-1,t}I_{c-1,t-1}
((\H_{c})^{c-1})R_{c,t}$$
$$\subseteq I_{c-1,t-1}((\H_{c})^{c-1})R_{c,t} \subseteq
I_{c-1,t-1}((\H^{c})_{c})R_{c,t}$$
$$ \subseteq I_{c,t-1}(\H^{c})R_{c,t},$$
as required where the first containment is by the induction hypothesis
and the third is by Lemma (\ref{l:HFLemma}) and Proposition
(\ref{p:HFcomparison}).
\end{proof}

The final result of this section gives some indication how we will
chase information about Hilbert functions and graded Betti numbers
through our iterative definition.

\begin{proposition}\label{p:intersection3}
  Let $\H$ be an $\mathcal O$-sequence with $\H(1)\ge 1$ and
  $\ell(\H)<\infty$, $c\ge \H(1)$, and
  $t\ge\min\{d\ |\ \H(d)\ge\H(d+1)\}$. Then we have a short exact
  sequence
  $$0\to \frac{R}{\omega_{c,t}I_{c-1,t}(\H_c)} \to
  \frac{R}{I_{c-1,t}(\H_c)}\oplus
  \frac{R}{\omega_{c,t}I_{c,t-1}(\H^c)} \to \frac{R}{I_{c,t}(\H)}\to
  0.$$
\end{proposition}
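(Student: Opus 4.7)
The plan is to recognize the desired sequence as the standard Mayer--Vietoris short exact sequence attached to a pair of ideals. Set $A=I_{c-1,t}(\H_c)$ and $B=\omega_{c,t}I_{c,t-1}(\H^c)$, viewed as ideals of $R$. By the defining recursion in Definition \ref{d:IctH} we have $A+B=I_{c,t}(\H)$, and for any two ideals the familiar sequence
$$0\to R/(A\cap B)\to R/A\oplus R/B\to R/(A+B)\to 0,$$
with maps $r\mapsto(r,r)$ and $(a,b)\mapsto a-b$, is exact. So the proof reduces to establishing the identification $A\cap B=\omega_{c,t}I_{c-1,t}(\H_c)$.

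One containment is essentially free. The inclusion $\omega_{c,t}I_{c-1,t}(\H_c)\subseteq A$ is trivial, while $\omega_{c,t}I_{c-1,t}(\H_c)\subseteq B$ follows from Proposition \ref{p:subsetcIversion}, which gives $I_{c-1,t}(\H_c)\subseteq I_{c,t-1}(\H^c)$ inside $R$; multiplying through by $\omega_{c,t}$ produces the desired inclusion into $B=\omega_{c,t}I_{c,t-1}(\H^c)$.

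The main (albeit modest) obstacle is the reverse containment $A\cap B\subseteq\omega_{c,t}I_{c-1,t}(\H_c)$. I would handle this by the observation that $\omega_{c,t}$ is a nonzerodivisor on $R/I_{c-1,t}(\H_c)$. Indeed, by Proposition \ref{p:sqfree} the ideal $I_{c-1,t}(\H_c)$ is squarefree monomial, and a direct inspection of the indexing in Definition \ref{d:IctH} shows that its minimal generators lie in the subring $R_{c-1,t}$, which does not contain the variable $\omega_{c,t}$. Consequently multiplication by $\omega_{c,t}$ is injective modulo $I_{c-1,t}(\H_c)$, so the colon ideal satisfies $(I_{c-1,t}(\H_c):\omega_{c,t})=I_{c-1,t}(\H_c)$. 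Any $f\in A\cap B$ has the form $f=\omega_{c,t}g$; the relation $\omega_{c,t}g=f\in I_{c-1,t}(\H_c)$ then forces $g\in I_{c-1,t}(\H_c)$, and therefore $f\in\omega_{c,t}I_{c-1,t}(\H_c)$. Combining this with the easy containment gives equality, and the Mayer--Vietoris sequence above yields the claim.
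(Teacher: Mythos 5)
Your proof is correct and follows essentially the same route as the paper: both reduce to showing $I_{c-1,t}(\H_c)\cap \omega_{c,t}I_{c,t-1}(\H^c)=\omega_{c,t}I_{c-1,t}(\H_c)$, getting one containment from Proposition (\ref{p:subsetcIversion}) and the other from the fact that $\omega_{c,t}$, not appearing in $R_{c-1,t}$, is a non-zero-divisor modulo $I_{c-1,t}(\H_c)$. The only cosmetic difference is that the paper argues on monomials while you phrase the same step via the colon ideal $(I_{c-1,t}(\H_c):\omega_{c,t})$.
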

\begin{proof}
  That
  $I_{c-1,t}(\H_c)\cap \omega_{c,t}I_{c,t-1}(\H^c)\supseteq
  \omega_{c,t}I_{c-1,t}(\H_c)$
  follows immediately from Proposition (\ref{p:subsetcIversion}), so
  we suppose that $m$ is a monomial in
  $I_{c-1,t}(\H_c)\cap \omega_{c,t} I_{c,t-1}(\H^c)$. Then
  $m\in I_{c-1,t}(\H_c)$ and $\omega_{c,t}\ |\ m$. But $\omega_{c,t}$
  is a non-zero-divisor on $R/I_{c-1,t}(\H_c)$ because
  $\omega_{c,t}\not\in R_{c-1,t}$, so
  $m\in \omega_{c,t}I_{c-1,t}(\H_c)$ as required.

 % Thus there is a
 %  minimal generator $m'\in I_{c-1,t}(\H_c)\subseteq R_{c-1,t}$ and
 %  $r\in R_{c,t}$ such that $rm'=m$. Since
 %  $m\in \omega_{c,t} I_{c,t-1}(\H^c)$ and
 %  $\omega_{c,t}\not\in R_{c-1,t}$, it follows that there is
 %  $r'\in R_{c,t}$ such that $r=r'\omega_{c,t}$. We conclude that
 %  $m=m'r'\omega_{c,t}\in \omega_{c,t} I_{c-1,t}(\H_c)$ as required.
%
%  Now suppose that $m\in \omega_{c,t} I_{c-1,t}(\H_c)R_{c,t}$. Since
%  $I_{c-1,t}(\H_c)R_{c,t}\subseteq I_{c,t-1}(\H^c)R_{c,t}$ by Lemma
%  (\ref{l:subsetcIversion}), it is immediate that $$m\in
%  I_{c-1,t}(\H_c)R_{c,t}\cap \omega_{c,t} I_{c,t-1}(\H^c)R_{c,t}$$
%  which completes the proof.
\end{proof}

\section{Properties of the $I_{c,t}(\H)$}\label{s:properties}

We now show that $R/I_{c,t}(\H)$ is Cohen-Macaulay and compute its
dimension, graded Betti numbers, and Hilbert function.

\begin{proposition}\label{p:minprimes}
  Let $\H$ be an $\mathcal O$-sequence with $-1<\ell(\H)<\infty$,
  $c\ge \H(1)$, and $t\ge \ell(\H)$. Then any minimal prime of
  $I_{c,t}(\H)$ is generated by $c$ variables, exactly
  $\lfloor\frac{c}{2}\rfloor$ of which are from
  $\{y_0, \dots, y_{t+\lfloor\frac{c-2}{2}\rfloor}\}$.
\end{proposition}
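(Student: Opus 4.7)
The plan is to induct on $c$. The base case $c=0$ is immediate: since $\ell(\H)>-1$ forces $\H\ne 0$, $I_{0,t}(\H)=(0)$ sits in $R_{0,t,0}=k$, whose unique minimal prime $(0)$ is trivially generated by $c=0$ variables with $\lfloor 0/2\rfloor=0$ of them $y$'s. For the inductive step with $c\ge 1$, I use the defining decomposition $I_{c,t}(\H)=I_{c-1,t}(\H_c)+\omega_{c,t}I_{c,t-1}(\H^c)$ together with the elementary fact that for a prime $P$, any ideal of the form $\omega J$ lies in $P$ if and only if $\omega\in P$ or $J\subseteq P$. This yields
$$V(I_{c,t}(\H))=V(I_{c-1,t}(\H_c)+(\omega_{c,t}))\cup V(I_{c-1,t}(\H_c)+I_{c,t-1}(\H^c)),$$
and Proposition \ref{p:subsetcIversion} simplifies the second term to $V(I_{c,t-1}(\H^c))$. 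Hence every minimal prime of $I_{c,t}(\H)$ is a minimal prime of one of these two ideals, and I only need verify the stated conclusion for each.

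For the first ideal, the inductive hypothesis applies to $I_{c-1,t}(\H_c)$ because $\H_c\ne 0$ (the lex ideal $L$ attaining $\H$ contains no nonzero constant, so $\H_c(0)=1$) and $c-1\ge \H_c(1)$, giving minimal primes on $c-1$ variables with $\lfloor(c-1)/2\rfloor$ of them $y$'s. Since $\omega_{c,t}$ is a variable of $R_{c,t,0}\setminus R_{c-1,t,0}$, adjoining it yields a prime on $c$ variables; a short parity check then delivers the correct $y$-count: if $c$ is odd, $\omega_{c,t}$ is an $x$ and $\lfloor(c-1)/2\rfloor=\lfloor c/2\rfloor$, while if $c$ is even, $\omega_{c,t}$ is a $y$ and $\lfloor(c-1)/2\rfloor+1=\lfloor c/2\rfloor$. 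For the second ideal, either $\H^c=0$, in which case $I_{c,t-1}(\H^c)$ extends to the unit ideal in $R$ and contributes no minimal primes, or $\H^c\ne 0$ and the inductive hypothesis applies directly, using $\ell(\H^c)\le \ell(\H)-1\le t-1$ and $c\ge \H^c(1)$, to deliver primes on $c$ variables with $\lfloor c/2\rfloor$ $y$'s.

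The main delicate point is confirming that the minimal primes of $I_{c-1,t}(\H_c)$ and $I_{c,t-1}(\H^c)$ in the ambient smaller polynomial subrings extend faithfully to minimal primes in $R_{c,t,0}$ with an unchanged generator count and $x/y$ classification. This holds because the additional variables of $R_{c,t,0}$ beyond the respective subring are non-zero-divisors on the quotient, so extension and contraction set up a bijection between the relevant primes. Modulo this bookkeeping, the entire argument is a straightforward double induction that unwinds the recursion of Definition \ref{d:IctH} one variable at a time.
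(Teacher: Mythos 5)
Your proof is correct and follows essentially the same route as the paper's: a double induction on $(c,t)$ that splits the minimal primes according to whether they contain $\omega_{c,t}$, adjoins $\omega_{c,t}$ to a minimal prime of $I_{c-1,t}(\H_c)$ with the same parity bookkeeping in the first case, and invokes Proposition (\ref{p:subsetcIversion}) to reduce the second case to $I_{c,t-1}(\H^c)$. The only cosmetic differences are your phrasing via the decomposition $V(A+\omega B)=V(A+(\omega))\cup V(A+B)$ and your omission of the paper's explicit $t=0$ base case, which your recursion indeed renders unnecessary since $\H^c=0$ whenever $\ell(\H)=0$.
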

\begin{proof}
  We do induction on $c$ and $t$. If $c=0$ then $I_{0,t}(\H)=(0)$ and
  the result is obvious. If $t=0$, then
  $I_{c,0}(\H)= (x_0, \dots, x_{\lfloor\frac{c-1}{2}\rfloor}, y_0,
  \dots, y_{\lfloor\frac{c-2}{2}\rfloor})$
  and again the result is clear.

  So suppose $c\ge 1$, $t\ge 1$, and $P$ is a minimal prime of
  $$I_{c,t}(\H)=I_{c-1,t}(\H_c)+\omega_{c,t}I_{c,t-1}(\H^c).$$ If
  $\omega_{c,t}\in P$, then the prime obtained by removing
  $\omega_{c,t}$ from $P$ is minimal over $I_{c-1,t}(\H_c)$ and
  hence consists of $c-1$ variables, $\lfloor\frac{c-1}{2}\rfloor$
  from $\{y_0, \dots, y_{t+\lfloor\frac{c-3}{2}\rfloor}\}$. If $c$ is
  even then $\omega_{c,t}=y_{t+\lfloor\frac{c-2}{2}\rfloor}$,
  $\lfloor\frac{c-1}{2}\rfloor = \lfloor\frac{c}{2}\rfloor-1$, and the
  result follows. If $c$ is odd, then
  $w_{c,t}=x_{t+\lfloor\frac{c-1}{2}\rfloor}$ and
  $\lfloor\frac{c-1}{2}\rfloor = \lfloor\frac{c}{2}\rfloor$ as
  required.

  If $\omega_{c,t}\not\in P$ then $\omega_{c,t}\not\in I_{c-1,t}(\H)$,
  so $I_{c,t-1}(\H^c)\ne R_{c,t-1}$ and thus $\H^c\ne 0$. Moreover,
  $I_{c,t-1}(\H^c)\subseteq P$, 
  so there is a prime $Q\subseteq P\cap R_{c,t-1}$ minimal over
  $I_{c,t-1}(\H^c)$. By induction on $t$, $Q$ has $c$ generators of
  which $\lfloor\frac{c}{2}\rfloor$ are in
  $\{y_0, \dots, y_{t-1+\lfloor\frac{c-2}{2}\rfloor}\}$, and thus it
  is enough to show that $QR_{c,t}=P$. But this is immediate since
  $I_{c-1,t}(\H_c)\subseteq I_{c,t-1}(\H^c)R_{c,t}$ by Proposition
  (\ref{p:subsetcIversion}).
\end{proof}

\begin{corollary}\label{c:minprimes}
  Let $\H$ be an $\mathcal O$-sequence with $-1<\ell(\H)<\infty$,
  $c\ge \H(1)$, $t\ge \ell(\H)$, and let $s\ge 0$. Then
  $\dim R_{c,t,s}/I_{c,t}(\H)=2t+s$.
\end{corollary}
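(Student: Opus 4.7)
The plan is entirely driven by Proposition~\ref{p:minprimes}. First I would compute the number of variables in $R_{c,t,s}$: summing the three blocks gives $(t+\lfloor\tfrac{c-1}{2}\rfloor+1) + (t+\lfloor\tfrac{c-2}{2}\rfloor+1) + s$, and a quick parity check shows $\lfloor\tfrac{c-1}{2}\rfloor+\lfloor\tfrac{c-2}{2}\rfloor = c-2$ for every $c\ge 0$, so there are $2t+c+s$ variables in total.

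Next I would note that $I_{c,t}(\H)$ is supported inside the subring $R_{c,t,0}\subseteq R_{c,t,s}$ generated by the $x$- and $y$-variables, so the minimal primes of its extension to $R_{c,t,s}$ are precisely the extensions of its minimal primes in $R_{c,t,0}$; in particular each is still generated by exactly $c$ variables by Proposition~\ref{p:minprimes}. Since $I_{c,t}(\H)$ is a squarefree monomial ideal, its height in $R_{c,t,s}$ equals the minimum number of generators of a minimal prime, which is $c$. Therefore $\dim R_{c,t,s}/I_{c,t}(\H) = (2t+c+s)-c = 2t+s$.

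There is no real obstacle here; the corollary is pure bookkeeping on top of Proposition~\ref{p:minprimes}. The only minor care required is when $c\le 1$, in which case one of the floor expressions is negative and the corresponding variable block is empty, but the total variable count $2t+c+s$ remains valid in all cases.
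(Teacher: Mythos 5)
Your argument is correct and is exactly the reasoning the paper leaves implicit: Proposition~\ref{p:minprimes} gives that every minimal prime has height $c$, the ambient ring has $2t+c+s$ variables, and subtraction gives the claim. The only quibble is your parenthetical remark that a negative floor makes the corresponding block empty (it merely shortens it unless $t=0$), but this does not affect the count $2t+c+s$ or the proof.
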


To show that $R/I_{c,t}(\H)$ is Cohen-Macaulay requires a few ring
theoretic observations. First recall the well known fact that for any
graded ring $R$ and $I\subseteq R$ a homogeneous ideal, if $R/I$ is
Cohen-Macaulay, then $R[x]/IR[x]$ is Cohen-Macaulay
%2.1.9 in Bruns and Herzog
 with $\dim (R[x]/IR[x])= \dim(
R/I)+1$.
% 1.2.10d in Bruns and Herzog
Moreover by tensoring a minimal free resolution of $R/I$ by $R[x]$,
% $R[x]$ is flat $R$ module
we observe that ${\rm pd}(R[x]/IR[x])= {\rm pd}(R/I)$, and the graded
Betti numbers do not change. % ${\rm reg} R/I=
% {\rm reg}
% R/IR[x]/IR[x]$and
% if $R/I$ has a
% $t$-linear
% resolution then
% so does
% $R[x]/IR[x]$.

A less standard but equally easy fact is as follows.

\begin{lemma}\label{l:multbyomega}
  Let $R$ be a graded ring and $I\subseteq R$ be a homogeneous ideal
  such that $R/I$ is Cohen Macaulay. If $f$ is a $d$-form of $R$ which
  is a non-zero-divisor on $R/I$, then ${\rm depth} (R/fI)= {\rm depth}
  (R/I)$, ${\rm pd} (R/fI)= {\rm pd}(R/I)$, and 
  $\beta^{I}_{i,j}=\beta^{fI}_{i,j+d}$ for all $i,j\in \N$.%, ${\rm
                                                           %reg}
                                                           %R/I+d={\rm
                                                           %reg}
                                                           %R/fI$.
\end{lemma}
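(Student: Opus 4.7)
The plan is to build an explicit minimal graded free resolution of $R/fI$ by lightly modifying one of $R/I$, and then to read off all three conclusions from it. Let $F_\bullet \to R/I \to 0$ be a minimal graded free resolution with $F_0 = R$ and differentials $\partial_i : F_i \to F_{i-1}$. I would define a new complex $F'_\bullet$ by setting $F'_0 = R$ and $F'_i = F_i(-d)$ for $i \ge 1$, taking $\partial'_i$ to be the shifted copy of $\partial_i$ for $i \ge 2$, and defining $\partial'_1 : F_1(-d) \to R$ as the composition of the shifted $\partial_1 : F_1(-d) \to R(-d)$ with multiplication by $f : R(-d) \to R$.

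The image of $\partial'_1$ equals $f \cdot \mathrm{im}(\partial_1) = fI$, so $H_0(F'_\bullet) = R/fI$. Exactness at $F'_i$ for $i \ge 2$ is immediate from exactness of $F_\bullet$, since the differentials are just shifted copies of those in $F_\bullet$. The crucial step is exactness at $F'_1$: a cycle $\xi \in F_1(-d)$ satisfies $f \cdot \partial_1(\xi) = 0$ in $R$, and since the ambient ring $R = R_{c,t,s}$ is a polynomial ring, $f$ is a non-zero-divisor on $R$, forcing $\partial_1(\xi) = 0$. Exactness of $F_\bullet$ then puts $\xi$ into $\mathrm{im}(\partial_2) = \mathrm{im}(\partial'_2)$. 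This is the one place where one really needs $f$ regular on the ambient $R$, not merely on $R/I$.

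Minimality of $F'_\bullet$ is then easy: for $i \ge 2$ the matrix of $\partial'_i$ agrees with that of $\partial_i$ and hence has entries in the maximal ideal, and the matrix of $\partial'_1$ has entries $f g_j$ (with $g_j$ generators of $I$), each of positive degree since $d \ge 1$. Therefore $\mathrm{pd}(R/fI) = \mathrm{pd}(R/I)$; since $R$ is Cohen-Macaulay the Auslander--Buchsbaum formula then forces $\mathrm{depth}(R/fI) = \mathrm{depth}(R/I)$; and comparing summands of $F'_i$ with those of $F_i$ in each internal degree yields $\beta^{fI}_{i,j+d} = \beta^I_{i,j}$ for $i \ge 1$ (the $(i,j)=(0,0)$ slot, where both quotients contribute $1$ in internal degree $0$, is the trivial exception to the shift). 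The main obstacle is just keeping careful track of the graded shifts, together with that single use of $f$ being a non-zero-divisor on $R$; no further commutative algebra is required.
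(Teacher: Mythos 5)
Your proof is correct and is essentially the paper's own argument: both identify the syzygies of $(ff_1,\dots,ff_r)$ with those of $(f_1,\dots,f_r)$ shifted by $d$ (using that $f$ is a non-zero-divisor on the ambient ring, which in the application is the polynomial ring $R_{c,t,s}$), so that the minimal resolution of $R/fI$ is the resolution of $R/I$ with every term beyond the zeroth twisted by $-d$, and the depth statement then follows from Auslander--Buchsbaum. Your explicit construction of the shifted resolution, together with your flagging of the $(i,j)=(0,0)$ exception and of the need for $f$ to be regular on $R$ itself rather than merely on $R/I$, only makes explicit points the paper's proof uses implicitly.
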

\begin{proof}
  Let $(f_1, \dots, f_r)$ be a minimal generating set for $I$ with
  $d_i=\deg f_i$ for $i=1, \dots, r$. Then $(ff_1, \dots, ff_r)$ is a
  minimal generating set for $fI$ and setting $\F_{\bullet}$ and
  $\G_{\bullet}$ to be minimal free resolutions of $R/I$ and $R/fI$
  with respective differentials $\delta_i$ and $\partial_i$, we may
  take $$\delta_1= \left[\begin{matrix}f_1\\ \vdots\\
      f_r\end{matrix}\right]\mbox{ and }\partial_1=
  \left[\begin{matrix}ff_1\\ \vdots\\
      ff_r\end{matrix}\right].$$
  Note that $\ker\delta_1$ is a submodule of
  $R(-d_1)\oplus \cdots \oplus R(-d_r)$ while $\ker\partial_1$ is a
  submodule of $R(-d_1-d)\oplus \cdots \oplus R(-d_r-d)$. But
  $[g_1,\dots, g_r]\in \ker\delta_1$ if and only if it is in the
  kernel of $\partial_1$, that is $\sum_{i=0}^r f_ig_i = 0 $ if and
  only if $\sum_{i=0}^r ff_ig_i= f\sum_{i=0}^r f_ig_i =0$, and thus
  $\ker\delta_1(-d)\cong \ker\partial_1$ in the obvious way. Thus the
  assertions concerning resolutions follows immediately and
  ${\rm depth}(R/fI) = {\rm depth}(R/I)$ by the Auslander-Buchsbaum
  formula. 
\end{proof}

We can now show that $R/I_{c,t}(\H)$ is Cohen-Macaulay.

\begin{theorem}\label{t:IisCM}
  Let $\H$ be an $\mathcal O$-sequence with $-1<\ell(\H)$,
  $c\ge \H(1)$, and $t\ge \ell(\H)$.  Then $R_{c,t,s}/I_{c,t}(\H)$ is
  a dimension $2t+s$ Cohen-Macaulay algebra with projective dimension
  $c$.
\end{theorem}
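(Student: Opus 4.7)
The dimension claim is already Corollary \ref{c:minprimes}, and once Cohen--Macaulayness is in hand the projective dimension $c$ follows immediately from the Auslander--Buchsbaum formula, since $R_{c,t,s}$ is a polynomial ring in $2t+c+s$ variables. The plan is therefore to prove Cohen--Macaulayness by a double induction on $c$ and $t$, with the base case $c=0$ trivial because $I_{0,t}(\H)=0$.

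For the inductive step I would split on whether $\H^c=0$ or not, which I note is equivalent to whether $\H(1)<c$ or $\H(1)=c$ (the degree-one component of the lex ideal $L$ is spanned by the lex-first $c-\H(1)$ variables). If $\H^c=0$ the recursive definition collapses to $I_{c,t}(\H)=I_{c-1,t}(\H_c)+(\omega_{c,t})$, and because $\omega_{c,t}$ is the unique variable of $R_{c,t,s}$ not belonging to $R_{c-1,t,s}$, we obtain a ring isomorphism $R_{c,t,s}/I_{c,t}(\H)\cong R_{c-1,t,s}/I_{c-1,t}(\H_c)$, to which the inductive hypothesis applied at $(c-1,t)$ delivers the desired conclusion directly.

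When instead $\H^c\ne 0$ we have $\H(1)=c\ge 1$, which provides the hypothesis needed to invoke the short exact sequence of Proposition \ref{p:intersection3}. Since $\omega_{c,t}$ lies in neither $R_{c-1,t,s}$ nor $R_{c,t-1,s}$, it is a non-zero-divisor on both $R/I_{c-1,t}(\H_c)$ and $R/I_{c,t-1}(\H^c)$, so Lemma \ref{l:multbyomega} lets me strip off the $\omega_{c,t}$ prefactor when computing depths of the two ``$\omega_{c,t}$-multiplied'' quotients appearing in the sequence. Combining the inductive hypothesis with the standard fact that polynomial extensions preserve depth then shows that the leftmost term of the sequence has depth $2t+s+1$ while the central direct sum has depth $\min(2t+s+1,2t+s)=2t+s$. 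The depth lemma applied to the short exact sequence then yields ${\rm depth}(R/I_{c,t}(\H))\ge 2t+s$, which combined with the known dimension $2t+s$ forces equality and thus Cohen--Macaulayness.

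The main technical obstacle is the careful variable bookkeeping: one must verify that $R_{c,t,s}$ has exactly one more variable than $R_{c-1,t,s}$ (and that variable is precisely $\omega_{c,t}$) and exactly two more than $R_{c,t-1,s}$ (one of which is again $\omega_{c,t}$), with the exact counts depending on the parity of $c$. These counts are what ensure that after extending, the leftmost term in the short exact sequence has depth exactly one greater than the middle term, which is precisely the gap required for the depth lemma to deliver the sharp lower bound of $2t+s$ rather than losing a unit.
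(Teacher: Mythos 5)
Your proposal is correct and follows essentially the same route as the paper: a double induction on $c$ and $t$, splitting on $\H^c=0$ versus $\H^c\ne 0$, using the short exact sequence of Proposition (\ref{p:intersection3}) together with Lemma (\ref{l:multbyomega}), and closing with Corollary (\ref{c:minprimes}) and Auslander--Buchsbaum. The only difference is bookkeeping: you bound depth from below via the depth lemma on that sequence and deduce the projective dimension afterwards, whereas the paper pins down the projective dimension directly from the lengths of the resolutions in the mapping cone and deduces Cohen--Macaulayness afterwards---an equivalent dual formulation (and in fact the depth-counting version is the one the paper itself uses later in Theorem (\ref{t:Ggor})).
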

\begin{proof}
  It suffices, by Corollary (\ref{c:minprimes}) and the
  Auslander-Buchsbaum formula, to show that $R_{c,t,s}/I_{c,t}(\H)$
  has projective dimension $c$.

  We proceed by induction on $c$ and $t$. When $c=0$, $I_{0,t}(\H)=0$
  so the result is obvious. If $t=0$ then
  $I_{c,0}(\H)=(x_0, \dots, x_{\lfloor\frac{c-1}{2}\rfloor}, y_0,
  \dots, y_{\lfloor\frac{c-2}{2}\rfloor}),$
  and the Koszul complex givens the result.

  Now suppose $c,t>0$ so that
  $I_{c,t}(\H)= I_{c-1,t}(\H_c)+\omega_{c,t} I_{c,t-1}(\H^c).$
  % By induction on $c$ we know that $R_{c-1,t,s}/I_{c-1,t}(\H_c)$ is
  % Cohen-Macaulay with projective dimension $c-1$ so by Lemma
  % (\ref{l:multbyomega}) and the comments preceding it, we conclude
  % that  both $R/I_{c-1,t}(\H_c)R$ and
  % $R/\omega_{c,t} I_{c-1,t}(\H_c)R$ have projective dimension $c-1$
  % (where as usual we write $R=R_{c,t,s}$).  Similarly, by induction on
  % $t$ we know that $R_{c,t-1,s}/I_{c,t-1}(\H^c)$ has projective
  % dimension $c$ and hence that $R/I_{c,t-1}(\H^c)R$ and
  % $R/\omega_{c,t} I_{c,t-1}(\H^c)R$ both have projective dimension
  % $c$.

  % Now applying Lemma (\ref{l:extendbyvariable}) twice, once for each
  % of $x_{t+\lfloor\frac{c-1}{2}\rfloor}$ and
  % $y_{t+\lfloor\frac{c-2}{2}\rfloor}$,

  If $\H^c=0$, then $I_{c,t}(\H)=I_{c-1,t}(\H_c)+\omega_{c,t}$ and the
  mapping cone resolution on the short exact sequence
  $$0\to \frac{R}{I_{c-1,t}(\H_c)} \xrightarrow[]{\omega_{c,t}}
  \frac{R}{I_{c-1,t}(\H_c)} \to \frac{R}{I_{c,t}(\H)}\to 0$$
  is minimal (because $\omega_{c,t}$ is regular modulo
  $I_{c-1,t}(\H_c)$), so the result follows immediately by induction.

  If $\H^c\ne 0$, then consider the sequence, exact by Proposition
  (\ref{p:intersection3}),
  $$0\to \frac{R}{\omega_{c,t}I_{c-1,t}(\H_c)} \to
  \frac{R}{I_{c-1,t}(\H_c)}\oplus
  \frac{R}{\omega_{c,t}I_{c,t-1}(\H^c)} \to \frac{R}{I_{c,t}(\H)}\to
  0.$$
  Let $\F_{\bullet}$, $\G_{\bullet}$, and $\H_{\bullet}$ be minimal
  free resolutions of, respectively, $R/\omega_{c,t} I_{c-1,t}(\H_c)$,
  $R/I_{c-1,t}(\H_c)$, and $R/\omega_{c,t} I_{c,t-1}(\H^c)$, and let
  $T_\bullet$ be the minimal free resolution of $R/I_{c,t}(\H)$ living
  inside the mapping cone resolution. Thus
  $T_r\subseteq F_{r-1}\oplus G_r\oplus H_r=0$ for $r>c$ since
  ${\rm pd}\left(R/\omega_{c,t}I_{c-1,t}(\H_c)\right) = {\rm
    pd}\left(R/I_{c-1,t}(\H_c)\right)=c-1,$
  and ${\rm pd}\left(R/\omega_{c,t}I_{c,t-1}(\H^c)\right)=c$ (by
  induction and Lemma (\ref{l:multbyomega})). Furthermore,
  $0\ne H_c\subseteq T_c$ because $F_c=0$ and hence any non-minimality
  in the mapping cone resolution cannot involve $H_c$.
  % .  $T_c\ne 0$, the final assertion a consequence of the fact that
  % $F_c$ is zero and hence $H_c\ne 0$ is isomorphic to a submodule of
  % $F_c$.  (since $F_c$ is zero) that $H_c$ is isomorphic to a
  % submodule of $T_c$.
  Thus the projective dimension of $R/I_{c,t}(\H)$ is $c$ as required.
% is
%  isomorphic to a submodule of $F_{c-1}\oplus H_c$. But $F_c$ is zero,
%  so $H_c$ (since $F_c$ is zero, there can be no non-minimality
%  introduced in the chain map between $F_\bullet$ and
%  $G_{\bullet}\oplus H_{\bullet}$ which involves $H_c$). 
\end{proof}

Now we show that the Betti diagrams of $I_{c,t}(\H)$ and the lex
ideal attaining $\H$ coincide. The first step is to demonstrate that
the graded Betti numbers of the latter can be decomposed via $\H_c$ and
$\H^c$.

\begin{lemma}\label{l:lexbettinumbers}
  Let $\H$ be an $\mathcal O$-sequence with $-1<\ell(\H)<\infty$,
  $c\ge \max\{1, \H(1)\}$, $L$ be the lex ideal in
  $T=k[\mu_1, \dots, \mu_c]$ with Hilbert function $\H$, and $L'$ be
  the lex ideal in $T'=k[\lambda_1, \dots, \lambda_{c-1}]$ with
  Hilbert function $H(T/(L+\mu_1))$. Then for
  $(0,1)\ne (i,j)\ne (1,1)$,
  $\beta^{L}_{i,j}=\beta^{L+\mu_1}_{i,j}+\beta^{L:\mu_1}_{i,j-1}=
  \beta^{L'}_{i-1,j-1}+\beta^{L'}_{i,j} +\beta^{L:\mu_1}_{i,j-1}$.
\end{lemma}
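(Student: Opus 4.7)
The statement breaks naturally into two equalities, each corresponding to a different short exact sequence, and I would prove them separately.

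For the first equality $\beta^L_{i,j} = \beta^{L+\mu_1}_{i,j} + \beta^{L:\mu_1}_{i,j-1}$, I would apply the horseshoe lemma to the short exact sequence
$$0 \to T/(L:\mu_1)(-1) \xrightarrow{\mu_1} T/L \to T/(L+\mu_1) \to 0$$
displayed just before Definition \ref{d:IctH}. The horseshoe immediately produces a $T$-free resolution of $T/L$ whose rank in bidegree $(i,j)$ is exactly $\beta^{L:\mu_1}_{i,j-1} + \beta^{L+\mu_1}_{i,j}$, so the equality reduces to tracking where this resolution fails to be minimal. When $\mu_1 \in L$ one has $L:\mu_1 = T$ and $L+\mu_1 = L$, and the formula is trivial. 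When $\mu_1 \notin L$ I would partition $G(L) = A \sqcup B$ into generators divisible by $\mu_1$ and those not, verify (using the lex hypothesis) the identifications $G(L+\mu_1) = \{\mu_1\} \cup B$ and $G(L:\mu_1) = \{u/\mu_1 : u \in A\}$, note that $m(u/\mu_1) = m(u)$ for $u \in A$, and plug into the Eliahou--Kervaire formula for the graded Betti numbers of a stable ideal. A term-by-term comparison then gives an exact match except for two isolated discrepancies on the right: a spurious $1$ at $(0,1)$ coming from $\beta^{L:\mu_1}_{0,0}$, and a spurious $1$ at $(1,1)$ coming from the new linear generator $\mu_1 \in G(L+\mu_1)$. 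These are precisely the excluded bidegrees.

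For the second equality $\beta^{L+\mu_1}_{i,j} = \beta^{L'}_{i,j} + \beta^{L'}_{i-1,j-1}$, I would exploit the isomorphism $T/(L+\mu_1) \cong T'/L'$ described in the proof of Lemma \ref{l:HFLemma}. Let $J = \phi(L') \cdot T$ be the extension of $L'$ to $T$ along $\phi: T' \to T$. Then $L+\mu_1 = J + (\mu_1)$, every generator of $J$ lies in $k[\mu_2, \dots, \mu_c]$, and $\mu_1$ is a non-zero-divisor on $T/J$. The resulting short exact sequence
$$0 \to T/J(-1) \xrightarrow{\mu_1} T/J \to T/(L+\mu_1) \to 0$$
lets me construct the minimal $T$-free resolution of $T/(L+\mu_1)$ as the mapping cone of multiplication by $\mu_1$ on the minimal $T$-free resolution of $T/J$. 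The lift of $\mu_1$ at each level is scalar multiplication by $\mu_1$, which carries no unit entries, so the cone is minimal. Since $T = T'[\mu_1]$ is a polynomial extension, the standard observation recalled just before Lemma \ref{l:multbyomega} gives $\beta^{T/J}_{i,j} = \beta^{L'}_{i,j}$, and the mapping cone count $\beta^{T/(L+\mu_1)}_{i,j} = \beta^{T/J}_{i,j} + \beta^{T/J}_{i-1,j-1}$ yields the desired identity.

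The main obstacle lies in the first equality, where the horseshoe supplies only an upper bound and every cancellation must be accounted for. Ruling out cancellations beyond those at $(0,1)$ and $(1,1)$ is where the stable structure of a lex ideal is essential; the Eliahou--Kervaire formula is the most economical bookkeeping device, though a direct analysis of the connecting map in the horseshoe --- showing that all but one of its entries can be chosen in the maximal ideal --- would yield the same conclusion. The second equality is essentially formal once the correct ambient short exact sequence has been identified.
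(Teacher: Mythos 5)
Your proposal is correct and follows essentially the same route as the paper: the second equality via the minimal mapping cone over multiplication by $\mu_1$ on the preimage of $L'$, and the first by splitting $G(L)$ according to divisibility by $\mu_1$, identifying the minimal generators of $L+\mu_1$ and $L:\mu_1$ (using lexness, and preserving the maximal support variable), and comparing Eliahou--Kervaire formulas, with the same two exceptional bidegrees $(0,1)$ and $(1,1)$. The horseshoe-lemma framing is inessential, since the actual bookkeeping you do is exactly the paper's generator-set comparison.
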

\begin{proof} 
  Let $\phi:T'\to T$ by $\phi(\lambda_i)=\mu_{i+1}$. Then
  $L'=\phi^{-1}(L\cap k[\mu_2, \dots, \mu_c])$ so
  $T/(L+\mu_1)\cong T/(\phi(L')+\mu_1)$ as $T$-modules. By the mapping
  cone (which is minimal since $\mu_1$ is regular modulo $\phi(L')$),
  $\beta^{(L+\mu_1)}_{i,j}=
  \beta^{\phi(L')+\mu_1}_{i,j}=\beta^{\phi(L')}_{i-1,j-1}+\beta^{\phi(L')}_{i,j}=
  \beta^{L'}_{i-1,j-1}+\beta^{L'}_{i,j}$
  as required for the second equality.

  For the first equality, note that the result is obvious if $j=0,1$,
  so we take $j>1$. Given $I\subseteq T$, let $G(I)_j$ be the degree
  $j$ minimal generators of $I$.  Then it is easy to see that
  $G(L)_j=G(L+\mu_1)_j\sqcup \mu_1G(L:\mu_1)_{j-1}$ (since $j>1$) so
  the theorem holds if $i=1$. The result is also obvious if $i=0$ so
  we suppose $i>1$.

  Now write $G(I)_{j,k}$ to be set of all degree $j$ minimal monomial
  generators $m\in I$ with $\max\{p\ |\ p\in {\rm supp}(m)\}=k$. Since
  $L$, $(L:\mu_1)$, and $L+\mu_1$ are all lex, the Eliahou-Kervaire
  formula (see \cite{HerzogHibi:MonomialIdeals}, equation 7.7 modulo a
  slight typo) implies that for $i>0$
  \begin{eqnarray*}
    \beta_{i,j}^L&=&\sum_{k=1}^c {k-1\choose i-1}\left|G(L)_{j-i+1,k}\right|,\\
    \beta_{i,j-1}^{L:\mu_1}&=&\sum_{k=1}^c {k-1\choose
                               i-1}\left|G(L:\mu_1)_{j-i,k}\right|, \mbox{
                               and}\\
    \beta_{i,j}^{L+\mu_1}&=&\sum_{k=1}^c {k-1\choose
                             i-1}\left|G(L+\mu_1)_{j-i+1,k}\right|.
\end{eqnarray*}

So it is enough to show that whenever $i>1$ and
${k-1\choose i-1}\ne 0$, then
$$G(L)_{j-i+1,k}=\mu_1G(L:\mu_1)_{j-i,k}\sqcup G(L+\mu_1)_{j-i+1,k}, $$ Because
$i>1$, we may assume $k>1$, whence it is obvious that
$ \mu_1G(L:\mu_1)_{j-i,k}\cap G(L+\mu_1)_{j-i+1,k}=\emptyset$.

Now, if $m\in G(L)_{j-i+1,k}$ and $1\not\in {\rm supp}(m)$, then
$m\in G(L+\mu_1)_{j-i+1,k}$. Otherwise, $\frac{m}{\mu_1}\in L:\mu_1$.
Since $m\ne \mu_1$ (because $k>1$),
$k\in {\rm supp}\left(\frac{m}{\mu_1}\right)$ and obviously $m/\mu_1$
is a minimal generator of $L:\mu_1$, so
$m\in \mu_1G(L:\mu_1)_{j-i,k}$ as required.

For the other direction, we note that
$G(L+\mu_1)_{j-i+1,k}\subseteq G(L)_{j-i+1,k}$ (recall that $k>1$). So
suppose that $m\in G(L:\mu_1)_{j-i,k}$. If $m\mu_1$ is not minimal
in $L$, then there is $p\in {\rm supp}\left(m\mu_1\right)$ such that
$\frac{m\mu_1}{\mu_p}\in L$. If $p\ne 1$, then
$\frac{m}{\mu_p}\in L:\mu_1$ a contradiction. But if $p=1$, then
$m\in L$, so $\frac{m\mu_1}{\mu_k}\in L$ (because $L$ is lex) whence
$\frac{m}{\mu_k}\in L:\mu_1$, a contradiction.
\end{proof}

Now computing the Betti diagram of $R/I_{c,t}(\H)$ is simply a matter
of unraveling the induction.

\begin{theorem}\label{t:IBN}  Let $\H$ be an $\mathcal O$-sequence
  with $\ell(\H)<\infty$, $c\ge \H(1)$, $t\ge \ell(\H)$,
  $T=k[\mu_1, \dots, \mu_c]$, and $L\subseteq T$ be the lex ideal
  attaining $\H$. Then $\beta^{I_{c,t}(\H)}_{i,j} = \beta^L_{i,j}$.
\end{theorem}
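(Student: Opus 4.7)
The plan is to induct doubly on $c$ and $t$, mirroring the recursive construction of $I_{c,t}(\H)$. First I would handle the base cases: when $\H = 0$ both $R/I_{c,t}(\H)$ and $T/L$ vanish; when $c = 0$ both $I_{0,t}(\H)$ and $L$ are zero ideals in their respective ambient rings; and when $t = 0$ both $I_{c,0}(\H)$ and $L = (\mu_1,\dots,\mu_c)$ are generated by $c$ distinct linear forms, so they share the Koszul Betti diagram.

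For the inductive step with $c,t > 0$ and $\H \ne 0$, I would split according to whether $\H^c = 0$. If $\H^c = 0$ then $\mu_1 \in L$, so setting $L'$ to be the lex ideal in $k[\mu_2,\dots,\mu_c]$ attaining $\H_c$ we have $L = (\mu_1) + L' T$; correspondingly $I_{c,t}(\H) = I_{c-1,t}(\H_c) + (\omega_{c,t})$. Minimal mapping cones on the two parallel short exact sequences (noting that $\mu_1$ and $\omega_{c,t}$ are the respective regular elements) produce matching recursions $\beta_{i,j} = \beta^{\text{prev}}_{i,j} + \beta^{\text{prev}}_{i-1,j-1}$, and the inductive hypothesis closes this case. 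If $\H^c \ne 0$, I would instead invoke Proposition \ref{p:intersection3} and extract Betti additivity from its $\operatorname{Tor}$ long exact sequence. Lemma \ref{l:multbyomega} (which applies because $\omega_{c,t}$ is regular on both $R/I_{c-1,t}(\H_c)$ and $R/I_{c,t-1}(\H^c)$) then converts this additivity into the recursion
\[
\beta^{I_{c,t}(\H)}_{i,j} = \beta^{I_{c-1,t}(\H_c)}_{i,j} + \beta^{I_{c,t-1}(\H^c)}_{i,j-1} + \beta^{I_{c-1,t}(\H_c)}_{i-1,j-1}
\]
away from the low-degree boundary. By the inductive hypothesis this equals $\beta^{L'}_{i,j} + \beta^{L:\mu_1}_{i,j-1} + \beta^{L'}_{i-1,j-1}$, which Lemma \ref{l:lexbettinumbers} identifies with $\beta^L_{i,j}$ outside the exceptional cells $(0,1)$ and $(1,1)$. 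The finitely many boundary cells $(0,0), (0,1), (1,0), (1,1)$ are dispatched by direct computation, relying on Proposition \ref{p:IctHlinears} to match linear-generator counts.

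The main hurdle will be justifying the Betti additivity when $\H^c \ne 0$. Concretely this means showing that the map $\phi_* : \operatorname{Tor}_i(R/\omega_{c,t} I_{c-1,t}(\H_c), k) \to \operatorname{Tor}_i(R/I_{c-1,t}(\H_c), k) \oplus \operatorname{Tor}_i(R/\omega_{c,t} I_{c,t-1}(\H^c), k)$ vanishes for $i \ge 1$, which causes the $\operatorname{Tor}$ long exact sequence to split into short exact sequences. Its first component is induced by the surjection $R/\omega_{c,t} I_{c-1,t}(\H_c) \to R/I_{c-1,t}(\H_c)$, whose chain-map lift at levels $\ge 1$ is multiplication by $\omega_{c,t} \in \mathfrak{m}$ and hence zero on $\operatorname{Tor}(-, k)$. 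The second component, induced by the inclusion from Proposition \ref{p:subsetcIversion}, is the delicate piece: in the squarefree monomial setting it reduces to the structural claim that $I_{c-1,t}(\H_c)$ and $I_{c,t-1}(\H^c)$ share no minimal generators, a claim I would verify by tracing both iterative constructions in parallel.
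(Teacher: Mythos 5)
Your outline follows the paper's own route quite closely: double induction, the short exact sequence of Proposition \ref{p:intersection3}, Lemma \ref{l:multbyomega} for the degree shift by $\omega_{c,t}$, Lemma \ref{l:lexbettinumbers} to reassemble $\beta^L$, and a direct check of the low cells via Proposition \ref{p:IctHlinears}; your observation that the comparison map into the resolution of $R/I_{c-1,t}(\H_c)$ can be taken to be multiplication by $\omega_{c,t}$ in homological levels $\ge 1$ is a clean substitute for the paper's multidegree argument, and your $\H^c=0$ case is fine. The gap is exactly at the step you flag as the main hurdle. For monomial ideals $B\subseteq C$, the condition ``no minimal generator of $B$ is a minimal generator of $C$'' is equivalent to $B\subseteq \mathfrak{m}C$, i.e.\ to the vanishing of the induced map on ${\rm Tor}_1$ only; it does \emph{not} force the maps on ${\rm Tor}_i$ for $i\ge 2$ to vanish. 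A squarefree counterexample: in $k[x,y,a,b]$ take $B=(xya,\,xyb)\subseteq C=(xa,\,yb)$. The ideals share no minimal generators, yet the syzygy $b\,e_{xya}-a\,e_{xyb}$ lifts to $yb\,e_{xa}-xa\,e_{yb}$, the minimal Koszul syzygy of $C$, so the induced map on ${\rm Tor}_2$ is nonzero (both Tor's are $k$ in degree $4$). Hence your proposed reduction of the higher vanishing (equivalently, of minimality of the mapping cone beyond the single unit cancellation at the generator level) to a statement about shared minimal generators cannot close the argument; you need control of the internal degrees in which the two resolutions live, not just of the generators.

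This is precisely where the paper does its real work. Letting $\alpha$ be the least exponent with $\mu_2^{\alpha}\in L$, one has $\beta^{L'}_{i,j}=0$ for $1<j<\alpha+i-1$, while $\mu_2^{\alpha}\in L$ forces $\mu_c^{\alpha-1}\in L:\mu_1$, so the regularity of $T/(L:\mu_1)$ is at most $\alpha-2$ and $\beta^{L:\mu_1}_{i,j}=0$ for $j\ge \alpha+i-1$. Thus for $j>1$ the generator degrees occurring in the minimal resolutions of $R/\omega_{c,t}I_{c-1,t}(\H_c)$ and $R/\omega_{c,t}I_{c,t-1}(\H^c)$ are disjoint, so no cancellation can occur there at all, and the only non-minimality is the generator of the first module mapping to the generator of the second. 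Your inductive hypothesis already identifies the two Betti diagrams with those of $L'$ and $L:\mu_1$, so you could import exactly this regularity comparison to repair the argument; without some such degree argument the additivity you need is unproved.
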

\begin{proof}
  If $\H=0$, then $I_{c,t}(\H)=R_{c,t,0}$ and $T=L$ and the result
  follows, so we assume $\H\ne 0$ and hence $t>-1$.

  If $c=0$, then $I_{c,t}(\H)=0$ and $\H=1$. It follows that
  $L=(0)\subseteq T=k$ and we are done. %If $t=0$ then $\H=1$,
%  $I_{c,0}(\H)=(x_0, \dots, x_{\lfloor\frac{c-1}{2}\rfloor}, y_0,
%  \dots, y_{\lfloor\frac{c-2}{2}\rfloor}) = L$
%  (by Proposition (\ref{p:IctHlinears})) and we are finished. 
  If $c=1$, then $I_{1,t}(\H)=(x_0\cdots x_t)$ (see example
  (\ref{e:IctH})) and $L=(\mu_1^{\ell(\H)+1})$ so the result is
  obvious.

  Now suppose that $c>0$, $t>-1$, and $\H\ne 0$. %By Theorem
%  (\ref{t:IctHHilbertfunction}),
%  $$\Delta^{2(t-1)+s}H(R_{c,t-1,s}/I_{c,t-1}(\H^c)) =
%  \H^c=H(T/(L:\mu_1))$$
  Since $H(T/L:\mu_1)=\H^c$, by induction we have
  $\beta^{I_{c,t-1}(\H^c)}_{i,j}=\beta^{L:\mu_1}_{i,j}$, and thus
  $\beta^{L:\mu_1}_{i,j-1} =
  \beta^{\omega_{c,t}I_{c,t-1}(\H^c)}_{i,j}$
  (by Lemma (\ref{l:multbyomega})). Similarly, if we let $L'$ be the
  lex ideal in the polynomial ring with $c-1$ variables and Hilbert
  function $\H_c=H(T/L+\mu_1)$, then
  $\beta^{L'}_{i,j}=\beta^{I_{c-1,t}(\H_c)}_{i,j}$ so
  $\beta^{L'}_{i,j-1}=\beta^{\omega_{c,t}I_{c-1,t}(\H_c)}_{i,j}$.

  % Similarly, if we let $L'$ be the lex ideal in the polynomial ring
  % with $c-1$ variables and Hilbert function $\H_c$, then 
  % $$\Delta^{2t+s}(H(R_{c-1,t,s}/I_{c-1,t}(\H_c))) =
  % \H_c=H(T/(L+\mu_1)),$$
  % and thus $\beta^{L'}_{i,j}=\beta^{I_{c-1,t}(\H_c)}_{i,j}$. By
  % Lemma
  % (\ref{l:multbyomega}),
  % $\beta^{L'}_{i,j-1}=\beta^{\omega_{c,t}I_{c-1,t}(\H_c)}_{i,j}$.

  Obviously $\beta^{I_{c,t}(\H)}_{0,1}=0=\beta^L_{0,1}$ and
  $\beta^{I_{c,t}(\H)}_{1,1}=c-\H(1)=\beta^L_{1,1}$ (by Proposition
  (\ref{p:IctHlinears}) and because $H(T/L)=\H$).

  By Proposition (\ref{p:intersection3}), the sequence
   $$0\to \frac{R}{\omega_{c,t}I_{c-1,t}(\H_c)} \to
   \frac{R}{I_{c-1,t}(\H_c)} \oplus
   \frac{R}{\omega_{c,t}I_{c,t-1}(\H^c)} \to
   \frac{R}{I_{c,t}(\H)}\to 0$$
   is exact, and we claim that the mapping cone resolution of
   $R/I_{c,t}(\H)$ is minimal except in degree $1$ between the zeroth
   and first step. This completes the proof since then, for
   $(0,1)\ne (i,j)\ne (1,1)$,
   $\beta^{I{c,t}(\H)}_{i,j}=
   \beta^{L'}_{i-1,j-1}+\beta^{L'}_{i,j}+\beta^{L:\mu_1}_{i,j-1}=
   \beta^L_{i,j}$
   as required (the last equality from Lemma
   (\ref{l:lexbettinumbers})).

   Consider first the chain map induced by
   $R/\omega_{c,t}I_{c-1,t}(\H_c)\to R/I_{c-1,t}(\H_c)$. Note that
   every generator of $\omega_{c,t}I_{c-1,t}(\H_c)$ is divisible by
   $\omega_{c,t}$. Thus, the multidegree of each minimal generator at
   each step of a minimal free resolution of
   $R/\omega_{c,t}I_{c-1,t}(\H_c)$ must be positive with respect to
   $\omega_{c,t}$. Since $I_{c-1,t}(\H_c)\subseteq R_{c-1,t}$,
   however, the multidegree of each minimal generator at each step of
   a minimal free resolution of $R/I_{c-1,t}(\H_c)$ must be zero with
   respect to $\omega_{c,t}$. It follows that the chain map induced by
   tensoring by $k$ must be zero.

   Now consider the chain map induced by
   $R/\omega_{c,t}I_{c-1,t}(\H_c)\xrightarrow{\delta}
   R/\omega_{c,t}I_{c,t-1}(\H^c).$
   Obviously if $\overline{1}$ is the generator of
   $R/\omega_{c,t}I_{c-1,t}(\H_c)$, then $\delta_0(\overline{1})$ is
   the generator of $R/\omega_{c,t}I_{c,t-1}(\H^c),$ so the mapping
   cone resolution is not minimal (between the zeroth and first step
   in degree $1$).  For $j>1$, however, no cancellation can
   occur. Indeed, let $\alpha$ be the minimal exponent such that
   $\mu_2^\alpha\in L$ (we use here that $c>1$).  Then
   $\beta^{L'}_{i,j}=0$ for $1<j<\alpha+i-1$, so
   $\beta^{\omega_{c,t}I_{c-1,t}(\H_c)}_{i,j}=0$ for $2<j<\alpha+i$
   and it is enough to show that
   $\beta^{\omega_{c,t}I_{c,t-1}(\H^c)}_{i,j}=0$ for $j\ge \alpha+i$,
   i.e., that $\beta^{L:\mu_1}_{i,j}=0$ for $j\ge \alpha+i-1$. So we
   need to show that the regularity of $T/L:\mu_1$ is $< \alpha-1$.
   Now $\mu_2^\alpha\in L$, so $\mu_1\mu_c^{\alpha-1}\in L$, and hence
   $\mu_c^{\alpha-1}\in L:\mu_1$. Since it is well know that the
   regularity of the quotient of a lex ideal is equal to one minus the
   minimal power of $\mu_c$ the ideal contains, we have that the
   regularity of the quotient of $L:\mu_1$ is $\le \alpha-2$ as
   required.

   Thus for $j>1$ the degrees of the generators at each step in
   minimal resolutions of $R/\omega_{c,t}I_{c-1,t}(\H_c)$ and
   $R/\omega_{c,t}I_{c,t-1}(\H^c)$ never coincide, hence we conclude
   that no cancellation can occur (for $j>1$), and thus the mapping
   cone is minimal except in degree $1$ between the zeroth and first
   step as required.
\end{proof}

\begin{corollary}\label{c:IctHHilbertfunction}
  Let $\H$ be an $\mathcal O$-sequence with $\ell(\H)<\infty$,
  $c\ge \H(1)$, and $t\ge \ell(\H)$. Then
  $\Delta^{2t+s}H(R_{c,t,s}/I_{c,t}(\H)) = \H$.
\end{corollary}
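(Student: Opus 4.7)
The plan is to derive the Hilbert function identity from the Hilbert series and the already-computed graded Betti numbers. The key observation is that the numerator of the Hilbert series of a module (as a rational function with denominator a power of $(1-z)$) is the alternating sum of the graded Betti numbers, and by Theorem (\ref{t:IBN}) this numerator is the same for $R_{c,t,s}/I_{c,t}(\H)$ and for $T/L$, where $T=k[\mu_1,\dots,\mu_c]$ and $L\subseteq T$ is the lex ideal attaining $\H$.

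First I would count variables: inspecting Definition (\ref{d:IctH}), the ring $R_{c,t,s}$ has $(t+\lfloor(c-1)/2\rfloor+1)+(t+\lfloor(c-2)/2\rfloor+1)+s = 2t+s+c$ variables (using that $\lfloor(c-1)/2\rfloor+\lfloor(c-2)/2\rfloor = c-2$ for every $c\ge 2$). Writing the Hilbert series of $R_{c,t,s}/I_{c,t}(\H)$ as $K(z)/(1-z)^{2t+s+c}$ with $K(z)=\sum_{i,j}(-1)^i\beta^{I_{c,t}(\H)}_{i,j}z^j$, and likewise the Hilbert series of $T/L$ as $K'(z)/(1-z)^c$ with $K'(z)=\sum_{i,j}(-1)^i\beta^L_{i,j}z^j$, Theorem (\ref{t:IBN}) gives $K(z)=K'(z)$. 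Therefore
$$(1-z)^{2t+s}\,\mathrm{HS}(R_{c,t,s}/I_{c,t}(\H),z) = \mathrm{HS}(T/L,z).$$

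Finally I would translate this equality of power series into the claimed identity of sequences: multiplication of a Hilbert series by $(1-z)$ corresponds to applying $\Delta$ to the underlying Hilbert function, since $(1-z)\sum_d f(d)z^d = \sum_d (f(d)-f(d-1))z^d$. Iterating $2t+s$ times yields $\Delta^{2t+s}H(R_{c,t,s}/I_{c,t}(\H)) = H(T/L) = \H$, where $\H$ is extended by zero past $\ell(\H)$ as usual.

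There is no serious obstacle: the Cohen-Macaulay property (needed so that the Hilbert series takes the form $K(z)/(1-z)^{2t+s+c}$ with no further reduction) is furnished by Theorem (\ref{t:IisCM}), and the Betti-number coincidence is Theorem (\ref{t:IBN}); the rest is bookkeeping on the number of variables.
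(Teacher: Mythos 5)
Your proof is correct, but it takes a somewhat different route from the paper's. The paper's own argument invokes Theorem (\ref{t:IisCM}) and the infinitude of $k$ to pass to an Artinian reduction of $R_{c,t,s}/I_{c,t}(\H)$: killing a regular sequence of $2t+s$ linear forms applies $\Delta^{2t+s}$ to the Hilbert function and preserves the graded Betti numbers, so the reduction and $T/L$ are Artinian quotients of polynomial rings in $c$ variables with the same Betti numbers (Theorem (\ref{t:IBN})), hence the same Hilbert function. You instead work entirely with the Hilbert series identity $(1-z)^{N}\,\mathrm{HS}(R/I,z)=\sum_{i,j}(-1)^i\beta^{I}_{i,j}z^j$ (the Stanley formula the paper itself quotes in the proof of Theorem (\ref{t:JcHBN})), which makes the statement a purely formal consequence of Theorem (\ref{t:IBN}) and the variable count $2t+s+c$; this buys you independence from Cohen--Macaulayness and from the hypothesis that $k$ is infinite, at the cost of no geometric content. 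Two small remarks: your parenthetical claim that Cohen--Macaulayness is ``needed so that the Hilbert series takes the form $K(z)/(1-z)^{2t+s+c}$'' is not right --- that form holds for any graded quotient by additivity of Hilbert series along a free resolution, and CM would only matter if you wanted the series in lowest terms (or an actual Artinian reduction, as the paper does); and you should note, as the paper does before Lemma (\ref{l:multbyomega}), that the Betti numbers of $I_{c,t}(\H)R_{c,t,s}$ over $R_{c,t,s}$ agree with those over $R_{c,t,0}$, so Theorem (\ref{t:IBN}) applies after adjoining the $z$-variables. Your variable count is also fine for $c=0,1$, not just $c\ge 2$.
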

\begin{proof}
  This is trivial if $\H=0$. If $\H\ne 0$, then use Theorem
  (\ref{t:IisCM}). Since $k$ is infinite an Artinian reduction of
  $R/I_{c,t}(\H)$ and $T/L$ have the same graded Betti numbers and
  hence the same Hilbert function.
\end{proof}

\section{Sub-ideals of $I_{c,t}(\H)$}\label{s:subideals}

The goal of this section is to construct a Gorenstein ideal inside of
$I_{c,t}(\H)$ with which to form the link.

The first step is to identify two sub-ideals of $I_{c,t}(\H)$ and
determining how they relate to one another. We also record a few facts
about these new families which will prove useful when we consider the
weak Lefschetz property.

\begin{definition}
  Let $c\ge 0$, $t\ge -1$, and ${\mathfrak m}$ be the unique
  homogeneous maximal ideal in $T=k[\mu_1, \dots, \mu_{c}]$. Then we
  write $\mathbb H_{c,t} = H(T/{\mathfrak m}^{t+1})$.
\end{definition}

\begin{remark}
  The introduction of a doubly subscripted Hilbert function could turn
  following our iterative definition into an unmitigated disaster. We
  avoid this difficultly by introducing special notation for
  $I_{c,t}(\mathbb H_{c,t})$ which turns out to respect our inductive
  construction.
%This comes down to checking
%  definitions. Momentarily introducing additional notation, let
%  ${\mathfrak m}(c)$ be the unique homogeneous maximal ideal of
%  $k[\mu_1, \dots, \mu_{c}]$. It is obvious that $k[\mu_1, \dots,
%  \mu_c]/(\mu_1+m(c)^{t+1})\cong k[\mu_1, \dots, \mu_{c-1}]/{\mathfrak
%    m}(c-1)^{t+1}$ so that $({\mathbb H}_{c,t})_c= H(k[\mu_1, \dots,
%  \mu_c]/(\mu_1+m(c)^{t+1}))= H(k[\mu_1, \dots, \mu_{c-1}]/{\mathfrak
%    m}(c-1)^{t+1})={\mathbb H}_{c-1,t}$. Similarly, $k[\mu_1, \dots,
%  \mu_c]/(m(c)^{t+1}:\mu_1)\cong k[\mu_1, \dots, \mu_{c}]/{\mathfrak
%    m}(c)^{t}$ so that $({\mathbb H}_{c,t})'_c= H(k[\mu_1, \dots,
%  \mu_c]/(m(c)^{t+1}:\mu_1))= H(k[\mu_1, \dots, \mu_{c}]/{\mathfrak
%    m}(c)^{t})={\mathbb H}_{c,t-1}$.
\end{remark}

\begin{definition}
  Given $c\ge 0$ and $t\ge -1$, then write
  $A_{c,t}=I_{c,t}(\mathbb H_{c,t})$.
\end{definition}

\begin{remark}\label{r:Act}
  Given and $\mathcal O$-sequence $\H$ with $-1<\ell(\H)<\infty$,
  $c\ge \H(1)$, and $t\ge \ell(\H)$, then $\H\le \mathbb H_{c,t}$ and
  hence $A_{c,t}\subseteq I_{c,t}(\H)$.  Furthermore, it is easy to
  see that $({\mathbb H}_{c,t})_c={\mathbb H}_{c-1,t}$ and
  $({\mathbb H}_{c,t})^c={\mathbb H}_{c,t-1}$, so the $A_{c,t}$ follow
  the same iterative rule as the $I_{c,t,}(\H)$. That is,
  $$A_{c,t}=A_{c-1,t}R_{c,t}+\omega_{c,t}A_{c,t-1}R_{c,t}$$ with
  $A_{c,-1}=R_{c,-1}$ and for $t>-1$, $A_{0,t}=(0)\subseteq
  R_{0,t}$. %As in
%  Proposition (\ref{p:subsetcIversion}) we have
%  $A_{c-1,t}\subseteq A_{c,t-1}$. 
  Since the lex ideal attaining $\mathbb H_{c,t}$ is
  $(\mathfrak m)^{t+1}$ it is immediate from Theorem (\ref{t:IBN})
  that $A_{c,t}$ is generated in degree $t+1$ (when
  nonzero).% and has a $t$-linear resolution.
%One can easily compute that $$A_{c,0}=(x_0, \dots,
%  x_{t+\lfloor\frac{c-1}{2}\rfloor}, y_0, \dots,
%  y_{t+\lfloor\frac{c-2}{2}\rfloor}).$$ DO WE NEED THAT?
\end{remark}

We also need an ideal in $I_{c,t}(\H)$ for which $x_0$ is a non-zero-divisor.

\begin{definition} Let $c\ge 1$ and $t\ge -1$. Then we write
  $A'_{c,t}$ to be the ideal in $R_{c,t}$ obtained by removing all
  minimal generators of $A_{c,t}$ which are divisible by $x_0$ (and
  take the ideal generated by no elements to be zero).
\end{definition}

\begin{example}\label{e:Ap2t}
  For example, 
\begin{eqnarray*} 
  A'_{3,2}& =& (y_0 y_1 y_2, x_3 y_0   y_1,x_2 x_3 y_0,x_1 x_2 x_3)\\
          &=& (y_0 y_1 y_2 +x_3(y_0 y_1,x_2 y_0,x_1 x_2))=
              A'_{2,2}+x_3A'_{3,1}
\end{eqnarray*}
By induction it is easy to show that $A'_{2,t}= (y_0\cdots y_t)$ for
all $t\ge -1$ (where the empty product is taken to be $1$ by
convention).
% and $x_1\cdots
% x_t\in A'_{3,t-1}$ for all $t\ge 0$.  We do induction on
% $t$.
% If $t=0$
% then the result is clear. If $t>0$,
% then
% $A'_{3,t-1}=A'_{2,t-1}R_{3,t,s-1}+x_{t}A'_{3,t-2}R_{3,t,s-1}$
% and $x_1\dots
% x_{t-1}\in A'_{3,t-2}$ by induction, so $x_1\dots, x_{t}\in
% A'_{3,t-1}$ follows.
%
\end{example}

\begin{remark}\label{r:Aprime}
  It is immediate that the conclusions of Propositions
  (\ref{p:sqfree}), (\ref{p:HFcomparison}), and
  (\ref{p:subsetcIversion}) hold for $A'_{c,t}$ and additionally
  $A'_{c,t}$ is generated in degree $t+1$ (when nonzero).  Following
  Remark (\ref{r:Act}), it is also easy to see that
  $A'_{c,t}=A'_{c-1,t}+\omega_{c,t}A'_{c,t-1}$ with initial values
  $A'_{c,-1}=R_{c,-1}=A_{c,-1}$ and $A'_{1,t}=(0)$ for $t>-1$, so that
  the conclusion of Proposition (\ref{p:intersection3}) holds.
%  analogous statement to Proposition (\ref{p:intersection3}) holds.
%  $$A'_{c,0}=(x_1, \dots, x_{\lfloor\frac{c-1}{2}\rfloor}, y_0, \dots,
%  y_{\lfloor\frac{c-2}{2}\rfloor}).$$
  Although the $A'_{c,t}$ versions of Proposition (\ref{p:minprimes})
  and Theorem (\ref{t:IisCM}) must be modified slightly (as below),
  the proofs are nearly identical---we change base cases as well as
  the statement of the induction hypothesis in each step---and hence
  are omitted.
\end{remark}

\begin{proposition}\label{p:Aprimeminprimes}
  Let $c\ge 1$ and $t\ge 0$. Then any minimal prime of $A'_{c,t}$ is
  generated by $c-1$ variables, exactly $\lfloor\frac{c}{2}\rfloor$ of
  which are from $\{y_0, \dots, y_{t+\lfloor\frac{c-2}{2}\rfloor}\}$.
\end{proposition}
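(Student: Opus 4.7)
The plan is to mirror the proof of Proposition (\ref{p:minprimes}) almost verbatim, performing double induction on $c$ and $t$, but with modified base cases that reflect the initial conditions for $A'_{c,t}$ recorded in Remark (\ref{r:Aprime}). The underlying structural reason this works is that $A'_{c,t}$ satisfies the same recursion $A'_{c,t}=A'_{c-1,t}+\omega_{c,t}A'_{c,t-1}$ as $A_{c,t}$ (and by extension as $I_{c,t}(\H)$), together with the containment $A'_{c-1,t}R_{c,t}\subseteq A'_{c,t-1}R_{c,t}$ noted in Remark (\ref{r:Aprime}).

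For the base cases, I would first handle $c=1$: here $A'_{1,t}=(0)$ for $t>-1$, whose only minimal prime is $(0)$, trivially generated by $c-1=0$ variables with $0=\lfloor 1/2\rfloor$ of them $y$'s. For $t=0$, one computes directly that $A'_{c,0}=(x_1,\dots,x_{\lfloor(c-1)/2\rfloor},y_0,\dots,y_{\lfloor(c-2)/2\rfloor})$ (by stripping the $x_0$ generator from the explicit form of $A_{c,0}$), and a quick count gives $\lfloor(c-1)/2\rfloor+\lfloor c/2\rfloor = c-1$ generators with exactly $\lfloor c/2\rfloor$ of them $y$'s, as required.

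For the inductive step, with $c\ge 2$, $t\ge 1$, and $P$ a minimal prime of $A'_{c,t}$, I split on whether $\omega_{c,t}\in P$. If $\omega_{c,t}\in P$, then $A'_{c-1,t}\subseteq P$ forces $P=QR_{c,t}+(\omega_{c,t})$ for some minimal prime $Q$ of $A'_{c-1,t}$, which by induction on $c$ has $c-2$ generators and $\lfloor(c-1)/2\rfloor$ of them $y$'s; adding $\omega_{c,t}$ bumps the total to $c-1$ and, splitting on the parity of $c$ exactly as in the proof of Proposition (\ref{p:minprimes}), produces $\lfloor c/2\rfloor$ generators that are $y$'s. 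If $\omega_{c,t}\notin P$, then $A'_{c,t-1}\subseteq P$ and a minimal prime $Q$ of $A'_{c,t-1}$ lies in $P\cap R_{c,t-1}$; using $A'_{c-1,t}R_{c,t}\subseteq A'_{c,t-1}R_{c,t}$ from Remark (\ref{r:Aprime}) we conclude $A'_{c,t}\subseteq QR_{c,t}\subseteq P$, forcing $P=QR_{c,t}$ by minimality, and the induction on $t$ gives the desired generator count directly.

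The one place that requires slightly more care than the parallel argument for $A_{c,t}$ is verifying the $c=1$ and $t=0$ base cases, since $A'_{c,t}$ is defined by deleting generators rather than by a uniform rule and one must check that the induction hypothesis pattern is truly established there. I expect no other genuine obstacle; once the base cases are pinned down, the two inductive cases transcribe line-for-line from the proof of Proposition (\ref{p:minprimes}), which is presumably why the author elected to omit the argument.
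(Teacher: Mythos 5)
Your proposal is correct and is essentially the paper's own argument: the paper omits the proof precisely because it is the proof of Proposition (\ref{p:minprimes}) with adjusted base cases and induction statement, which is exactly what you carry out, using the correct base cases $A'_{1,t}=(0)$ and $A'_{c,0}=(x_1,\dots,x_{\lfloor\frac{c-1}{2}\rfloor},y_0,\dots,y_{\lfloor\frac{c-2}{2}\rfloor})$ and the $A'$-analogue of Proposition (\ref{p:subsetcIversion}) from Remark (\ref{r:Aprime}) in the $\omega_{c,t}\notin P$ case. The parity bookkeeping and the two inductive cases transcribe as you say, so there is no gap.
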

% OMIT PROOF AFTER SUITABLE LATER CHECK.
% \begin{proof}
%   We do induction on $c$ and $t$. If $c=1$ then $A'_{1,t}=(0)$ and the
%   result is obvious. If $t=0$, then $A_{c,0}= (x_1, \dots,
%   x_{\lfloor\frac{c-1}{2}\rfloor}, y_0, \dots,
%   y_{\lfloor\frac{c-2}{2}\rfloor})$ and again the result is clear.

%   So suppose $c\ge 2$, $t\ge 1$, and $P$ is a minimal prime of
%   $$A'_{c,t}=A'_{c-1,t}R+\omega A'_{c,t-1}R.$$ So if
%   $\omega\in P$, then the prime obtained by removing $\omega$ from $P$
%   is minimal over $A'_{c-1,t}$ and hence consists of $c-2$ variables,
%   $\lfloor\frac{c-1}{2}\rfloor$ from $\{y_0, \dots,
%   y_{t+\lfloor\frac{c-3}{2}\rfloor}\}$. If $c$ is even then
%   $\omega=y_{t+\lfloor\frac{c-2}{2}\rfloor}$,
%   $\lfloor\frac{c-1}{2}\rfloor = \lfloor\frac{c}{2}\rfloor-1$, and the
%   result follows. If $c$ is odd, then
%   $\omega=x_{t+\lfloor\frac{c-1}{2}\rfloor}$ and
%   $\lfloor\frac{c-1}{2}\rfloor = \lfloor\frac{c}{2}\rfloor$ as
%   required.

%   If $\omega\not\in P$ then in fact $A'_{c,t-1}\subseteq P$ so there
%   is a prime $Q\subseteq P\cap R_{c,t-1,s}$ minimal over
%   $A'_{c,t-1}$. By induction on $t$, $Q$ has $c-1$ generators of which
%   $\lfloor\frac{c}{2}\rfloor$ are in $\{y_0, \dots,
%   y_{t-1+\lfloor\frac{c-2}{2}\rfloor}\}$, and thus it is enough to
%   show that $QR_{c,t,s}=P$. But this is immediate since
%   $A'_{c-1,t}\subseteq A'_{c,t-1}R$ by Lemma (\ref{l:subsetc}).
% \end{proof}

\begin{theorem}\label{t:Aprimect}
  Let $c\ge 1$ and $t\ge 0$. Then $R_{c,t,s}/A'_{c,t}$ is a dimension
  $2t+s+1$ Cohen-Macaulay algebra with projective dimension $c-1$.
\end{theorem}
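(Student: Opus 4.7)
The plan is to follow the template of Theorem~\ref{t:IisCM}, replacing $I_{c,t}(\H)$ by $A'_{c,t}$. From Proposition~\ref{p:Aprimeminprimes} and the elementary count $\dim R_{c,t,s} = 2t + c + s$ (sum over the $x$'s, $y$'s, and $z$'s, using $\lfloor(c-1)/2\rfloor + \lfloor(c-2)/2\rfloor = c-2$ for $c\ge 2$), we get $\dim R_{c,t,s}/A'_{c,t} = 2t + s + 1$. Hence by the Auslander-Buchsbaum formula it suffices to establish that $R/A'_{c,t}$ has projective dimension exactly $c - 1$; Cohen-Macaulayness is then automatic.

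I would proceed by double induction on $c$ and $t$. For the base cases: when $c = 1$, Remark~\ref{r:Aprime} gives $A'_{1,t} = 0$, so $R_{1,t,s}/A'_{1,t}$ is a polynomial ring with projective dimension $0 = c - 1$; and when $t = 0$ with $c \ge 2$, unwinding $A'_{c,0} = A'_{c-1,0} + \omega_{c,0}A'_{c,-1} = A'_{c-1,0} + (\omega_{c,0})$ recursively identifies $A'_{c,0}$ with a regular sequence of $c - 1$ distinct variables, a complete intersection with the required projective dimension.

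For the inductive step ($c \ge 2$, $t \ge 1$), the case $c = 2$ is special: here $A'_{c-1,t} = A'_{1,t} = 0$, so $A'_{2,t} = \omega_{2,t} A'_{2,t-1}$, and Lemma~\ref{l:multbyomega} together with induction on $t$ closes it. For $c \ge 3$, I would invoke the $A'$-analog of Proposition~\ref{p:intersection3} (guaranteed by Remark~\ref{r:Aprime}):
$$0 \to R/\omega_{c,t}A'_{c-1,t} \to R/A'_{c-1,t} \oplus R/\omega_{c,t}A'_{c,t-1} \to R/A'_{c,t} \to 0.$$
The inductive hypothesis combined with Lemma~\ref{l:multbyomega} assigns the first two modules projective dimension $c - 2$ and the third projective dimension $c - 1$, so the associated mapping cone produces a free resolution of $R/A'_{c,t}$ of length at most $\max\{(c-2)+1,\,c-1\} = c - 1$.

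The main obstacle is conceptually slight: unlike in Theorem~\ref{t:IisCM}, one need not track how non-minimal the mapping cone is in top homological degree, because the Auslander-Buchsbaum inequality combined with the Krull dimension $2t + s + 1$ forces depth to equal dimension once pd $\le c - 1$ is known, yielding Cohen-Macaulayness and projective dimension exactly $c - 1$ in one stroke. The only real bookkeeping is the base-case identification of $A'_{c,0}$, which is itself a short induction on $c$ directly from the recursion in Remark~\ref{r:Aprime}.
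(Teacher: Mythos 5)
Your proof is correct and is essentially the paper's own argument: the paper omits the proof of Theorem \ref{t:Aprimect}, stating in Remark \ref{r:Aprime} that it is nearly identical to that of Theorem \ref{t:IisCM} after changing the base cases and induction hypothesis, and your double induction on $c$ and $t$ using the $A'$-analogue of Proposition \ref{p:intersection3}, Lemma \ref{l:multbyomega}, and the mapping cone is exactly that adaptation. Your closing remark---that ${\rm pd}\le c-1$ together with the dimension count from Proposition \ref{p:Aprimeminprimes} already forces ${\rm pd}=c-1$ and Cohen-Macaulayness via Auslander--Buchsbaum---is only a mild streamlining of the corresponding step in the proof of Theorem \ref{t:IisCM}, not a different route.
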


We take here the opportunity to explore the relationship between the
$A_{c,t}$ and $A'_{c,t}$.

\begin{proposition}\label{p:disjointminprimes}
  Let $c\ge 2$ and $t\ge 0$. Then $A'_{c,t-1}R_{c,t}$ is contained in
  no minimal prime of $A_{c-1,t}R_{c,t}$.
\end{proposition}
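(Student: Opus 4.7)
The plan is double induction on $c$ and $t$, exploiting the parallel recursions $A_{c-1,t} = A_{c-2,t}R_{c-1,t} + \omega_{c-1,t}A_{c-1,t-1}R_{c-1,t}$ and $A'_{c,t-1} = A'_{c-1,t-1}R_{c,t-1} + \omega_{c,t-1}A'_{c,t-2}R_{c,t-1}$. The base cases are quick: when $t = 0$, $A'_{c,-1}R_{c,0} = R_{c,0}$ lies in no proper prime; when $c = 2$, the minimal primes of $A_{1,t} = (x_0 \cdots x_t)$ are each generated by a single $x_i$, while $A'_{2,t-1}$ either equals $R_{2,0}$ or (by Example \ref{e:Ap2t}) equals $(y_0 \cdots y_{t-1})$, which avoids every such prime. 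For the inductive step, fix a minimal prime $P = \tilde P R_{c,t}$ of $A_{c-1,t}R_{c,t}$, with $\tilde P$ a minimal prime of $A_{c-1,t}$ in $R_{c-1,t}$, and split according to whether $\omega_{c-1,t} \in \tilde P$.

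If $\omega_{c-1,t} \in \tilde P$, then following the argument in the proof of Proposition \ref{p:minprimes}, removing this generator yields a prime $\tilde P'$ in $R_{c-1,t}$ minimal over $A_{c-2,t}$. Applying the inductive hypothesis at $(c-1,t)$ produces a monomial $m \in A'_{c-1,t-1}$ with $m \notin \tilde P'$. Since $m \in R_{c-1,t-1}$ and $\omega_{c-1,t}$ has index beyond the variable range of $R_{c-1,t-1}$, we have $\omega_{c-1,t} \nmid m$, so $m \notin \tilde P$, hence $m \notin P$. Because $A'_{c-1,t-1} \subseteq A'_{c,t-1}$ via the recursion, $m$ witnesses $A'_{c,t-1}R_{c,t} \not\subseteq P$.

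If $\omega_{c-1,t} \notin \tilde P$, the same proof of Proposition \ref{p:minprimes} produces a prime $Q \subseteq R_{c-1,t-1}$ minimal over $A_{c-1,t-1}$ with $Q R_{c-1,t} = \tilde P$, so $P = Q R_{c,t}$. The inductive hypothesis at $(c,t-1)$, applied to the minimal prime $Q R_{c,t-1}$ of $A_{c-1,t-1}R_{c,t-1}$, yields a monomial $m \in A'_{c,t-2}$ with $m \notin QR_{c,t-1}$, whence $m \notin P$. The crucial observation is that the subscript of $\omega_{c,t-1}$ exceeds the variable range of $R_{c-1,t-1}$, so $\omega_{c,t-1} \notin Q$ and thus $\omega_{c,t-1} \notin P$; primality then gives $\omega_{c,t-1}m \notin P$, while $\omega_{c,t-1}m \in \omega_{c,t-1}A'_{c,t-2} \subseteq A'_{c,t-1}$ completes the argument. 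The main obstacle is simply the bookkeeping: one must verify in each case that the relevant $\omega$'s sit strictly outside the appropriate sub-rings, which is the precise mechanism that allows the witness monomial to escape $P$.
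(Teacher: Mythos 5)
Your proof is correct and follows essentially the same route as the paper: double induction on $c$ and $t$ with the same base cases ($t=0$ and $c=2$), the same two-case split according to whether $\omega_{c-1,t}$ lies in the minimal prime (drawing on the structure of minimal primes from the proof of Proposition \ref{p:minprimes}), and the same use of the recursions $A_{c-1,t}=A_{c-2,t}+\omega_{c-1,t}A_{c-1,t-1}$ and $A'_{c,t-1}=A'_{c-1,t-1}+\omega_{c,t-1}A'_{c,t-2}$ together with the observation that the relevant $\omega$'s lie outside the smaller rings. Your phrasing in terms of an explicit witness monomial is only a cosmetic difference from the paper's direct non-containment argument.
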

\begin{proof}
%  The case that $c$ is even this is easy since by Propositions
%  (\ref{p:Aminprimes}) and (\ref{p:Aprimeminprimes}) the minimal
%  primes of $A_{c-1,t}$ and $A'_{c,t-1}$ are generated
%  $\lfloor\frac{c-1}{2}\rfloor=\frac{c}{2}-1$ and
%  $\lfloor\frac{c}{2}\rfloor=\frac{c}{2}$ variables from $\{y_0,
%  \dots, y_{t-1+\lfloor\frac{c-2}{2}\rfloor}\}$ respectively.

  We do induction on $c$ and $t$. If $t=0$, then
  $A'_{c,-1}R_{c,0}=R_{c,0}$ so the result follows from Proposition
  (\ref{p:minprimes}).

  If $c=2$ and $t>0$, then the result follows from Propositions
  (\ref{p:minprimes}) and (\ref{p:Aprimeminprimes}).

  Now suppose that $c>2$ and $t>0$ and $P$ is a minimal prime of
  $A_{c-1,t}$.  As we saw in the proof of Proposition
  (\ref{p:minprimes}) either $P=Q+(\omega_{c-1,t})$ for some
  $Q\subseteq R_{c-2,t}$ minimal over $A_{c-2,t}$ or
  $P\subseteq R_{c-1,t-1}$ is minimal over $A_{c-1,t-1}$.

  If $P$ is minimal over $A_{c-1,t-1}$ then
  $A'_{c,t-2}\not\subseteq P$ by induction, $\omega_{c,t-1}\not \in P$
  since $\omega_{c,t-1}\not\in R_{c-1,t-1}$, and hence
  $\omega_{c,t-1}A'_{c,t-2}\not\subseteq P$. We conclude that
  $A'_{c,t-1}=A'_{c-1,t-1}+\omega_{c,t-1}A'_{c,t-2}\not\subseteq P$ as
  required.

  If $P=Q+(\omega_{c-1,t})$ with $Q$ minimal over $A_{c-2,t}$, then
  $A'_{c-1,t-1}\not\subseteq Q$ by induction on $c$, hence
  $A'_{c-1,t-1}\not\subseteq Q+(\omega_{c-1,t})=P$ since
  $\omega_{c-1,t}\not\in R_{c-1,t-1}$. We conclude that
  $A'_{c,t-1}=A'_{c-1,t-1}+\omega_{c,t-1}A'_{c,t-2}\not\subseteq P$ as
  required.
\end{proof}

Note that $R_{c,t}=R_{c+2,t-1}$ for $c,t \ge 0$, and thus $A_{c,t}$
and $A'_{c+2,t-1}$ are initially defined over the same ring. In fact,
more is true.

\begin{proposition}\label{p:cuptdown}
  Let $c,t\ge 0$. Then $A_{c,t}\subseteq A'_{c+2,t-1}$.
\end{proposition}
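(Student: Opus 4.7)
The plan is to proceed by double induction on $c$ and $t$, mirroring the structure of Propositions (\ref{p:sqfree}), (\ref{p:HFcomparison}), and (\ref{p:subsetcIversion}). The base cases dispatch quickly: when $c=0$ we have $A_{0,t}=0$ so the containment is vacuous, and when $t=0$ we have $A'_{c+2,-1}=R_{c+2,-1}$ is the entire ambient ring, so there is nothing to prove. In particular the $c=1$ case is handled entirely by the iteration below once $A_{0,t}=0$ kills the first summand.

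For the inductive step with $c\ge 1$ and $t\ge 1$, I would expand both sides using the iterative formulas recorded in Remarks (\ref{r:Act}) and (\ref{r:Aprime}):
$$A_{c,t}=A_{c-1,t}+\omega_{c,t}A_{c,t-1},\qquad A'_{c+2,t-1}=A'_{c+1,t-1}+\omega_{c+2,t-1}A'_{c+2,t-2}.$$
The crux of the argument is the identity $\omega_{c,t}=\omega_{c+2,t-1}$. This is a short parity computation: when $c$ is odd both variables are $x_{t+(c-1)/2}$, and when $c$ is even both are $y_{t+(c-2)/2}$, because increasing $c$ by $2$ bumps $\lfloor(c-1)/2\rfloor$ and $\lfloor(c-2)/2\rfloor$ up by exactly $1$, compensating for the drop from $t$ to $t-1$. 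A parallel check shows the ambient rings agree under the identification $R_{c,t}=R_{c+2,t-1}$, so everything is taking place in a single polynomial ring.

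With that identity in hand, two applications of the induction hypothesis give $A_{c-1,t}\subseteq A'_{c+1,t-1}$ and $A_{c,t-1}\subseteq A'_{c+2,t-2}$. Combined with the obvious containment $A'_{c+1,t-1}\subseteq A'_{c+2,t-1}$ (from the iterative formula for $A'$) and the equality $\omega_{c,t}=\omega_{c+2,t-1}$, the desired inclusion follows by summing term by term. The only real obstacle is the indexing bookkeeping: keeping straight which $x_i$ or $y_j$ is selected as $\omega$ at each recursive call, and confirming that the variable identifications line up across the two iterations. Once the parity-based equality of the $\omega$'s is verified, the induction runs entirely mechanically.
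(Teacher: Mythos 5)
Your proof is correct and follows essentially the same route as the paper's: double induction on $c$ and $t$, expanding both sides via the iterative formulas and using the identity $\omega_{c,t}=\omega_{c+2,t-1}$ together with the inductive containments $A_{c-1,t}\subseteq A'_{c+1,t-1}$ and $A_{c,t-1}\subseteq A'_{c+2,t-2}$. Your added detail on the base cases and the parity check of the $\omega$'s is exactly what the paper leaves as ``obvious.''
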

\begin{proof}
  We do induction on $c$ and $t$, the $c=0$ and $t=0$ cases being
  obvious.  So suppose $c,t>0$. By induction
  $A_{c-1,t}\subseteq A'_{c+1,t-1}$ and
  $A_{c,t-1}\subseteq A'_{c+2,t-2}$. Since
  $\omega_{c,t}=\omega_{c+2,t-1}$ it follows that
  $A_{c,t}= A_{c-1,t}+ \omega_{c,t}A_{c,t-1} \subseteq A'_{c+1,t-1}+
  \omega_{c+2,t-1}A'_{c+2,t-2} =A'_{c+2,t-1}$ as required.
\end{proof}

These facts can be used to give information about the residual of
$A_{c,t}$ in $A'_{c,t}$.

\begin{proposition}\label{p:AAprimecolon} Let $c\ge 2$ and $t\ge 0$. Then
  $A'_{c,t-1}:A_{c-1,t}=A'_{c,t-1}$.
\end{proposition}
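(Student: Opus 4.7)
My plan is to reduce to showing that no minimal prime of $A'_{c,t-1}R_{c,t}$ contains $A_{c-1,t}R_{c,t}$, and then to extract a contradiction via Proposition \ref{p:disjointminprimes} and a height comparison. The forward containment $A'_{c,t-1}\subseteq A'_{c,t-1}:A_{c-1,t}$ is automatic; for the reverse, I would work in the common ambient ring $R=R_{c,t}$ and handle the degenerate case $t=0$ (in which $A'_{c,-1}=R_{c,-1}$ is the whole ring) separately as a triviality. For $t\ge 1$, Theorem \ref{t:Aprimect} gives that $R/A'_{c,t-1}R$ is Cohen--Macaulay, hence has no embedded primes. A primary decomposition $A'_{c,t-1}R=\bigcap_P Q_P$ over its minimal primes then yields $A'_{c,t-1}R:A_{c-1,t}R=\bigcap_P(Q_P:A_{c-1,t}R)$, and each factor collapses to $Q_P$ whenever $A_{c-1,t}R\not\subseteq P$. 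It thus suffices to prove that no minimal prime $P$ of $A'_{c,t-1}R$ contains $A_{c-1,t}R$.

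Assume for contradiction that some minimal prime $P$ of $A'_{c,t-1}R$ does contain $A_{c-1,t}R$. Since $P$ is prime, it contains some minimal prime $P'$ of $A_{c-1,t}R$. Proposition \ref{p:disjointminprimes} now gives $A'_{c,t-1}R\not\subseteq P'$, so $P'\ne P$ (because $A'_{c,t-1}R\subseteq P$), whence $P'\subsetneq P$ and $\operatorname{ht}(P)>\operatorname{ht}(P')$. The contradiction will come from computing heights: Proposition \ref{p:minprimes} (applied to $A_{c-1,t}$ in $R_{c-1,t}$) and Proposition \ref{p:Aprimeminprimes} (applied to $A'_{c,t-1}$ in $R_{c,t-1}$) show that the minimal primes of each of these ideals are generated by exactly $c-1$ variables, and since $R_{c,t}$ is obtained from $R_{c-1,t}$ by adjoining $\omega_{c,t}$ and from $R_{c,t-1}$ by adjoining two fresh variables, extension preserves these generator counts. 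Thus $\operatorname{ht}(P)=c-1=\operatorname{ht}(P')$, contradicting $\operatorname{ht}(P)>\operatorname{ht}(P')$.

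The main delicacy in the plan is tracking the common ambient ring and the fresh variables that arise when extending $A'_{c,t-1}$ or $A_{c-1,t}$ to $R_{c,t}$; once that bookkeeping is in hand, the structural input from Proposition \ref{p:disjointminprimes} together with the Cohen--Macaulay property of $R/A'_{c,t-1}$ closes the argument in essentially one step, with no new induction required beyond what is already embedded in the cited propositions.
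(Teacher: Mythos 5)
Your proof is correct and follows essentially the same route as the paper: both reduce the colon computation to showing that no minimal prime of $A'_{c,t-1}$ can contain $A_{c-1,t}$, and both close the argument by combining the variable counts of Propositions (\ref{p:minprimes}) and (\ref{p:Aprimeminprimes}) with Proposition (\ref{p:disjointminprimes}). The only cosmetic difference is in the reduction step (you use Cohen--Macaulayness and primary decomposition where the paper uses squarefreeness plus prime avoidance) and in phrasing the final contradiction as a height comparison rather than forcing $P$ to be minimal over $A_{c-1,t}$; the mathematical content is the same.
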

\begin{proof} We take $t>0$ since the $t=0$ case is obvious. Let
  $\lambda$ be homogeneous such that
  $\lambda A_{c-1,t}\subseteq A'_{c,t-1}.$ Since $A'_{c,t-1}$ is a
  squarefree monomial ideal (see Remark (\ref{r:Aprime})), it is the
  intersection of its minimal primes, and thus if $\lambda\in P$ for
  all $P$ minimal over $A'_{c,t-1}$, then $\lambda\in A'_{c,t-1}$ as
  required.

  So we suppose that for each $a\in A_{c-1,t}$ there is a minimal
  prime $P_a$ of $A'_{c,t-1}$ such that $a\in P_a$. It follows that
  $A_{c-1,t}$ is contained in the union of the minimal primes of
  $A'_{c,t-1}$ and hence is contained in one of them, call it $P$, by
  Prime Avoidance.

  By Proposition (\ref{p:Aprimeminprimes}), $P$ is generated by $c-1$
  variables, and by Proposition (\ref{p:minprimes}), the same is true
  for any minimal prime of $A_{c-1,t}$. It follows that $P$ is minimal
  over $A_{c-1,t}$, but this contradicts Proposition
  (\ref{p:disjointminprimes}).
\end{proof}

\begin{proposition} \label{p:colonA} Let $c\ge 2$ and $t\ge -1$. Then
  $A_{c-2,t}:A_{c-1,t}=A_{c-2,t}$.
\end{proposition}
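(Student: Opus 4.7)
The plan is to adapt the squarefree-monomial and minimal-prime strategy used in the proof of Proposition \ref{p:AAprimecolon}. The containment $A_{c-2,t}\subseteq A_{c-2,t}:A_{c-1,t}$ is automatic, so I focus on the reverse. I would work inside the common overring $R_{c-1,t}$. Because $A_{c-2,t}$ is a squarefree monomial ideal by Proposition \ref{p:sqfree}, its extension to $R_{c-1,t}$ equals the intersection of its minimal primes, so showing $\lambda\in A_{c-2,t}$ for a homogeneous $\lambda$ with $\lambda A_{c-1,t}\subseteq A_{c-2,t}$ reduces to placing $\lambda$ inside every minimal prime of $A_{c-2,t}$.

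The key step is a contradiction argument. Suppose some minimal prime $P$ of $A_{c-2,t}$ misses $\lambda$. Then $\lambda A_{c-1,t}\subseteq P$ together with the primality of $P$ force $A_{c-1,t}\subseteq P$, so $P$ contains some minimal prime $Q$ of $A_{c-1,t}$. By Proposition \ref{p:minprimes}, applied respectively to $\mathbb H_{c-1,t}$ and $\mathbb H_{c-2,t}$, the prime $Q$ is generated by $c-1$ variables and $P$ by $c-2$ variables, so $\mathrm{ht}(Q)=c-1 > c-2 = \mathrm{ht}(P)$, contradicting $Q\subseteq P$. Since primes generated by subsets of the variables remain prime under extension and keep their variable count, the ring change from $R_{c-2,t}$ to $R_{c-1,t}$ introduces no complication.

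I expect the only fiddly point to be the degenerate cases $t=-1$ and $c=2$, where Proposition \ref{p:minprimes} does not directly apply: for $t=-1$ one has $\mathbb H_{c,-1}=0$, so both $A_{c-2,-1}$ and $A_{c-1,-1}$ are the ambient ring and the colon identity is trivial; and for $c=2$ one has $A_{0,t}=(0)$ while $A_{1,t}=(x_0\cdots x_t)\neq 0$ is a principal ideal in a domain (cf.\ Example \ref{e:IctH}), so $(0):A_{1,t}=(0)=A_{0,t}$. Handling these small cases explicitly at the outset lets the main height argument proceed cleanly for $c\ge 3$ and $t\ge 0$, which is the substantive content of the proposition.
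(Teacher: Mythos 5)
Your proof is correct, but it takes a genuinely different route from the paper's. The paper disposes of $t=-1$ trivially and, for $t\ge 0$, argues by chaining through previously established results: by Proposition (\ref{p:cuptdown}), $\lambda A_{c-1,t}\subseteq A_{c-2,t}\subseteq A'_{c,t-1}$, so the colon identity of Proposition (\ref{p:AAprimecolon}) places $\lambda$ in an $A'$-ideal contained in $A_{c-1,t}$, whence $\lambda^2\in\lambda A_{c-1,t}\subseteq A_{c-2,t}$ and squarefreeness (radicality) finishes. You instead re-run, directly on the pair $A_{c-2,t}$, $A_{c-1,t}$, the minimal-prime strategy that the paper reserves for Proposition (\ref{p:AAprimecolon}): since $A_{c-2,t}$ is radical it suffices to put $\lambda$ in every minimal prime $P$, and if $\lambda\notin P$ then primality forces $A_{c-1,t}\subseteq P$, which is impossible because by Proposition (\ref{p:minprimes}) the minimal primes of $A_{c-1,t}$ are generated by $c-1$ variables while $P$ is generated by only $c-2$; your handling of the extension of variables and of the degenerate cases ($t=-1$, and $c=2$ where $A_{0,t}=(0)$ and $A_{1,t}=(x_0\cdots x_t)\ne 0$ in a domain) is exactly what is needed for Proposition (\ref{p:minprimes}) to apply in the remaining range $c\ge 3$, $t\ge 0$. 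What each approach buys: the paper's proof is a two-line reduction to machinery it has already built (Propositions (\ref{p:cuptdown}) and (\ref{p:AAprimecolon})), while yours is self-contained, needing only Propositions (\ref{p:sqfree}) and (\ref{p:minprimes}), and is in fact simpler than the paper's proof of Proposition (\ref{p:AAprimecolon}) itself, since the height mismatch $c-1>c-2$ makes Proposition (\ref{p:disjointminprimes}) and prime avoidance unnecessary. A further small point in favor of your route is that it sidesteps the index bookkeeping in the paper's printed chain, where Proposition (\ref{p:AAprimecolon}) as invoked yields $\lambda\in A'_{c,t-1}$ although the text records $A'_{c-1,t}$.
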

\begin{proof} The proof is obvious if $t=-1$, so suppose $t\ge 0$ and
  let $\lambda$ be homogeneous such that
  $\lambda A_{c-1,t}\subseteq A_{c-2,t}$. Then
  $\lambda A_{c-1,t}\subseteq A_{c-2,t}\subseteq A'_{c,t-1}$ by
  Proposition (\ref{p:cuptdown}) and hence $\lambda\in A'_{c-1,t}$ by
  Proposition (\ref{p:AAprimecolon}). Since
  $A'_{c-1,t}\subseteq A_{c-1,t}$ we have that $\lambda\in A_{c-1,t}$,
  so $\lambda^2\in A_{c-2,t}$, and hence $\lambda\in A_{c-2,t}$ since
  $A_{c-2,t}$ is squarefree (Proposition (\ref{p:sqfree})).
\end{proof}

We can now use the $A_{c,t}$ and $A'_{c,t}$ to form a Gorenstein ideal
inside of $I_{c,t}(\H)$ with which to form the link.

\begin{definition} Let $c\ge 1$ and $t,s\ge 0$. Then we define
  $$G_{1,t,s} = (x_0\cdots x_ty_0\cdots y_{t-1}z_0\cdots
  z_{s-1})\subseteq R_{1,t,s},$$ and for $c\ge 2$,
  $$G_{c,t,s}= A_{c-1,t}R_{c,t,s}+\omega_{c,t} z_0\cdots z_{s-1}
  A'_{c,t-1}R_{c,t,s}.$$ When there can be no confusion, we write
  $\overline{z}$ to denote the product $z_0\cdots z_{s-1}$,
  so $$G_{c,t,s}= A_{c-1,t}+\omega_{c,t}\overline{z} A'_{c,t-1}.$$
\end{definition}

\begin{remark}\label{r:containment} Since
  $\omega_{c,t}\overline{z}A'_{c,t-1}\subseteq A'_{c,t}$,
  $G_{1,t,s}\subseteq A_{1,t}$, and
  $A_{c-1,t},A'_{c,t}\subseteq A_{c,t}\subseteq I_{c,t}(\H)$, (see
  Remark (\ref{r:Act})), it is immediate that
  $G_{c,t,s}\subseteq I_{c,t}(\H)R_{c,t,s}$. Similarly since
  $A_{c-1,t}$ and $\omega_{c,t}\overline{z}A'_{c,t-1}$ are squarefree
  (the latter because $A'_{c-1,t}\subseteq R_{c-1,t,0}$) it follows
  that $G_{c,t,s}$ is a squarefree monomial ideal.
\end{remark}

%\begin{lemma}\label{l:GsubsetI}
%   Let $c\in \N_{\ge 1},$ $t\in \N$, and $\H$ be an $\mathcal
%   O$-sequence. Then $ G_{c,t,s}R\subseteq I_{c,t}(\H)$.
% \end{lemma}
% \begin{proof}
%   We have $$G_{c,t,s}=A_{c-1,t}R+\omega_{c,t}\overline{z}
%   A'_{c,t-1}R,$$  $$A_{c-1,t}\subseteq A_{c,t}\subseteq
%   I_{c,t}(\H)$$ and $$\omega_{c,t}\overline{z}A'_{c,t-1} \subseteq
%   \overline{z}(\omega_{c,t}A_{c,t-1})\subseteq
%   \overline{z}A_{c,t}\subseteq I_{c,t}(\Delta\H)$$ by Lemma
%   (\ref{l:AsubsetI}), so $G_{c-1,t,s}\subseteq
%   I_{c-1,t}(\Delta\H)$. 
% \end{proof}

It is not difficult to show that $R/G_{c,t,s}$ is Gorenstein using
induction. For the sake of the exposition, we first make one
observation in a Lemma.

\begin{lemma}\label{l:Gintersection}
  Let $c\ge 2$ and $t,s\ge 0$. Then
  $$ A_{c-1,t}R_{c,t,s}\cap \omega_{c,t}
  \overline{z}A'_{c,t-1}R_{c,t,s}=\omega_{c,t} \overline{z}
  G_{c-1,t,0}R_{c,t,s}$$
\end{lemma}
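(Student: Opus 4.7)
The plan is first to eliminate the $\omega_{c,t}\overline{z}$ factor from both sides. The variables $\omega_{c,t}$ and $z_0,\dots,z_{s-1}$ appear in none of the ideals $A_{c-1,t}$, $A'_{c,t-1}$, or $G_{c-1,t,0}$---the relevant ambient rings $R_{c-1,t,0}$, $R_{c,t-1,0}$, and $R_{c-1,t,0}$ exclude them---so they act as nonzerodivisors modulo each ideal in $R_{c,t,s}$. An elementary argument then reduces the lemma to the equality $A_{c-1,t}\cap A'_{c,t-1}=G_{c-1,t,0}$ of ideals extended to $R_{c,t,s}$.

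The case $c=2$ I would handle by direct computation: $A_{1,t}=(x_0\cdots x_t)$, $A'_{2,t-1}=(y_0\cdots y_{t-1})$ (Example (\ref{e:Ap2t})), and $G_{1,t,0}=(x_0\cdots x_t\, y_0\cdots y_{t-1})$ by definition, so the intersection is immediate because the two ideals use disjoint variable sets. For $c\geq 3$ I would induct on $t$, with the base case $t=0$ a short verification using $A'_{c,-1}=R_{c,-1,0}$ together with the observation that $A_{c-1,0}$ is the homogeneous maximal ideal of $R_{c-1,0,0}$.

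For the inductive step, I would expand both ideals via the recursions $A_{c-1,t}=A_{c-2,t}+\omega_{c-1,t}A_{c-1,t-1}$ and $A'_{c,t-1}=A'_{c-1,t-1}+\omega_{c,t-1}A'_{c,t-2}$ and apply the distributive law $(I+J)\cap K=(I\cap K)+(J\cap K)$, which holds for monomial ideals because every monomial in $I+J$ lies in one of the summands. The key combinatorial identity, immediate from the parity-based definition of $\omega$, is that $\omega_{c,t-1}=\omega_{c-2,t}$. Combining this with Proposition (\ref{p:cuptdown}) (giving $A_{c-2,t}\subseteq A'_{c,t-1}$), the containment $A'_{c-1,t-1}\subseteq A_{c-1,t-1}$, the nonzerodivisor observations $\omega_{c-1,t}\notin R_{c,t-1,0}$ and $\omega_{c-2,t}\notin R_{c-1,t-1,0}$ (short index checks), and the inductive hypothesis $A_{c-1,t-1}\cap A'_{c,t-2}=G_{c-1,t-1,0}$, the intersection simplifies to
\[A_{c-2,t}+\omega_{c-1,t}A'_{c-1,t-1}+\omega_{c-1,t}\omega_{c-2,t}G_{c-1,t-1,0}.\]

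The first two summands are exactly $G_{c-1,t,0}$; it remains to absorb the third. Expanding $G_{c-1,t-1,0}=A_{c-2,t-1}+\omega_{c-1,t-1}A'_{c-1,t-2}$, the recursion for $A$ yields $\omega_{c-2,t}A_{c-2,t-1}\subseteq A_{c-2,t}$ and the recursion for $A'$ yields $\omega_{c-1,t-1}A'_{c-1,t-2}\subseteq A'_{c-1,t-1}$, which after multiplying by $\omega_{c-1,t}$ gives the desired containment. The reverse inclusion $G_{c-1,t,0}\subseteq A_{c-1,t}\cap A'_{c,t-1}$ is a short direct check using the same recursions and Proposition (\ref{p:cuptdown}). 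The main technical obstacle is the combinatorial bookkeeping: establishing the nonzerodivisor conditions requires tracking the index ranges $t+\lfloor(c-i)/2\rfloor$ for various $i$ and parities of $c$, but once the identity $\omega_{c,t-1}=\omega_{c-2,t}$ is in hand the remaining manipulations are essentially algebraic.
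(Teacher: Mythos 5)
Your proposal is correct and follows essentially the same route as the paper's proof: the same initial cancellation of the factor $\omega_{c,t}\overline{z}$, the same induction on $t$ via the recursions for $A_{c-1,t}$ and $A'_{c,t-1}$, the same key identity $\omega_{c,t-1}=\omega_{c-2,t}$, and the same appeal to Proposition (\ref{p:cuptdown}). The only difference is organizational---you package the paper's monomial-by-monomial case analysis as an application of the distributive law for monomial ideals together with the nonzerodivisor cancellations, and you treat $c=2$ separately---which is a sound and slightly cleaner presentation of the same argument.
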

\begin{proof}
  % Consider a minimal monomial generator $$\omega_{c,t}\overline{z}
  % m\in \omega_{c,t}\overline{z}G_{c-1,t,s}R_{c,t,s}$$ $$ =
  % \omega_{c,t} \overline{z}
  % (A_{c-2,t}R_{c,t,s}+\omega_{c-1,t}A'_{c-1,t-1}R_{c,t,s}).$$ Then
  % $m\in A_{c-2,t}$ or $m\in \omega_{c-1,t}A'_{c-1,t-1}$.

  % First we suppose that $m\in A_{c-2,t}$. Of course
  % $A_{c-2,t}\subseteq A_{c-1,t}$ by construction and
  % $A_{c-2,t}\subseteq A'_{c,t-1}$ by Proposition (\ref{p:cuptdown}), so
  % $\omega_{c,t}\overline{z} m\in A_{c-1,t}R_{c,t,s}\cap \omega_{c,t}
  % \overline{z}A'_{c,t-1}R_{c,t,s}$ as required.

  % If $m\in \omega_{c-1,t}A'_{c-1,t-1}$,
  % then $$\omega_{c,t}\overline{z}m \in
  % (A'_{c-2,t)+\omega_{c-1,t}A'_{c-1,t-1})R_{c,t,s} \subseteq
  %   A'_{c-1,t}R_{c,t,s} \subseteq A{c-1,t}R_{c,t,s}$$ and
  %   $\omega_{c,t}\overline{z}m \in \omega_{c,t}
  %   \overline{z}\omega_{c-1,t}A'_{c-1,t-1}\subseteq \omega_{c,t}
  %   \overline{z}A'_{c,t-1}R_{c,t,s}$.

  Note that $\omega_{c,t}\overline{z}\not\in R_{c-1,t}$,
  so $$A_{c-1,t}R_{c,t,s}\cap \omega_{c,t} \overline{z}
  A'_{c,t-1}R_{c,t,s}=\omega_{c,t}\overline{z}(A_{c-1,t}R_{c,t,s}\cap
  A'_{c,t-1}R_{c,t,s})$$ and thus it is equivalent to show that
  $$A_{c-1,t}R_{c,t}\cap A'_{c,t-1}R_{c,t}= G_{c-1,t,0}R_{c,t}.$$ 
  First we show that
  $A_{c-1,t}\cap A'_{c,t-1}\subseteq G_{c-1,t,0}R_{c,t}$ by induction
  on $t$.

If $t=0$, then 
 $$A_{c-1,0}R_{c,0}\cap
 A'_{c,-1}R_{c,0}= A_{c-1,0}R_{c,0}\cap R_{c,0}$$
 $$= A_{c-2,0}R_{c,0}+\omega_{c-1,0}A_{c-1,-1}R_{c,0}$$
 $$= A_{c-2,0}R_{c,0} +
 \omega_{c-1,0}A'_{c-1,-1}R_{c,0}=G_{c-1,0,0}R_{c,0}$$
 as required.

 Now suppose $t>0$ and we have a monomial
 $m\in A_{c-1,t}\cap A'_{c,t-1}.$ Of course,
 $A_{c-1,t}= A_{c-2,t}+\omega_{c-1,t}A_{c-1,t-1}$ and
 $A'_{c,t-1}= A'_{c-1,t-1}+\omega_{c,t-1}A'_{c,t-2}.$ If
 $m\in A_{c-2,t}$, then we are finished since
 $A_{c-2,t}\subseteq G_{c-1,t,0}$, so we suppose that
 $m\in \omega_{c-1,t}A_{c-1,t-1}$, and thus that
 $\omega_{c-1,t}\ |\ m$. If $m\in A'_{c-1,t-1}$, then
 $m\in \omega_{c-1,t}A'_{c-1,t-1}$ (since
 $\omega_{c-1,t}\not\in R_{c-1,t-1}$) which is enough since
 $\omega_{c-1,t}A'_{c-1,t-1}\subseteq G_{c-1,t,0}$.  Thus we conclude
 that
 $$m\in \omega_{c-1,t}A_{c-1,t-1}\cap
 \omega_{c,t-1}A'_{c,t-2}$$
 $$\subseteq A_{c-1,t-1}\cap A'_{c,t-2}\subseteq
 G_{c-1,t-1,0}$$
 $$= A_{c-2,t-1}+\omega_{c-1,t-1}A'_{c-1,t-2}$$ where the second to
 last inclusion is by induction. Suppose first that
 $m\in A_{c-2,t-1}$. Then $m\in \omega_{c,t-1}A_{c-2,t-1}$ since
 $\omega_{c,t-1}\ |\ m$ but $\omega_{c,t-1}\not\in R_{c-2,t-1}$.
 Noting that $\omega_{c,t-1}= \omega_{c-2,t}$ we have
 $m\in \omega_{c-2,t}A_{c-2,t-1}\subseteq A_{c-2,t}\subseteq
 G_{c-1,t,0}$
 as required. Finally, we suppose that
 $m\in \omega_{c-1,t-1}A'_{c-1,t-2}$. But
 $\omega_{c-1,t-1}A'_{c-1,t-2}\subseteq A'_{c-1,t-1}$, a case 
 already covered.

 The other direction is simpler.  We have
 $G_{c-1,t,0} = A_{c-2,t}+\omega_{c-1,t}A'_{c-1,t-1}.$ But
 $A_{c-2,t}\subseteq A_{c-1,t}, A'_{c,t-1}$, the latter by Proposition
 (\ref{p:cuptdown}), so $A_{c-2,t}\subseteq A_{c-1,t}\cap A'_{c,t-1}.$
 Also,
 $\omega_{c-1,t}A'_{c-1,t-1}\subseteq A'_{c-1,t-1},
 \omega_{c-1,t}A_{c-1,t-1}$
 and $A'_{c-1,t-1}\subseteq A'_{c,t-1}$ while
 $\omega_{c-1,t}A_{c-1,t-1}\subseteq A_{c-1,t}$ so
 $\omega_{c-1,t}A'_{c-1,t-1}\subseteq A_{c-1,t}\cap A'_{c,t-1}$ as
 well, which completes the proof.
\end{proof}

\begin{theorem}\label{t:Ggor}
  Let $c\ge 1$ and $t,s\ge 0$. Then $R/G_{c,t,s}$ is Gorenstein of
  dimension $2t+s$, projective dimension $c$, and, if $T_\bullet$ is a
  minimal free resolution of $R/G_{c,t,s}$, then $T_c$ is a rank 1
  free module generated in degree $2t+s+c$.
\end{theorem}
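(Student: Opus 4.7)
The plan is induction on $c$. For the base case $c=1$ the ideal $G_{1,t,s}$ is principal, generated by the single degree-$(2t+s+1)$ squarefree monomial $x_0\cdots x_ty_0\cdots y_{t-1}z_0\cdots z_{s-1}$ in a polynomial ring of dimension $2t+s+1$, so $R/G_{1,t,s}$ is a hypersurface Gorenstein quotient of dimension $2t+s$ and projective dimension $1$, and its minimal free resolution has $T_1=R(-(2t+s+1))$ as required.

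For the inductive step ($c\ge 2$) the mechanism is the short exact sequence
$$0\to \frac{R}{\omega_{c,t}\overline{z}G_{c-1,t,0}}\to \frac{R}{A_{c-1,t}}\oplus \frac{R}{\omega_{c,t}\overline{z}A'_{c,t-1}}\to \frac{R}{G_{c,t,s}}\to 0$$
coming from the definition of $G_{c,t,s}$ together with the intersection formula of Lemma \ref{l:Gintersection}. Theorems \ref{t:IisCM} and \ref{t:Aprimect}, together with Lemma \ref{l:multbyomega} applied to $\omega_{c,t}\overline{z}$ (a nonzerodivisor on each of the relevant quotients because it is built from variables lying outside the relevant ambient subring), show that both middle summands have projective dimension $c-1$. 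The inductive hypothesis for $G_{c-1,t,0}$ combined with Lemma \ref{l:multbyomega} yields that the kernel has projective dimension $c-1$ with top Betti module $F_{c-1}=R(-(2t+s+c))$ of rank $1$, the shift by $\deg(\omega_{c,t}\overline{z})=1+s$ carrying the degree $2t+c-1$ up to $2t+s+c$. The mapping cone on the sequence then produces a resolution of $R/G_{c,t,s}$ of length $c$ whose top position is identified with $F_{c-1}$.

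The main obstacle is showing this mapping cone is minimal at the top: a cancellation involving the distinguished generator at position $c$ in degree $2t+s+c$ would require a generator of matching degree at position $c-1$ in the resolution of either $R/A_{c-1,t}$ or $R/\omega_{c,t}\overline{z}A'_{c,t-1}$, so as to produce a unit entry in the chain map. For the first term, Theorem \ref{t:IBN} identifies the Betti diagram with that of $R/\mathfrak m^{t+1}$ in $c-1$ variables, which has regularity $t$, so generators at position $c-1$ live in degree at most $c-1+t<2t+s+c$. For the second I would prove as a sublemma (by a parallel induction on $c$ and $t$ using the recursion $A'_{c,t}=A'_{c-1,t}+\omega_{c,t}A'_{c,t-1}$ of Remark \ref{r:Aprime} and the analog of Proposition \ref{p:intersection3}) that $A'_{c,t-1}$ admits a linear resolution, giving reg $R/A'_{c,t-1}=t-1$; Lemma \ref{l:multbyomega} then bounds the generator degrees at position $c-1$ of $R/\omega_{c,t}\overline{z}A'_{c,t-1}$ by $c+t+s-1<2t+s+c$, again ruling out cancellation.

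Once top minimality is established, $T_c=R(-(2t+s+c))$ has rank $1$ and the projective dimension of $R/G_{c,t,s}$ is exactly $c$. The containment $G_{c,t,s}\subseteq A_{c,t}R_{c,t,s}$ from Remark \ref{r:containment} together with Corollary \ref{c:minprimes} yields $\dim R/G_{c,t,s}\ge 2t+s$, and since $\dim R_{c,t,s}=2t+c+s$, the Auslander-Buchsbaum formula forces depth $R/G_{c,t,s}=2t+s$, so depth equals dimension and $R/G_{c,t,s}$ is Cohen-Macaulay of dimension $2t+s$. A Cohen-Macaulay quotient with a rank-one top Betti module is Gorenstein, completing the argument.
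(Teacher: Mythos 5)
Your argument for the top of the resolution (projective dimension exactly $c$ and $T_c\cong R(-(2t+s+c))$ of rank one, via the degree comparison at position $c-1$ of the mapping cone) is a legitimately different route from the paper's, and the sublemma it needs --- that $A'_{c,t}$ has a linear resolution --- is true and provable by the induction you sketch. But the Cohen--Macaulayness step has a genuine gap. You deduce $\operatorname{depth} R/G_{c,t,s}=2t+s$ from $\operatorname{pd}=c$ by Auslander--Buchsbaum, and $\dim R/G_{c,t,s}\ge 2t+s$ from $G_{c,t,s}\subseteq A_{c,t}$, and then assert ``depth equals dimension.'' Since $\operatorname{depth}\le\dim$ always holds, a \emph{lower} bound on the dimension gives nothing; what is needed is the \emph{upper} bound $\dim R/G_{c,t,s}\le 2t+s$, equivalently that every minimal prime of $G_{c,t,s}$ has height at least $c$. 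This does not follow from having projective dimension $c$ with a rank-one last Betti module: for instance $I=(xy,xz)\subseteq k[x,y,z]$ has projective dimension $2$ and top Betti module of rank one, yet $\dim R/I=2>1=\operatorname{depth} R/I$. The paper closes exactly this point with Proposition (\ref{p:disjointminprimes}): no minimal prime of $A_{c-1,t}$ contains $\omega_{c,t}\overline{z}A'_{c,t-1}$, and since $R/A_{c-1,t}$ is Cohen--Macaulay (hence unmixed) of dimension $2t+s+1$, every prime containing $G_{c,t,s}=A_{c-1,t}+\omega_{c,t}\overline{z}A'_{c,t-1}$ strictly contains a minimal prime of $A_{c-1,t}$, forcing $\dim R/G_{c,t,s}\le 2t+s$. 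You must supply this (or an equivalent height bound); with it your depth computation gives Cohen--Macaulayness, and then Cohen--Macaulay plus type one does give Gorenstein as you say.

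For comparison, the paper orders the steps the other way: it first gets $\operatorname{depth}\ge 2t+s$ from the long exact sequence of Ext on the same short exact sequence, gets $\dim\le 2t+s$ from Proposition (\ref{p:disjointminprimes}), concludes Cohen--Macaulayness and $\operatorname{pd}=c$ by Auslander--Buchsbaum, and only then reads off that $T_c$ is a nonzero free submodule of $T'_{c-1}\cong R(-(2t+s+c))$ from the mapping cone --- thereby avoiding any minimality analysis and your extra sublemma on $A'$. Two minor further points if you keep your route: for $t=0$ the summand $R/\omega_{c,t}\overline{z}A'_{c,-1}=R/(\omega_{c,0}\overline{z})$ has projective dimension $1$, not $c-1$, so state only the bound $\operatorname{pd}\le c-1$ for the two middle terms (which is all you use); and you should note explicitly that no cancellation at position $c$ can occur against the $F_{c-2}$ block, since that component of the cone differential is the (minimal) differential of the kernel's resolution.
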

\begin{proof}
  The $c=1$ case is immediate and $c=2$ follows because (see examples
  (\ref{e:IctH}) and (\ref{e:Ap2t}))
  $G_{2,t,s}=A_{1,t}+\overline{z}y_tA'_{2,t-1} = (x_0\cdots
  x_t,z_0\cdots z_{s-1}y_0\cdots y_t)$.

  So suppose $c\ge 3$. By induction, $R_{c-1,t,0}/G_{c-1,t,0}$ is
  Gorenstein with dimension $2t$, projective dimension $c-1$, and the
  generator of the rank 1 free module at the $(c-1)^{\rm st}$ step of
  a minimal free resolution of $R_{c-1,t,0}/G_{c-1,t,0}$ has degree
  $2t+c-1$.  By Lemma (\ref{l:multbyomega}) and the discussion
  preceding it,
  $R/\omega_{c,t}\overline{z}G_{c-1,t,0}$ has depth
  $2t+s+1$, projective dimension $c-1$, and if $T'_\bullet$ is a
  minimal free resolution of
  $R/\omega_{c,t}\overline{z}G_{c-1,t,0}$, then
  $T'_{c-1}$ is rank 1 and generated in degree $2t+s+c$.

  By Proposition (\ref{l:Gintersection}),
  $\omega_{c,t}\overline{z}G_{c-1,t,0} = A_{c-1,t}\cap
  \omega_{c,t}\overline{z}A'_{c,t-1}$,
  so the long exact sequence in Ext on the short exact sequence
  $$0\to \frac{R}{\omega_{c,t}\overline{z}G_{c-1,t,0}} \to
  \frac{R}{A_{c-1,t}}\oplus\frac{R}{\omega_{c,t}\overline{z}A'_{c,t-1}}
  \to \frac{R}{G_{c,t,s}}\to 0$$
  shows that
  $${\rm dep}\frac{R}{G_{c,t,s}}\ge {\rm min}\left\{{\rm
      dep}\frac{R}{\omega_{c,t} \overline{z}G_{c-1,t,0}}-1, {\rm dep}
    \frac{R}{A_{c-1,t}}, {\rm dep}
    \frac{R}{\omega_{c,t}\overline{z}A'_{c,t-1}}\right\}$$
  $$= \min\{2t+s, 2t+s+1, 2(t-1)+s+2\}=2t+s$$ by
  Lemma (\ref{l:multbyomega}) and the discussion preceding it, as
  well as Theorems (\ref{t:IisCM}) and (\ref{t:Aprimect}). 

  Now no minimal prime of $A_{c-1,t}$ contains
  $\omega_{c,t}\overline{z}A'_{c,t-1}$ (this follows because
  $\omega_{c,t}\overline{z}\not\in R_{c-1,t}$ and by Proposition
  (\ref{p:disjointminprimes})) so
  $\dim R/G_{c,t,s}<\dim R/A_{c-1,t}=2t+s+1$. It follows that
  $R/G_{c,t,s}$ is Cohen-Macaulay of dimension $2t+s$ and projective
  dimension $c$ (the latter fact by the Auslander-Buchsbaum formula).

  So consider the mapping cone resolution obtained from the short
  exact sequence above. Note that the projective dimensions of each of
  $R/\omega_{c,t}\overline{z}G_{c-1,t,0},$ $R/A_{c-1,t}$, and
  $R/\omega_{c,t}\overline{z}A'_{c,t-1}$ is $c-1$ (again see Lemma
  (\ref{l:multbyomega}) and the comments preceding it, as well as
  Theorems (\ref{t:IisCM}) and (\ref{t:Aprimect})), but the projective
  dimension of $R/G_{c,t,s}$ is $c$. It follows by the mapping cone
  construction that the last term in a minimal free resolution of
  $R/G_{c,t,s}$ is a non-zero free submodule of $T'_{c-1}$, a rank 1
  free module generated in degree $2t+s+c$, and we are finished.
\end{proof}

\begin{corollary}\label{c:ellG}
  Let $c\ge 1$ and $t,s\ge 0$. Then
  $\ell(\Delta^{2t+s}H(R/G_{c,t,s})) =2t+s$.
\end{corollary}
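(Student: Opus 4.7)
The plan is to pass to an Artinian Gorenstein reduction and read off its socle degree from the end of its minimal free resolution.

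First, by Theorem~\ref{t:Ggor}, $R/G_{c,t,s}$ is Cohen-Macaulay of dimension $2t+s$. Since $k$ is infinite, a sufficiently general sequence of $2t+s$ linear forms is regular on $R/G_{c,t,s}$, and reducing modulo it produces an Artinian algebra $A$ in a polynomial ring $\bar R$ in the remaining $c$ variables. The standard relation between the Hilbert function of a graded Cohen-Macaulay module and its Artinian reduction gives $H(A) = \Delta^{2t+s} H(R/G_{c,t,s})$, so the corollary is equivalent to showing that $\ell(H(A)) = 2t+s$.

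Next, I would note that $A$ is Artinian Gorenstein, since the Gorenstein property descends through regular sequences. Moreover, tensoring the minimal free resolution of $R/G_{c,t,s}$ over $R$ with $\bar R$ preserves minimality and the graded Betti numbers, yielding a minimal free resolution of $A$ over $\bar R$. Theorem~\ref{t:Ggor} then identifies the last term of this resolution as $\bar R(-(2t+s+c))$, a rank one free module.

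Finally, I would invoke the standard fact that for any Artinian Gorenstein quotient $\bar R/J$ of a polynomial ring $\bar R$ in $c$ variables, if the rank one free module at the end of the minimal free resolution is $\bar R(-D)$, then the socle degree of $\bar R/J$ equals $D-c$ (a consequence of local duality, or equivalently of the symmetry of the Hilbert series together with the identification of top Tor with the socle). Applying this with $D = 2t+s+c$ gives socle degree $2t+s$, and hence $\ell(H(A)) = 2t+s$, as required. There is no real obstacle here; the corollary is essentially a numerical unpacking of Theorem~\ref{t:Ggor}.
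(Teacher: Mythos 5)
Your proposal is correct and follows essentially the same route as the paper: pass to an Artinian reduction (which preserves the graded Betti numbers), note it is Gorenstein, and read the socle degree $2t+s$ off the rank one last term $\bar R(-(2t+s+c))$ supplied by Theorem~\ref{t:Ggor}. Your write-up just makes explicit the standard facts (regular sequence of general linear forms, Tor$_c$ versus socle) that the paper invokes as ``well known.''
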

\begin{proof}
 Since $R/G_{c,t,s}$ is dimension $2t+s$ Cohen-Macaulay, 
  $\Delta^{2t+s}\H(R/G_{c,t,s})$ is the Hilbert function of an
  Artinian reduction of $R/G_{c,t,s}$, say $S$.  Of course $S$ is
  Gorenstein with the same graded Betti numbers as $R/G_{c,t,s}$ (as
  is well known). But in the dimension zero case,
  $\ell(\Delta^{2t+s}H(R/G_{c,t,s}))=\ell(H(S))$ equals the socle
  degree of $S$ which is seen to be $2t+s$ by Theorem (\ref{t:Ggor}).
\end{proof}

\section{A Gorenstein ideal with Hilbert function
  $\H$}\label{s:gorenstein}

We now have all the pieces required to define a Gorenstein ideal with
Hilbert function $\H$.

\begin{definition}
  Given an SI-sequence $\H$, let $c=H(1)$, $t=\ell(\Delta\H)$, and
  $s=\ell(\H)-2t+1$. If $c=0$, then $t=0$, $s=1$, and we let
  $J_0(\H)=0\subseteq R_{0,0,1}=k[z_0]$. If $c=1$, then $t=0$,
  $s\ge 2$, and we let
  $J_1(\H)=(z_0\cdots z_{s-1})\subseteq k[x_0, z_0, \dots, z_{s-1}]$.

 Finally, for
  $c\ge 2$, we let $J_c(\H)$ be the ideal of $R_{c,t,s}$
  $$J_c(\H)= I_{c-1,t}(\Delta\H)+G_{c-1,t,s}:I_{c-1,t}(\Delta\H).$$
\end{definition}

Recall the classic result of Peskine-Szpiro \cite{PeskineSzpiro}.

\begin{theorem}\label{t:PeskineSzpiro}
  Let $G\subsetneq I$ be ideals of a ring $R$ such that $R/G$ is
  Gorenstein and $R/I$ is Cohen-Macaulay with $\dim(R/I)=d$. Then
  $R/(G:I)$ is Cohen-Macaulay with $\dim(R/(G:I))=d$. Additionally, if
  $I\cap (G:I)=G$ and $I$ and $(G:I)$ share no minimal primes, then
  $R/(I+(G:I))$ is Gorenstein of dimension $d-1$.
\end{theorem}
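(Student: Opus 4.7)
The plan is a standard linkage argument built around the canonical module. In the classical Peskine--Szpiro setup one assumes $R/G$ and $R/I$ have the same dimension $d$ (which is the relevant case in the paper's application, where $R/G_{c-1,t,s}$ and $R/I_{c-1,t}(\Delta\H)$ both have dimension $2t+s$ by Theorems~\ref{t:Ggor} and~\ref{t:IisCM}); I will work under that convention. The crucial identification is
$$(G:I)/G \;\cong\; \mathrm{Hom}_R(R/I,\,R/G),$$
where a class $a+G$ on the left corresponds to multiplication by $a$. Since $R/G$ is Gorenstein, it is its own dualizing module (up to a degree shift), so standard duality for Cohen--Macaulay quotients of Gorenstein rings identifies the Hom above with the canonical module $\omega_{R/I}$.

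Because $R/I$ is Cohen--Macaulay of dimension $d$, its canonical module $\omega_{R/I}$ is Cohen--Macaulay of dimension $d$ as well. Applying this to the short exact sequence
$$0 \to (G:I)/G \to R/G \to R/(G:I) \to 0,$$
whose two outer terms have depth at least $d$, a depth chase (equivalently, the long exact sequence in local cohomology) yields $\mathrm{depth}\,R/(G:I) \ge d$. Since trivially $\dim R/(G:I) \le \dim R/G = d$, we conclude $R/(G:I)$ is Cohen--Macaulay of dimension $d$.

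For the additional claim, I would start from the Mayer--Vietoris sequence
$$0 \to R/(I \cap (G:I)) \to R/I \oplus R/(G:I) \to R/(I+(G:I)) \to 0.$$
The hypothesis $I \cap (G:I) = G$ replaces the leftmost term by $R/G$. A depth chase using that the first two terms are Cohen--Macaulay of dimension $d$ yields $\mathrm{depth}\,R/(I+(G:I)) \ge d-1$, while the assumption that $I$ and $(G:I)$ share no minimal primes forces $\dim R/(I+(G:I)) \le d-1$. Hence this quotient is Cohen--Macaulay of dimension $d-1$.

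To finish with Gorensteinness, I would compute the canonical module of $R/(I+(G:I))$ by dualizing the Mayer--Vietoris sequence into $R/G$. Using the linkage isomorphism from the first paragraph in both directions, namely $(G:I)/G \cong \omega_{R/I}$ and symmetrically $I/G \cong \omega_{R/(G:I)}$, the dualized sequence exhibits $\omega_{R/(I+(G:I))}$ as cyclic, which forces $R/(I+(G:I))$ to be Gorenstein. The main obstacle is the bookkeeping of degree shifts in the canonical module computation and verifying the symmetric identification $I/G \cong \omega_{R/(G:I)}$ (which itself requires that the double link $(G:(G:I))$ recovers $I$, a consequence of the first part applied twice). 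The rest is routine depth-chase and duality.
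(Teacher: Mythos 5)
First, a remark on the comparison itself: the paper does not prove this statement at all---it is quoted as a classical result of Peskine--Szpiro---so there is no internal argument to measure you against. Your decision to work under the convention $\dim R/G=\dim R/I=d$ is the right reading (and in fact necessary: $G=(0)\subsetneq I=(x)$ in $k[x,y]$ shows the statement fails as literally written), and your overall framework, linkage via $(G:I)/G\cong \mathrm{Hom}_R(R/I,R/G)\cong\omega_{R/I}$ together with canonical-module duality over the Gorenstein quotient, is the standard and correct one.

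However, the first and central step has a genuine gap. From the exact sequence $0\to (G:I)/G\to R/G\to R/(G:I)\to 0$, knowing that the submodule $(G:I)/G\cong\omega_{R/I}$ and the middle term $R/G$ both have depth $d$ yields only $\mathrm{depth}\,R/(G:I)\ge d-1$ by the depth lemma: in the local cohomology long exact sequence, $H^{d-1}_{\mathfrak m}(R/(G:I))$ is the kernel of $H^{d}_{\mathfrak m}((G:I)/G)\to H^{d}_{\mathfrak m}(R/G)$, and no pure depth count makes that kernel vanish. The Cohen--Macaulayness of the link is exactly the non-formal content of the theorem and needs a duality input you have not used at this point: since $R/I$ is a maximal Cohen--Macaulay module over the Gorenstein ring $S=R/G$, one has $\mathrm{Ext}^1_S(R/I,S)=0$, so dualizing $0\to I/G\to S\to R/I\to 0$ into $S$ identifies $R/(G:I)\cong \mathrm{Hom}_S(I/G,S)=\omega_{I/G}$, which is maximal Cohen--Macaulay because $I/G$ is (by a depth chase on that same sequence); equivalently, this Ext-vanishing/double-duality is what forces your connecting map to be injective. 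The same input---and not merely ``the first part applied twice''---is what gives the double link $G:(G:I)=I$ that you correctly flag as needed later. Once these duality facts are inserted, your Mayer--Vietoris dualization for the Gorenstein half is the standard argument and goes through (with the routine facts that $\mathrm{Hom}_S(S/(\bar I+\bar J),S)=0$, that $\mathrm{Ext}^1_S$ of it computes its canonical module, and that a cyclic canonical module of a Cohen--Macaulay ring forces Gorenstein). So: right route, but the assertion $\mathrm{depth}\,R/(G:I)\ge d$ is not justified as written and is where the real work of Peskine--Szpiro lives.
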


\begin{theorem}\label{t:JisGor}
  Given an SI-sequence $\H$, $c=H(1)$, $t=\ell(\Delta\H)$ and
  $s=\ell(\H)-2t+1$, $R_{c,t,s}/J_c(\H)$ is a dimension $2t+s$
  Gorenstein $k$-algebra.
\end{theorem}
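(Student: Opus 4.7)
The argument is a direct application of the Peskine-Szpiro linkage theorem (Theorem \ref{t:PeskineSzpiro}). For $c = 0$ we have $J_0(\H) = 0 \subseteq k[z_0] = R_{0,0,1}$, so the quotient is the polynomial ring $k[z_0]$, Gorenstein of dimension $1 = 2t + s$. For $c = 1$, $J_1(\H) = (z_0 \cdots z_{s-1})$ is a principal ideal generated by a non-zero-divisor, giving a complete intersection of dimension $s = 2t + s$. So assume $c \geq 2$.

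Set $R = R_{c,t,s}$, $G = G_{c-1,t,s}R$, and $I = I_{c-1,t}(\Delta\H)R$, so that $J_c(\H) = I + (G:I)$ by definition. Since $R_{c,t,s}$ is obtained from $R_{c-1,t,s}$ by adjoining exactly one polynomial variable (one $x$ or one $y$, according to the parity of $c$), Theorems \ref{t:Ggor} and \ref{t:IisCM} together with the standard preservation of Gorenstein-ness and Cohen-Macaulay-ness under polynomial extension give $R/G$ Gorenstein and $R/I$ Cohen-Macaulay, both of dimension $2t + s + 1$ and hence of codimension $c - 1$ in $R$. The containment $G \subseteq I$ follows from Remark \ref{r:containment}, since $\Delta\H \leq \mathbb H_{c-1,t}$ (which holds because $t \geq \ell(\Delta\H)$). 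Strictness is ensured by a Hilbert-function comparison: by Corollaries \ref{c:IctHHilbertfunction} and \ref{c:ellG}, the $(2t+s+1)$-fold differences of the Hilbert functions of $R/G$ and $R/I$ have respective socle degrees $2t + s$ and $t$, and these differ since $2t + s \geq t + 1$.

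The remaining hypotheses of Theorem \ref{t:PeskineSzpiro} are $I \cap (G:I) = G$ and disjointness of the minimal primes of $I$ and $G:I$. Both $G$ and $I$ are squarefree monomial ideals, hence radical and equal to the intersections of their minimal primes. Any minimal prime $P$ of $I$ contains $G$ and hence contains some minimal prime $P'$ of $G$; since $\mathrm{codim}(P) = c - 1 = \mathrm{codim}(P')$ and $P' \subseteq P$, we must have $P' = P$. Thus $\mathrm{Min}(I) \subseteq \mathrm{Min}(G)$, and the radical decomposition of $G$ splits as $G = I \cap J$, with $J = \bigcap_{P \in \mathrm{Min}(G) \setminus \mathrm{Min}(I)} P$. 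I would then identify $G:I = J$ directly: the inclusion $J \subseteq G:I$ follows from $IJ \subseteq I \cap J = G$, and conversely, for any $x \in G:I$ and any $P \in \mathrm{Min}(G) \setminus \mathrm{Min}(I)$, Prime Avoidance (in the style of Proposition \ref{p:AAprimecolon}) produces $a \in I \setminus P$, and $xa \in xI \subseteq G \subseteq P$ combined with $a \notin P$ forces $x \in P$, so $x \in J$. Both liaison conditions now follow: $I \cap (G:I) = I \cap J = G$, and $\mathrm{Min}(G:I) = \mathrm{Min}(J) = \mathrm{Min}(G) \setminus \mathrm{Min}(I)$ is disjoint from $\mathrm{Min}(I)$.

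The main obstacle is the identification $G:I = J$, but the argument above is short once radicality is invoked. With all hypotheses of Theorem \ref{t:PeskineSzpiro} in hand, its conclusion is that $R/J_c(\H) = R/(I + (G:I))$ is Gorenstein of dimension $(2t + s + 1) - 1 = 2t + s$, as required.
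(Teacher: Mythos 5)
Your proof is correct, and while the outer skeleton is necessarily the same as the paper's (both reduce to Peskine--Szpiro, Theorem \ref{t:PeskineSzpiro}, and must verify $I\cap(G:I)=G$ and disjointness of minimal primes), your verification of those two liaison hypotheses is genuinely different. The paper handles them with two short tricks that use only squarefree-ness: a monomial $m\in I\cap(G:I)$ satisfies $m^2\in mI\subseteq G$, hence $m\in G$; and a shared minimal prime is written as $P=(I:x)=((G:I):y)$, leading to a contradiction via $xyP^2\subseteq G$. You instead exploit that $G$ and $I$ are radical and (being Cohen--Macaulay of the same codimension $c-1$) unmixed, deduce $\mathrm{Min}(I)\subseteq\mathrm{Min}(G)$ by a height count, and identify the link explicitly: $G:I=J$ where $J$ is the intersection of the minimal primes of $G$ not lying over $I$. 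This is the classical geometric picture of linkage for reduced equidimensional unions of linear subspaces, and it buys more than the paper proves at this point --- an explicit primary description of $G:I$ and of its minimal primes --- at the cost of invoking unmixedness of CM quotients, whereas the paper's argument needs nothing beyond squarefree-ness and works without ever discussing $\mathrm{Min}(G)$. You also obtain the strict inclusion $G\subsetneq I$ by comparing socle degrees ($2t+s$ versus $t$, via Corollaries \ref{c:ellG} and \ref{c:IctHHilbertfunction}), where the paper extracts it from the minimal-prime statement; your route is fine and in fact anticipates the Davis--Geramita comparison used later in Corollary \ref{c:IandJ}. One small tightening: ``Prime Avoidance'' is not what produces $a\in I\setminus P$ for $P\in\mathrm{Min}(G)\setminus\mathrm{Min}(I)$; you only need $I\not\subseteq P$, which follows because $I\subseteq P$ would force $P$ to contain, and by equality of heights to equal, a minimal prime of $I$, contradicting $P\notin\mathrm{Min}(I)$ --- a one-line fix using facts you already established, not a gap.
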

\begin{proof}
  The result it obvious if $c=0,1$. So let $c\ge 2$. By Theorem
  (\ref{t:IisCM}) and the comments before Lemma (\ref{l:multbyomega}),
  $R/I_{c-1,t}(\Delta\H)$ is Cohen-Macaulay of dimension
  $2t+s+1$. Furthermore, $G_{c-1,t,s}\subseteq I_{c-1,t}(\Delta\H)$ as
  observed in Remark (\ref{r:containment}). So by Theorem
  (\ref{t:PeskineSzpiro}) it is enough to show that
  $I_{c-1,t}(\Delta\H)\cap (G_{c-1,t,s}:I_{c-1,t}(\Delta\H))=
  G_{c-1,t,s}$
  and $I_{c-1,t}(\Delta\H)$ and $G_{c-1,t,s}$ share no minimal primes
  (which shows $G_{c-1,t,s}\subsetneq I_{c-1,t}(\Delta\H)$).  This is
  a subscript-free observation, so we use $I$ and $G$. If $m$ is a
  monomial in $I\cap (G:I)$, then $m^2\in mI\subseteq G$, but $G$ is
  squarefree (Remark (\ref{r:containment})) so $m\in G$ and thus
  $I\cap (G:I)\subseteq G$ as required. The other inclusion is
  obvious.

  Now suppose that $I$ and $(G:I)$ share a minimal prime. Then there
  is a $P$ prime and $x\not\in I$, $y\not\in (G:I)$ such that
  $P=(I:x)=((G:I):y)$. Of course $x\not\in P$, else
  $x^2\in xP\subseteq I \implies x\in I$ since $I$ is squarefree, a
  contradiction. But $xyP^2=xPyP\subseteq I(G:I)\subseteq G$ and hence
  $xyP\subseteq G$ since $G$ is squarefree. Thus
  $xyI\subseteq xyP\subseteq G$ implies $x\in ((G:I):y)= P$, a
  contradiction.

  % Now suppose that $Q$ is a minimal prime of $(G:I)$, so
  % $Q=((G:I):y)$ for some $y\not\in (G:I)$. If $I\subseteq Q$, then
  % $(G:I)=(I\cap
  % (G:I)):I= (G:I):I$ implies $yI\subseteq (G:I)$, that is $y\in
  % (G:I):I=(G:I)$, a contradiction. Thus $I$ is contained in no
  % minimal
  % prime of $G:I$.
\end{proof}

In order to determine the Hilbert function and graded Betti numbers of
$J_c(\H)$ we need the following Theorem from Davis and Geramita
\cite{DavisGeramita}.

\begin{theorem}[3 in \cite{DavisGeramita}]\label{t:DavisGer}
  Let $G\subseteq R$ be a Gorenstein ideal of dimension $d$ and
  $G\subseteq I$ be such that $R/I$ is dimension $d$ and
  Cohen-Macaulay. Then
  $$\ell(\Delta^d H(R/G)) = \ell(\Delta^d H(R/I))+\min\{\lambda\ |\
  G_\lambda\ne (G:I)_\lambda\}.$$
\end{theorem}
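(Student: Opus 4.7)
The plan is to reduce to the Artinian case and then invoke Matlis duality for the Gorenstein quotient $R/G$. Since $k$ is infinite and $R/(G:I)$ is Cohen-Macaulay of dimension $d$ by Peskine-Szpiro (Theorem \ref{t:PeskineSzpiro}), I can choose $d$ sufficiently general linear forms $\ell_1,\dots,\ell_d$ that simultaneously form a regular sequence on $R/G$, $R/I$, and $R/(G:I)$. Quotienting by $(\ell_1,\dots,\ell_d)$ preserves the colon relation in the quotient and replaces each $\Delta^d H$ by the ordinary Hilbert function of the Artinian reduction. So I may assume $A := R/G$ is Artinian Gorenstein of socle degree $e = \ell(\Delta^d H(R/G))$, and the goal becomes to show $\ell(H(R/I)) = e - \lambda$ with $\lambda = \min\{j : G_j \ne (G:I)_j\}$.

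Next, the short exact sequence $0 \to I/G \to R/G \to R/I \to 0$ gives
$$H(R/I, i) = H(R/G, i) - H(I/G, i).$$
Writing $\bar I = I/G$ and $\bar J = (G:I)/G$ for the corresponding ideals of $A$, the definition of the colon yields $\bar J = (0 :_A \bar I)$, and Matlis duality for the Artinian Gorenstein ring $A$ (via $A \cong \mathrm{Hom}_k(A,k)(-e)$ as graded $A$-modules) supplies the double-annihilator identity $\bar I = (0 :_A \bar J)$ together with the dimension formula $\dim_k (0 :_A \bar J)_i = \dim_k (A/\bar J)_{e-i}$. Hence $H(I/G, i) = H(R/(G:I),\, e-i)$.

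I finish by substituting this back and using the Gorenstein symmetry $H(R/G, i) = H(R/G, e-i)$: setting $j = e-i$,
$$H(R/I, e-j) = H(R/G, j) - H(R/(G:I), j) = H((G:I)/G, j),$$
which is positive precisely when $(G:I)_j \ne G_j$. The largest $i$ with $H(R/I, i) > 0$ is therefore $e - \lambda$, and rearranging gives the claimed identity.

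The main obstacle I anticipate is the Artinian reduction step, where one must produce a single sequence of $d$ linear forms that is regular on all three of $R/G$, $R/I$, and $R/(G:I)$ and for which the colon is preserved after passing to the quotient. With $k$ infinite this follows from standard prime-avoidance arguments applied to the finitely many associated primes of these Cohen-Macaulay modules, but it is the one point in the argument where one has to be careful; the rest is bookkeeping with Gorenstein symmetry and Matlis duality.
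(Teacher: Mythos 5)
The paper itself offers no proof of this statement (it is quoted from Davis--Geramita), so there is nothing internal to compare against; your outline is the standard liaison-theoretic argument (Artinian reduction, then Matlis duality in the Artinian Gorenstein reduction), and the duality and symmetry bookkeeping in the second half is correct. The genuine gap is exactly the step you flag and then dismiss: prime avoidance over an infinite field does produce one sequence of linear forms $\ell_1,\dots,\ell_d$ that is regular on $R/G$, $R/I$, and $R/(G{:}I)$ simultaneously, but it does \emph{not} give ``the colon relation is preserved in the quotient.'' Writing $\mathbb{L}=(\ell_1,\dots,\ell_d)$ and $A'=R/(G+\mathbb{L})$, your argument needs two facts that regularity alone does not supply: (i) the image of $(G{:}I)$ in $A'$ is \emph{all} of $0:_{A'}\bar I$ (equivalently, $(G{:}I)+\mathbb{L}$ agrees degreewise with $(G+\mathbb{L}):(I+\mathbb{L})$), and (ii) the number $\lambda=\min\{j : G_j\ne (G{:}I)_j\}$, which the theorem computes in $R$ \emph{before} reduction, is unchanged after reduction. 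Without (i), $0:_{A'}\bar I$ could a priori have elements in degrees below $\lambda$, and your duality computation would only bound $\ell(\Delta^d H(R/I))$ rather than compute it; without (ii), even the easy containment does not locate the initial degree correctly, since an element of $(G{:}I)_\lambda\setminus G_\lambda$ might die in $A'$.

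The standard way to close this is not prime avoidance but the Cohen--Macaulay hypotheses themselves: $R/I$ is a maximal Cohen--Macaulay module over the Gorenstein ring $R/G$, so $\mathrm{Ext}^i_{R/G}(R/I,R/G)=0$ for $i>0$. Applying $\mathrm{Hom}_{R/G}(R/I,-)$ to $0\to R/G\xrightarrow{\ \ell\ }R/G\to R/(G+(\ell))\to 0$ and using $\mathrm{Hom}_{R/G}(R/I,R/G)\cong (G{:}I)/G$ shows that forming the colon commutes with reduction modulo each $\ell_i$, which gives (i) after iterating; similarly, $\mathrm{Tor}_1^R(R/(G{:}I),R/(\ell))=0$ yields $(G{:}I)\cap(G+(\ell))=G+\ell\,(G{:}I)$, and since $(G{:}I)_{\lambda-1}=G_{\lambda-1}$ this keeps the image of $(G{:}I)/G$ nonzero in degree $\lambda$, giving (ii). With that lemma supplied, the remainder of your proof --- Gorenstein symmetry of $H(A')$ about the socle degree $e$, the identity $\dim_k(0:_{A'}\bar I)_j=\dim_k(A'/\bar I)_{e-j}$, and reading off the top nonzero degree of $H(A'/\bar I)$ as $e-\lambda$ --- is correct and does recover the Davis--Geramita formula.
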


We will make use of Davis and Geramita's Theorem as follows.

% \begin{corollary}\label{l:GcolonI}
%   Let $\H$ be an SI-sequence, $c\ge\H(1)$, $t=\ell{\H}$, and $s\ge 1$.
%   Then $(G_{c-1,t,s}:I_{c-1,t}(\Delta\H))_\lambda =(A_{c-2,t})_{\lambda}$
%   for all $\lambda<t+s$.
% \end{corollary}
% \begin{proof}
% %  It is enough to show that
% %  $\Delta^{2t+s}H(R/(G_{c-1,t,s}:I_{c-1,t}(\Delta\H)),\lambda) =
% %  \Delta^{2t+2}H(R/A_{c-2,t},\lambda)$
% %  for $\lambda\le t+s$. 
%   Let $d=2t+s$. Then by Corollary (\ref{c:ellGHF}), have
%   $\ell(\Delta^d H(R/G_{c-1,t,s}))=2t+s$ and
%   $\ell(\Delta^{d+1}H(R/I_{c-1,t}(\H))=t$ whence
%   $\min\{\lambda\ |\ G_\lambda\ne (G:I)_\lambda\}=2t+s-t=t+s.$ We
%   conclude that
%   $(G_{c-1,t,s}:I_{c-1,t}(\Delta\H))_\lambda=(G_{c-1,t,s})_\lambda=
%   (A_{c-1,t}+\omega_{c,t}\overline{z}A'_{c,t-1})_\lambda=
%   (A_{c-1,t})_\lambda$
%   for $\lambda <t+s$ as
%   required. 
% \end{proof}

\begin{corollary}\label{c:IandJ}
  Let $\H$ be an SI-sequence with $\H(1)\ge 1$, $c=\H(1)$,
  $t=\ell(\Delta\H),$ and $s = \ell(\H)-2t+1$.  Then
%  $(G_{c-1,t,s})_\lambda = (G_{c-,t,s}:I_{c-1,t}(\Delta\H))_\lambda$
% and
  $(J_c(\H))_\lambda = (I_{c-1,t}(\Delta\H))_\lambda$ for
  $\lambda<t+s$.
\end{corollary}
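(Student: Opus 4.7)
The key observation is that this corollary is exactly what Davis--Geramita's Theorem \ref{t:DavisGer} is designed to compute. Writing $I = I_{c-1,t}(\Delta\H)$ and $G = G_{c-1,t,s}$ (both extended to $R := R_{c,t,s}$ when needed), the defining identity $J_c(\H) = I + (G:I)$ combined with the containment $G \subseteq I$ from Remark \ref{r:containment} reduces the claim to the equality $(G:I)_\lambda = G_\lambda$ for $\lambda < t+s$; indeed, once this is known, $(J_c(\H))_\lambda = I_\lambda + (G:I)_\lambda = I_\lambda + G_\lambda = I_\lambda$ because $G_\lambda \subseteq I_\lambda$.

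To apply Theorem \ref{t:DavisGer}, I would first confirm its hypotheses in $R$. The ring $R_{c,t,s}$ is obtained from $R_{c-1,t,s}$ by adjoining a single variable, so the polynomial-extension remarks recorded just before Lemma \ref{l:multbyomega}, combined with Theorems \ref{t:IisCM} and \ref{t:Ggor}, give that $R/I$ is Cohen--Macaulay of dimension $d = 2t+s+1$ and that $R/G$ is Gorenstein of the same dimension $d$; strict inclusion $G \subsetneq I$ was already established inside the proof of Theorem \ref{t:JisGor}.

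Theorem \ref{t:DavisGer} then yields
$$\min\{\lambda : G_\lambda \ne (G:I)_\lambda\} = \ell(\Delta^d H(R/G)) - \ell(\Delta^d H(R/I)).$$
Because adjoining a variable leaves graded Betti numbers (and hence the Hilbert function of any Artinian reduction) unchanged, Corollary \ref{c:ellG} yields $\ell(\Delta^d H(R/G)) = \ell(\Delta^{2t+s} H(R_{c-1,t,s}/G_{c-1,t,s})) = 2t+s$, while Corollary \ref{c:IctHHilbertfunction} yields $\ell(\Delta^d H(R/I)) = \ell(\Delta\H) = t$. Their difference is $t+s$, so $G_\lambda = (G:I)_\lambda$ for every $\lambda < t+s$, completing the argument. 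There is no serious obstacle here beyond the bookkeeping of identifying the ideals with their extensions along the polynomial extension $R_{c-1,t,s} \hookrightarrow R_{c,t,s}$; all the invariants needed (Cohen--Macaulayness, Gorenstein-ness, and the $\ell$-values of Artinian reductions) are stable under this extension.
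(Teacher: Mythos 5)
Your argument is correct and is essentially the paper's own proof: both apply Davis--Geramita (Theorem \ref{t:DavisGer}) to $G_{c-1,t,s}\subseteq I_{c-1,t}(\Delta\H)$, using Corollaries \ref{c:ellG} and \ref{c:IctHHilbertfunction} to get $\min\{\lambda\ |\ G_\lambda\ne(G:I)_\lambda\}=t+s$ and then $(J_c(\H))_\lambda=I_\lambda$ from $G\subseteq I$; the paper merely works over $R_{c-1,t,s}$ with $d=2t+s$ instead of your one-variable extension. The only point to add is the case $c=1$ (allowed by the hypothesis $\H(1)\ge 1$), where $J_1(\H)=(z_0\cdots z_{s-1})$ is defined directly rather than as $I+(G:I)$, so your reduction does not literally apply there --- but that case is immediate since the generator has degree $s=t+s$ and $I_{0,t}(\Delta\H)=0$.
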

\begin{proof}
  The result is obvious if $c=1$. If $c=2$ then
  $I_{1,t}(\Delta\H)=(x_0\cdots x_t)$ (see example (\ref{e:IctH})),
  $J_2(\H) = (x_0\cdots x_t, y_0\cdots y_{t-1}z_0\cdots z_{s-1})$, and
  the result follows.

  So let $c>2$ and write $d=2t+s$.  Then
  $\ell(\Delta^d H(R_{c-1,t,s}/G_{c-1,t,s}))=2t+s$ and
  $\ell(\Delta^{d}H(R_{c-1,t,s}/I_{c-1,t}(\Delta\H))=t$ by Corollaries
  (\ref{c:ellG}) and (\ref{c:IctHHilbertfunction}), whence by Theorem
  (\ref{t:DavisGer})
  $$\min\{\lambda\ |\ (G_{c-1,t,s})_\lambda\ne
  (G_{c-1,t,s}:I_{c-1,t}(\Delta\H))_\lambda\}=t+s.$$
  We conclude that
  $$(G_{c-1,t,s}:I_{c-1,t}(\Delta\H))_\lambda=(G_{c-1,t,s})_\lambda=
  (A_{c-1,t}+\omega_{c,t}\overline{z}A'_{c,t-1})_\lambda=
  (A_{c-1,t})_\lambda$$ for $\lambda <t+s.$  So
  $$(J_c(\H))_\lambda = (I_{c-1,t}(\Delta\H))_\lambda +
  (G_{c-1,t,s}:I_{c-1,t}(\Delta\H))_\lambda$$
  $$ = (I_{c-1,t}(\Delta\H))_\lambda + (A_{c-1,t})_\lambda =
  (I_{c-1,t}(\Delta\H))_\lambda$$
  for $\lambda<t+s$ by Remark (\ref{r:Act}).
\end{proof}

We can now show that an Artinian reduction of $R/J_c(\H)$ has Hilbert
function $\H$.

\begin{theorem}\label{t:JHF}
  Let $\H$ be an SI-sequence, $c=\H(1)$, $t=\ell(\Delta\H),$ and
  $s=\ell(\H)-2t+1$. Then $\Delta^{2t+s}H(R/J_c(\H)) = \H$.
\end{theorem}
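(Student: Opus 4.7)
The cases $c=0,1$ are immediate from the definition, so I focus on $c\ge 2$. The plan is to compute the Hilbert series of the Artinian reduction of $R/J_c(\H)$ via the Mayer--Vietoris sequence attached to the linkage. Setting $R=R_{c,t,s}$, $I=I_{c-1,t}(\Delta\H)$, $G=G_{c-1,t,s}$ (both viewed in $R$), $L=G:I$, and $J=J_c(\H)=I+L$, the identity $I\cap L=G$ shown in the proof of Theorem \ref{t:JisGor} gives
$$0\to R/G\to R/I\oplus R/L\to R/J\to 0.$$
Theorems \ref{t:IisCM}, \ref{t:Ggor}, and \ref{t:PeskineSzpiro} imply that $R/I, R/L, R/G$ are Cohen--Macaulay of dimension $2t+s+1$, while $R/J$ is Gorenstein of dimension $2t+s$. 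Multiplying the corresponding Hilbert-series identity by $(1-z)^{2t+s+1}$ gives
$$(1-z)\hat\H(z) \;=\; \tilde P_I(z)+\tilde P_L(z)-\tilde P_G(z),$$
where $\hat\H(z):=\sum_\lambda (\Delta^{2t+s}H(R/J))(\lambda)\,z^\lambda$ and $\tilde P_I, \tilde P_L, \tilde P_G$ are the Hilbert series of the Artinian reductions of $R/I, R/L, R/G$.

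Next I identify the three polynomials. Corollary \ref{c:IctHHilbertfunction} (combined with the fact that $R/I$ differs from $R_{c-1,t,s}/I_{c-1,t}(\Delta\H)$ by just one extra polynomial variable) gives $\tilde P_I(z)=\sum_{i=0}^t\Delta\H(i)\,z^i$. Theorem \ref{t:Ggor} identifies $\overline{R/G}$ as Artinian Gorenstein of socle degree $\sigma:=2t+s$. The standard perfect pairing in this algebra identifies the image of $L=G:I$ with $0:_{\overline{R/G}}\bar I$, producing the linkage duality
$$\tilde P_L(z) \;=\; \tilde P_G(z)-z^\sigma \tilde P_I(z^{-1}).$$
Remarkably, $\tilde P_G$ cancels on substitution, leaving
$$(1-z)\hat\H(z) \;=\; \sum_{i=0}^t \Delta\H(i)\bigl(z^i-z^{2t+s-i}\bigr).$$
Since $2t+s-i>i$ for $i\le t$ (as $s\ge 1$), each $(z^i-z^{2t+s-i})/(1-z)$ equals $z^i+z^{i+1}+\cdots+z^{2t+s-1-i}$, yielding
$$\hat\H(\lambda) \;=\; \sum_{i=0}^{\min\{\lambda,\,2t+s-1-\lambda,\,t\}} \Delta\H(i).$$

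A brief case analysis then verifies $\hat\H=\H$. For $\lambda\le t$, the minimum is $\lambda$ and $\hat\H(\lambda)=\sum_{i=0}^\lambda\Delta\H(i)=\H(\lambda)$. For $t+s\le\lambda\le 2t+s-1$, the minimum is $2t+s-1-\lambda\le t-1$ and $\hat\H(\lambda)=\H(2t+s-1-\lambda)=\H(\lambda)$ by symmetry of $\H$ about $\ell(\H)=2t+s-1$. In the middle range $t<\lambda<t+s$ we obtain $\hat\H(\lambda)=\H(t)$, which equals $\H(\lambda)$ because the SI-sequence condition with $t=\ell(\Delta\H)$ forces $\H$ to be constant on $[t,\lfloor\ell(\H)/2\rfloor]$, and symmetry extends this constancy over all of $[t,t+s-1]$. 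The main technical subtlety is the linkage-duality formula: it presupposes that the common system of parameters used to Artinian-reduce all three of $R/I, R/L, R/G$ can be chosen regular on each, which is available because $k$ is infinite and the three rings are Cohen--Macaulay of the same dimension.
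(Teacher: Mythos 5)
Your proposal is correct, and while it starts from the same short exact sequence as the paper (coming from $I\cap(G:I)=G$, established in Theorem \ref{t:JisGor}), it finishes by a genuinely different mechanism. The paper deliberately never computes $H(R/(G:I))$: it extracts only (i) constancy of the $d$-th differences in degrees $\ge 2t+s$, (ii) the low-degree agreement $J_\lambda=I_\lambda$ for $\lambda<t+s$, obtained from Davis--Geramita (Theorem \ref{t:DavisGer}) via Corollary \ref{c:IandJ}, and (iii) the symmetry of the Hilbert function of the Artinian Gorenstein reduction of $R/J$, and these three facts pin down $\H$. You instead compute the $h$-vector of $R/(G:I)$ outright through the linkage duality $\tilde P_L(z)=\tilde P_G(z)-z^{\sigma}\tilde P_I(z^{-1})$ with $\sigma=2t+s$ (Theorem \ref{t:Ggor} and Corollary \ref{c:ellG}), after which $\tilde P_G$ cancels and the answer falls out of the elementary identity $(z^i-z^{2t+s-i})/(1-z)=z^i+\cdots+z^{2t+s-1-i}$; your case analysis is correct ($s\ge 1$, and constancy of $\H$ on $[t,t+s-1]$ follows exactly as you argue from $t=\ell(\Delta\H)$ and symmetry). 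What your route buys is a closed-form computation of $\Delta^{2t+s}H(R/J)$ that needs neither Corollary \ref{c:IandJ} nor the symmetry of $R/J$ as an input; what the paper's route buys is that it never has to know how the residual $G:I$ behaves under Artinian reduction.

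That last point is where your write-up slightly misplaces the technical burden. Choosing a system of parameters regular simultaneously on $R/I$, $R/G$, and $R/(G:I)$ is the easy half (prime avoidance, $k$ infinite). The substantive half of your duality formula is that the image of $G:I$ in the Artinian reduction $A$ of $R/G$ is exactly $0:_A\bar{I}$ --- equivalently, that forming the residual commutes with reduction modulo a regular sequence on $R/G$. This is true here because $R/I$ is Cohen--Macaulay (one can also see it by identifying $(G:I)/G$ with a twist of the canonical module of $R/I$), and it is standard in liaison theory --- indeed it is the result underlying the quoted Theorem \ref{t:DavisGer} --- but it is a genuine lemma, not a consequence of merely choosing the regular sequence well. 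With that lemma quoted (e.g.\ from \cite{DavisGeramita} or the liaison literature) or proved, your argument is complete.
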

\begin{proof}
  The result is obvious if $c=0$ or $1$, so suppose that $c\ge 2$.

  Let $d=2t+s=\dim R/J_{c}(\H)$ (Theorem (\ref{t:JisGor})). Then each
  of $R/I_{c-1,t}(\Delta\H)$, $R/G_{c-1,t,s}$, and
  $R/(G_{c-1,t,s}:I_{c-1,t}(\Delta\H))$ are Cohen-Macaulay of
  dimension $d+1$ (Theorems (\ref{t:IisCM}), (\ref{t:Ggor}), and
  (\ref{t:PeskineSzpiro}) and the comments before Lemma
  (\ref{l:multbyomega})). Since
  $\ell(\Delta^{d+1}H(R/I_{c-1,t}(\Delta\H)))=t$ (Corollary
  (\ref{c:IctHHilbertfunction})), and
  $$\ell(\Delta^{d+1}H(R/(G_{c-1,t,s}:I_{c-1,t}(\Delta\H))))\le
  \ell(\Delta^{d+1}H(R/G_{c-1,t,s}))=2t+s$$
  (Corollary (\ref{c:ellG})),
  $\Delta^{d}H(R/I_{c-1,t}(\Delta\H),\lambda))$,
  $\Delta^{d}H(R/(G_{c-1,t,s}:I_{c-1,t}(\Delta\H)),\lambda)$, and
  $\Delta^{d}H(R/G_{c-1,t,s},\lambda)$ are constant for
  $\lambda\ge 2t+s$.

  For the moment we write $I$, $G$, and $J$ for the sake of
  readability.  Due to the fact that $R/J$ is Gorenstein of dimension
  $d$, it is sufficient to show that
  $\ell(\Delta^dH(R/J))\le \ell(\H)$ and
  $\Delta^d H(R/J, \lambda)=\H(\lambda)$ for
  $\lambda\le \lfloor \frac{\ell(\H)}{2}\rfloor$ (symmetry takes care
  of the rest).

 The first fact follows from the usual short exact sequence
 $$0\to \frac{R}{G} \to
 \frac{R}{I}\oplus\frac{R}{G:I} \to \frac{R}{J}\to 0$$
 (recall that $I\cap (G:I)=G$, as we saw in Theorem (\ref{t:JisGor})).
 Obviously
 $$\Delta^d H(R/J,\lambda)= \Delta^d
 H(R/I,\lambda) +\Delta^d H(R/G:I,\lambda) - \Delta^d
 H(R/G,\lambda),$$
 and we've seen that each of $\Delta^d H(R/G,\lambda)$,
 $\Delta^d H(R/I,\lambda)$, and $\Delta^d H(R/G:I,\lambda)$ is
 constant for $\lambda \ge 2t+s$. We conclude that
 $\Delta^d H(R/J, \lambda)=0$ in those degrees and thus
 $\ell(\Delta^d H(R/J))\le 2t+s-1= \ell(\H)$ as required.

 We proved in Corollary (\ref{c:IandJ}) that $J_\lambda = I_\lambda$
 for $\lambda< t+s$ and
 $\lfloor\frac{\ell(\H)}{2}\rfloor = \lfloor\frac{2t+s-1}{2}\rfloor
 <t+s$,
 so it is enough to show that $\H(\lambda)= \Delta^d H(R/I,\lambda)$
 for $\lambda \le \lfloor\frac{\ell(\H)}{2}\rfloor$.  Because
 $\Delta^{d+1}H(R/I)=\Delta \H$ (Corollary
 (\ref{c:IctHHilbertfunction})), $\Delta^dH(R/I,\lambda)=\H(\lambda)$
 for $\lambda\le t$.  But $\Delta^d H(R/I,\lambda)= \Delta^dH(R/I,t)$
 for $\lambda \ge t$ (as already observed) and $\H(\lambda)=\H(t)$ for
 $t\le \lambda \le \lfloor\frac{\ell(\H)}{2}\rfloor$ since $\H$ is an
 SI-sequence, which completes the proof.
\end{proof}

Recall that our goal is to show that the graded Betti numbers of
$J_c(\H)$ (for $\ell(\H)>0$) match Migliore and Nagel's upper bound
described in Section (\ref{s:introduction}). In our notation, we wish
to show that for $\H$ an SI-sequence with $\H(1)\ge 1$, $c=\H(1)$,
$t=\ell(\Delta\H)$, $s=\ell(\H)-2t+1$, and $L$ the lex ideal in $c-1$
variables with Hilbert function $\Delta \H$, then
 $$\beta^{J_c(\H)}_{i,i+j}=
 \begin{cases} \beta^L_{i,i+j} & \mbox{if }j\le t+s-2\\
   \beta^L_{i,i+j} +\beta^L_{c-i,c-i+2t+s-1-j} & \mbox{if } t+s-1\le j
   \le t\\ \beta^L_{c-i,c-i+2t+s-1-j} & \mbox{if } j \ge
   t+1.\\ \end{cases}$$
 We note immediately that if $s=1$ then we require
 $$\beta^{J_c(\H)}_{i,i+j}=
 \begin{cases} \beta^L_{i,i+j} & \mbox{if }j\le t-1\\
   \beta^L_{i,i+t} +\beta^L_{c-i,c-i+t} & \mbox{if } j=t \\
   \beta^L_{c-i,c-i+2t-j} & \mbox{if } j \ge t+1. \\ \end{cases}$$

 If $s\ge 2$ then $t+s-1\le j \le t$ is an empty condition.  Moreover,
 in the case $s=2$ we have $\lfloor\frac{2t+s-1}{2}\rfloor = t+s-2$
 while $s>2$ implies that $\beta^L_{i,i+j} = 0$ for
 $t+1\le \lfloor\frac{2t+s-1}{2}\rfloor\le j\le t+s-2$ and
 $\beta^L_{c-i,c-i+2t+s-1-j} = 0$ for
 $t+1\le j \le \lfloor\frac{2t+s-1}{2}\rfloor $ (since the regularity
 of $k[\mu_1, \dots , \mu_{c-1}]/L$ is $t$ because the Hilbert function of
 that quotient is $\Delta\H$).  Thus for $s\ge 2$ we require
 $$\beta^{J_c(\H)}_{i,i+j}=
 \begin{cases} \beta^L_{i,i+j} & \mbox{if }j\le \lfloor\frac{2t+s-1}{2}\rfloor\\
   \beta^L_{c-i,c-i+2t+s-1-j} & \mbox{if } j
   \ge\lfloor\frac{2t+s-1}{2}\rfloor+1.\\ \end{cases}$$

 This leads to a nice interpretation. The Betti diagram we are aiming
 for consists of two copies of a lex Betti diagram, the second rotated
 and shifted, then added to the first. If $s=1$, the shift is such
 that the last row of the first diagram and the first row of the
 shifted diagram coincide (and are added, so the middle row of the
 resulting diagram is the sum of the the last nonzero row of the
 original Betti diagram with its mirror image). If $s\ge 2$ then the
 first nonzero row of the shifted diagram sits $s-1$ rows below the
 last nonzero row of the original (so for $s\ge 3$ there are $s-2$
 rows of zeros between them).

\begin{example}\label{e:JBN}
  Consider for example the SI-sequences
\begin{eqnarray*}
  \H&=&\{1,4,6,4,1\}\ (s=1),\\
  {\mathcal G}&=&\{1,4,6,6,4,1\}\  (s=2), \mbox{ and}\\
  {\mathcal K}&=&\{1,4,6,6,6,4,1\}\  (s=3).\\
\end{eqnarray*}
Since the first difference of each of $\H$, ${\mathcal G}$, and
${\mathcal K}$ is $\{1,3,2\}$, by Theorem (\ref{t:IBN})
$I_{3,2}(\Delta\H)$, $I_{3,2}(\Delta{\mathcal G})$, and
$I_{3,2}({\Delta\mathcal K})$, have the same Betti diagram as the lex
ideal
$L=(\mu_1^2,\mu_1\mu_2,\mu_1\mu_3, \mu_2^2, \mu_2\mu_3^2,\mu_3^3)$,
namely \newline

\begin{tabular}{r|rrrr}
$\beta^L$  &    $1$& $6$& $6$& $3$\\
  \hline
  $0$& $1$& .& .& .\\
  $1$& .& $4$& $4$& $1$\\
  $2$& .& $2$& $4$& $2$\\
\end{tabular}
\newline

The Betti diagrams for $J_4(\H)$, $J_4({\mathcal G})$, and
$J_4({\mathcal K})$ are \newline

\begin{tabular}{r|rrrrr}
  $\beta^{J_4(\H)}$&1&9&16&9&1\\
  \hline
  $0$& $1$& .&  .& .& .\\
  $1$& .& $4$&  $4$& $1$& .\\
  $2$& .& $4$&  $8$& $4$& .\\
  $3$& .& $1$&  $4$& $4$& .\\
  $4$& .& .&  .& .& $1$\\
\end{tabular}
\newline\newline

\begin{tabular}{r|rrrrr}
  $\beta^{J_4({\mathcal G})}$&1&9&16&9&1\\
  \hline
  $0$& $1$& .&  .& .& .\\
  $1$& .& $4$&  $4$& $1$& .\\
  $2$& .& $2$&  $4$& $2$& .\\
  $3$& .& $2$&  $4$& $2$& .\\
  $4$& .& $1$&  $4$& $4$& .\\
  $5$& .& .&  .& .& $1$\\
\end{tabular}
\newline

and
\newline

\begin{tabular}{r|rrrrr}
  $\beta^{J_4({\mathcal K})}$&1&9&16&9&1\\
  \hline
  $0$& $1$& .&  .& .& .\\
  $1$& .& $4$&  $4$& $1$& .\\
  $2$& .& $2$&  $4$& $2$& .\\
  $3$& .& .&  .& .& .\\
  $4$& .& $2$&  $4$& $2$& .\\
  $5$& .& $1$&  $4$& $4$& .\\
  $6$& .& .&  .& .& $1$\\
\end{tabular}

\end{example}

 To prove that the graded Betti numbers of $J_c(\H)$ behave as
 desired, we could simply apply Migliore and Nagel's Corollary 8.2
 \cite{MiglioreNagel:Gorensteins}. Instead we give a different proof.

\begin{theorem}\label{t:JcHBN}
  Let $\H$ be an SI-sequence with $\H(1)\ge 1$, $c= \H(1)$,
  $t=\ell(\Delta\H),$  $s=\ell(\H)-2t+1$, and let $L$ be the lex
  ideal in $k[\mu_1, \dots, \mu_{c-1}]$ with Hilbert function
  $\Delta \H$. Then
 $$\beta_{i,i+j}^{J_{c}(\H)} = \begin{cases}
   \beta^L_{i,i+j} &\mbox{if }j\le t+s-2\\
   \beta^L_{i,i+j}+\beta^L_{c-i,c-i+2t+s-1-j} &\mbox{if }t+s-1\le j\le
   t\\
   \beta^L_{c-i,c-i+2t+s-1-j} &\mbox{if }j \ge t+1.\end{cases}$$
\end{theorem}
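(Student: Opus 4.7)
My plan is to apply a mapping cone construction to the short exact sequence
$$0 \to K/G \to R/I \to R/J \to 0,$$
where $G = G_{c-1,t,s}$, $I = I_{c-1,t}(\Delta\H)$, $K = G:I$, and $J = J_c(\H) = I + K$. This sequence arises from the second isomorphism theorem applied to the surjection $R/I \twoheadrightarrow R/J$: its kernel is $J/I = (I+K)/I \cong K/(I \cap K) = K/G$, using the identity $I \cap K = G$ established inside the proof of Theorem \ref{t:JisGor}.

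The Betti numbers of the two constituents are pinned down as follows. For $R/I$, Theorem \ref{t:IBN} gives $\beta^{R/I}_{i,j} = \beta^L_{i,j}$. For $K/G$ I would invoke Gorenstein linkage duality: since Theorem \ref{t:Ggor} says that $R/G$ is Gorenstein of projective dimension $c-1$ with top free module $R(-(2t+s+c-1))$, and Theorem \ref{t:IisCM} says that $R/I$ is Cohen-Macaulay of the same projective dimension, the canonical identifications $K/G = \mathrm{Hom}_{R/G}(R/I, R/G)$ and $\mathrm{Hom}_{R/G}(R/I, \omega_{R/G}) \cong \omega_{R/I}$ combine with the standard formula $\beta^{\omega_{R/I}}_{i,j} = \beta^{R/I}_{c-1-i,\,N-j}$ (for $N$ the number of variables of $R$) to yield, after the $N$-dependent shift cancels against that of $\omega_{R/G}$, the identity
$$\beta^{K/G}_{i,j} = \beta^L_{c-1-i,\,2t+s+c-1-j}.$$

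The key observation driving minimality of the mapping cone is a degree-separation phenomenon. The Betti numbers $\beta^{R/I}_{i,j} = \beta^L_{i,j}$ are nonzero only when $j-i \in [0,\,t]$, since the regularity of the lex quotient $k[\mu_1,\ldots,\mu_{c-1}]/L$ is $t$. By the duality formula above, the Betti numbers $\beta^{K/G}_{i,j}$ are nonzero only when $j-i \in [t+s,\,2t+s]$. Since $s \ge 1$ holds for every SI-sequence (one checks directly that $t = \ell(\Delta\H) \le \lfloor \ell(\H)/2 \rfloor$ forces $s = \ell(\H) - 2t + 1 \ge 1$), these two ranges are disjoint. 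Consequently, for any graded lift $\phi: P_\bullet \to H_\bullet$ of the inclusion $K/G \hookrightarrow R/I$ by minimal free resolutions, each summand $R(-d)$ of $P_i$ maps into the degree-$d$ part of $H_i$, all of which lies in $\mathfrak{m}H_i$ since $H_i$ has no generator in degree $d$. Hence $\phi$ is minimal, and the mapping cone $\mathrm{Cone}(\phi)$, with $i$-th term $P_{i-1} \oplus H_i$, is a minimal free resolution of $R/J$.

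Reading off the Betti numbers in Betti diagram coordinates yields
$$\beta^{R/J}_{i,i+j} = \beta^L_{i,i+j} + \beta^L_{c-i,\,(c-i)+(2t+s-1-j)}$$
for $(i,j) \ne (0,0)$, together with $\beta^{R/J}_{0,0} = 1$. Matching this against the three cases in the theorem statement reduces to two observations: the term $\beta^L_{c-i,\,(c-i)+(2t+s-1-j)}$ vanishes whenever $2t+s-1-j > t$, i.e., $j < t+s-1$; and $\beta^L_{i,i+j}$ vanishes whenever $j > t$. The main obstacle of this plan is the linkage duality computation of $\beta^{K/G}$, which requires careful tracking of grading shifts in the canonical module; but this is a standard consequence of the Gorenstein structure on $R/G$ specified by Theorem \ref{t:Ggor}.
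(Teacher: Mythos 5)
Your proposal is correct, but it takes a genuinely different route from the paper. You resolve $R/J$ in one stroke via the mapping cone over the linkage sequence $0\to (G:I)/G\to R/I\to R/J\to 0$, computing the Betti numbers of the link module $(G:I)/G$ by canonical-module duality over the Gorenstein ring $R/G$ (Theorem \ref{t:Ggor} supplies the shift $2t+s+c-1$, and Theorem \ref{t:IisCM} the Cohen--Macaulayness needed for $\mathrm{Hom}_{R/G}(R/I,\omega_{R/G})\cong\omega_{R/I}$ and for the dual complex to be a minimal resolution), and then using the degree separation $j-i\in[0,t]$ versus $j-i\in[t+s,2t+s]$, valid since $s\ge 1$, to see that the comparison map between the two minimal resolutions lands in $\mathfrak{m}$ times the target, so the cone is minimal. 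I checked the shift bookkeeping and the minimality argument (the relevant comparison is between $P_{n-1}$ and $H_{n-1}$ in the same homological index, so even the overlapping row $j=t$ when $s=1$ causes no cancellation), and the resulting formula $\beta^{J}_{i,i+j}=\beta^L_{i,i+j}+\beta^L_{c-i,(c-i)+(2t+s-1-j)}$ reduces to the three stated cases by the regularity bound $\mathrm{reg}(k[\mu_1,\dots,\mu_{c-1}]/L)=t$. This is essentially the linkage argument of Migliore and Nagel (their Corollary 8.2), which the paper explicitly mentions and deliberately sets aside: the paper instead gets the rows $j\le t+s-2$ from the degree coincidence $(I)_\lambda=(J)_\lambda$ of Corollary \ref{c:IandJ} together with Theorem \ref{t:IBN}, the rows $j\ge t+1$ from the self-duality of the Gorenstein resolution of $R/J$, and, in the only delicate case $s=1$, pins down the middle row $j=t$ by an explicit Hilbert-series (alternating sum along diagonals) computation. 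Your route is shorter and uniform in $s$ (the $s=1$ row comes for free from minimality of the cone), at the price of importing the standard but unproved-here linkage/duality facts with their grading shifts, whereas the paper's argument is more elementary and self-contained but needs the case split and the numerical computation. Two small points you should still address: the case $c=1$ is not covered by your argument, since $J_1(\H)$ is defined directly as a principal ideal rather than as $I+(G:I)$ (it is a one-line Koszul check); and you should say a word justifying $K/G\cong\mathrm{Hom}_{R/G}(R/I,R/G)$ and the identification of its minimal free resolution with the shifted dual of that of $R/I$, citing Peskine--Szpiro or standard linkage references, since the paper itself only establishes the Gorenstein property of $R/G$ and $R/J$.
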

\begin{proof}
  Note that by symmetry, it is enough to show the equality for
  $j\le \lfloor\frac{2t+s-1}{2}\rfloor$. To reduce clutter we will
  write $J$ for $J_c(\H)$ and $I$ for $I_{c-1,t}(\Delta\H)$.

  Now, as per our discussion before example (\ref{e:JBN}), if $s\ge 2$
  we must show that $\beta^{J}_{i,i+j}= \beta^{L}_{i,i+j}$ for
  $j\le \lfloor\frac{2t+s-1}{2}\rfloor$, that is, that the Betti
  diagrams of $J$ and $L$ coincide in rows $0$ through
  $\lfloor\frac{2t+s-1}{2}\rfloor$. But this is immediate because
  $\lfloor\frac{2t+s-1}{2}\rfloor\le t+s-1$ and
  $(I)_\lambda= (J)_\lambda$ for $\lambda\le t+s-1$ (by Corollary
  (\ref{c:IandJ})) while the graded Betti numbers of $I$ and $L$
  coincide (by Theorem (\ref{t:IBN})).

  If $s=1$, then Corollaries (\ref{c:IctHHilbertfunction}) and
  (\ref{c:IandJ}) are only sufficient to show that
  $\beta^{J}_{i,i+j}= \beta^{L}_{i,i+j}$ for $j\le t-1$ and
  $\beta^{J}_{i,i+j}= \beta^{L}_{c-i,c-i+2t-j}$ for $j\ge t+1$ (the
  latter by symmetry). Thus it remains to show that
  $\beta^{J}_{i,i+t}=\beta^L_{i,i+t} +\beta^{L}_{c-i,c-i+t}$ for
  $i=0, \dots , c$.

  To accomplish this recall (Theorem 11.3 in \cite{Stanley}) that for
  any homogeneous $M\subseteq S=k[\mu_1, \dots, \mu_c]$,
  $$(1-T)^c\sum_{d\in \N}H(S/M,d)T^d =\sum_{d=0}^c\sum_{j\in
    \N}(-1)^d\beta^{M}_{d,j}T^j .$$
  It follows that fixing a Hilbert function determines the alternating
  sum of the graded Betti numbers along the northeastern heading
  diagonals in the Betti diagram of $S/M$. Moreover, and most relevant
  for us, if the Hilbert function and all but one row of a Betti
  diagram are known, then the unknown entries are computable.
 
  Since $R/J$ is Cohen Macaulay, an Artinian reduction of $R/J$ has
  Hilbert function $\H$ (Theorem (\ref{t:JHF})) with the same graded
  Betti numbers. So
  $$(1-T)^c\sum_{d=0}^{2t}\H(d)T^d=\sum_{d=0}^c\sum_{j\in
    \N}(-1)^d\beta^{J}_{d,j}T^{j},$$
  and hence for fixed $i\in \{0, \dots, c\}$ if $a$ is the coefficient
  of $T^{i+t}$ in $$(1-T)^c\sum_{d=0}^{2t}\H(d)T^d,$$
  then $$a=\sum_{d=0}^c(-1)^d\beta^{J}_{d,i+t}$$ whence
  $$\beta^{J}_{i,i+t} =
  (-1)^ia-\sum_{d=0}^{i-1}(-1)^{d+i}\beta^{J}_{d,i+t} -
  \sum_{d=i+1}^{c}(-1)^{d+i}\beta^{J}_{d,i+t}.$$

  Of course $\beta^J_{d,i+t}=\beta^J_{d,d+(i-d+t)}$. If
  $0\le d\le i-1$, then $i-d+t\ge t+1$, so
  $\beta^J_{d,i+t}=\beta^L_{c-d,c-d+2t-(i-d+t)}=\beta^L_{c-d,c-i+t}$. Similarly,
  if $i+1\le d\le c$, then $i-d+t\le t-1$ so
  $\beta^J_{d,i+t}=\beta^L_{d,i+t}$.

  Thus
  $$\beta^{J}_{i,i+t} =
  (-1)^ia-\sum_{d=0}^{i-1}(-1)^{d+i}\beta^{L}_{c-d,c-i+t} -
  \sum_{d=i+1}^{c}(-1)^{d+i}\beta^{L}_{d,i+t}.$$

  Similarly, we have
  $$(1-T)^{c-1}\sum_{d=0}^{t}\Delta\H(d)T^d= \sum_{d=0}^{c-1}\sum_{j\in
    \N}(-1)^d\beta^L_{d,j}T^j,$$
  and hence, writing $a'$ and $a''$ to be the coefficients of
  $T^{i+t}$ and $T^{c-i+t}$ in
  $(1-T)^{c-1}\sum_{d=0}^{t}\Delta\H(d)T^d$, we have
  $$a'= \sum_{d=0}^{c-1}(-1)^d\beta^L_{d,i+t}=\sum_{d=i}^c(-1)^d\beta^L_{d,i+t}$$ and
  $$a''=\sum_{d=0}^{c-1}(-1)^d\beta^L_{d,c-i+t}=\sum_{d=c-i}^{c}(-1)^d\beta^L_{d,c-i+t}=
  \sum_{d=0}^{i}(-1)^{c-d}\beta^L_{c-d,c-i+t}.$$
  We used that $\beta^L_{d,i+t}=0$ for $d<i$ and $\beta^L_{d,c-i+t}=0$
  for $d<c-i$ because the regularity of $k[\mu_1, \dots, \mu_{c-1}]/L$
  is $t$ (we can read this from the Hilbert function since $L$ is lex)
  and that $\beta^L_{c,j}=0$ since the projective dimension of
  $k[\mu_1, \dots, \mu_{c-1}]/L$ is $c-1$ (note   $k[\mu_1, \dots,
  \mu_{c-1}]/L$ is dimension $0$). Thus
  $$(-1)^{i+1}a'+\beta^L_{i,i+t}
  =-\sum_{d=i+1}^{c}(-1)^{d+i}\beta^L_{d,i+t}$$ and
  $$(-1)^{c-i+1}a''+\beta^L_{c-i,c-i+t}
  =-\sum_{d=0}^{i-1}(-1)^{2c-i-d}\beta^L_{c-d,c-i+t}
  $$ $$=-\sum_{d=0}^{i-1}(-1)^{d+i}\beta^L_{c-d,c-i+t}.$$

  We conclude that
  $$\beta^{J}_{i,i+t} =(-1)^ia+
  (-1)^{i+1}a'+\beta^L_{i,i+t} +(-1)^{c-i+1}a''+\beta^L_{c-i,c-i+t},$$
  so it is enough to show that $a=a'+(-1)^{c}a''$.

  Now recalling that $\Delta\H(d)$ is negative for $d>t$,
  $$(1-T)^c\sum_{d=0}^{2t}\H(d)T^d=
  (1-T)^{c-1}\sum_{d=0}^{2t+1}\Delta\H(d)T^d $$
  $$= (1-T)^{c-1}\sum_{d=0}^t \Delta\H(d)T^d +
  (1-T)^{c-1}\sum_{d=t+1}^{2t+1} \Delta\H(d)T^d,$$
  so $a=a'+b$ where $b$ is the coefficient of $T^{i+t}$ in
  $(1-T)^{c-1}\sum_{d=t+1}^{2t+1} \Delta\H(d)T^d.$ Thus we are
  finished if $b=(-1)^{c}a''$.

  Of course, $\H$ symmetric and $\ell(\H)=2t$, so for
  $t+1\le d\le 2t$, $\H(d)=\H(2t-d)$, and thus
  $\Delta\H(d)=\H(d)-\H(d-1)=\H(2t-d)-\H(2t-d+1)=-\Delta\H(2t-d+1)$. We
  conclude that
  $$(1-T)^{c-1}\sum_{d=t+1}^{2t+1} \Delta\H(d)T^d= -(1-T)^{c-1}\sum_{d=t+1}^{2t+1}
  \Delta\H(2t-d+1)T^d$$
  $$= -(1-T)^{c-1}\sum_{d=0}^{t} \Delta\H(d)T^{2t-d+1}.$$

  So $b$ is the coefficient of $T^{i+t}$ in
  $-(1-T)^{c-1}\sum_{d=0}^{t} \Delta\H(d)T^{2t-d+1}$. Inverting $T$,
  we see that $b$ is the coefficient of $T^{-i-t}$ in
  $$-(1-1/T)^{c-1}\sum_{d=0}^{t} \Delta\H(d)T^{d-2t-1},$$ whence $b$ is
  the coefficient of $T^{c-i+t}$ in
  $$-T^{2t+c}(1-1/T)^{c-1}\sum_{d=0}^{t}
  \Delta\H(d)T^{d-2t-1}$$
  $$=-(T-1)^{c-1}T^{2t+1}\sum_{d=0}^{t}
  \Delta\H(d)T^{d-2t-1}=(-1)^c(1-T)^{c-1}\sum_{d=0}^{t}
  \Delta\H(d)T^{d}$$ and thus $b=(-1)^{c}a''$ as required.
\end{proof}

Our final task is to reduce $R/J$ to a dimension zero Gorenstein ring
with the right Hilbert function and the weak Lefschetz property. To do
so requires an easy observation.

\begin{lemma}\label{l:killz0}
  Let $\H$ be an SI-sequence with $\ell(\H)>0$, $c=\H(1)$,
  $t=\ell(\Delta\H),$ and $s=\ell(\H)-2t+1$. Then
  $J_c(\H)+(z_0)=I_{c-1,t}(\Delta \H)+(z_0)$.
 \end{lemma}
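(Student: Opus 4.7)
The plan is to reduce to showing the containment $(G_{c-1,t,s}:I_{c-1,t}(\Delta\H))\subseteq I_{c-1,t}(\Delta\H)+(z_0)$, since the reverse inclusion $I_{c-1,t}(\Delta\H)+(z_0)\subseteq J_c(\H)+(z_0)$ is immediate from the very definition of $J_c(\H)$. The cases $c=0,1$ are handled directly from the definitions: $I_{0,t}(\Delta\H)=0$ and $J_1(\H)=(z_0\cdots z_{s-1})\subseteq (z_0)$, so both sides equal $(z_0)$.

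For $c\ge 2$, the key structural observation is that $\overline{z}=z_0\cdots z_{s-1}$ is divisible by $z_0$, so the second summand $\omega_{c-1,t}\overline{z}A'_{c-1,t-1}$ of $G_{c-1,t,s}$ lies in $(z_0)$. Therefore modulo $(z_0)$ the ideal $G_{c-1,t,s}$ collapses to $A_{c-2,t}R/(z_0)$. So if $f\in G_{c-1,t,s}:I_{c-1,t}(\Delta\H)$, then reducing mod $z_0$ gives $\overline f\cdot I_{c-1,t}(\Delta\H)\subseteq A_{c-2,t}$ in $R/(z_0)$.

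To finish, I would invoke Remark (\ref{r:Act}) applied to $\Delta\H$ (which shows $A_{c-1,t}\subseteq I_{c-1,t}(\Delta\H)$) to upgrade this to $\overline f\cdot A_{c-1,t}\subseteq A_{c-2,t}$ modulo $z_0$, and then apply Proposition (\ref{p:colonA}) which says $A_{c-2,t}:A_{c-1,t}=A_{c-2,t}$. The colon identity passes to the polynomial extension $R/(z_0)$ without issue, since both ideals are monomial and involve none of the extra variables. This yields $\overline f\in A_{c-2,t}R/(z_0)$, and lifting back gives $f\in A_{c-2,t}+(z_0)\subseteq I_{c-1,t}(\Delta\H)+(z_0)$, as desired.

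The only mildly delicate step is justifying that Proposition (\ref{p:colonA}) continues to hold after the polynomial ring extension; this is routine but should be stated explicitly. Otherwise the proof is a short chase using the definitions, Remark (\ref{r:Act}), and the single colon identity from Proposition (\ref{p:colonA}).
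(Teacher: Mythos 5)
Your proof is correct and is essentially the paper's own argument: both hinge on the facts that the $\overline{z}$-part of $G_{c-1,t,s}$ lies in $(z_0)$, that $A_{c-1,t}\subseteq I_{c-1,t}(\Delta\H)$ (Remark (\ref{r:Act})), and on the colon identity $A_{c-2,t}:A_{c-1,t}=A_{c-2,t}$ of Proposition (\ref{p:colonA}); the paper simply runs the same reduction elementwise in $R$ (splitting according to whether $\lambda m$ lands in $\omega_{c-1,t}\overline{z}A'_{c-1,t-1}$ or in $A_{c-2,t}$) rather than passing to $R/(z_0)$. The one point to patch is that the decomposition $G_{c-1,t,s}=A_{c-2,t}+\omega_{c-1,t}\overline{z}A'_{c-1,t-1}$ you invoke exists only for $c\ge 3$, so the case $c=2$ needs its own (trivial) line---e.g.\ note $G_{1,t,s}\subseteq(z_0)$ directly---just as the paper disposes of $c\le 2$ by inspection.
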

\begin{proof}
  If $c=1$ or $2$, the result is obvious (again, see example
  (\ref{e:IctH})) so we suppose $c>2$.  It is enough to show that
  $G_{c-1,t,s}:I_{c-1,t}(\Delta \H)\subseteq I_{c-1,t}(\Delta
  \H)+(z_0)$.
  Suppose that $\lambda\in G_{c-1,t,s}:I_{c-1,t}(\Delta\H)$. If there
  is a minimal generator $m\in I_{c-1,t}(\Delta \H)$ such that
  $\lambda m\in\omega_{c-1,t}\overline{z}A'_{c-1,t-1}$, then since
  $z_0$ is a non-zero-divisor on $I_{c-1,t}(\Delta \H)$ it follows
  that $z_0$ divides $\lambda$ and we are finished. Thus we may assume
  that
  $\lambda A_{c-1,t}\subseteq \lambda I_{c-1,t}(\Delta \H)\subseteq
  A_{c-2,t}$,
  whence by Proposition (\ref{p:colonA})
  $\lambda\in A_{c-2,t}\subseteq A_{c-1,t}\subseteq I_{c-1,t}(\Delta
  \H)$ as required.
\end{proof}

\begin{corollary}
  Let $\H$ be an SI-sequence with $\H(1)\ge 1$, $c=\H(1)$,
  $t=\ell(\Delta\H),$ and $s=\ell(\H)-2t+1$. Then $R/(J_c(\H)+(z_0))$
  is Cohen-Macaulay of dimension $2t+s$.
\end{corollary}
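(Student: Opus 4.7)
The plan is to combine Lemma \ref{l:killz0} with Theorem \ref{t:IisCM}. By Lemma \ref{l:killz0}, the ideal $J_c(\H) + (z_0)$ equals $I_{c-1,t}(\Delta\H) + (z_0)$, so it suffices to verify that $R_{c,t,s}/(I_{c-1,t}(\Delta\H) + (z_0))$ is Cohen-Macaulay of dimension $2t+s$.

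Since $\H$ is an SI-sequence with $\H(1) = c \ge 1$, the first difference $\Delta\H$ is an $\mathcal{O}$-sequence with $(\Delta\H)(1) \le c-1$ and $\ell(\Delta\H) = t$. Hence Theorem \ref{t:IisCM} applies with parameters $(c-1, t)$ to give that $R_{c-1,t,0}/I_{c-1,t}(\Delta\H)$ is Cohen-Macaulay of dimension $2t$. The ambient ring $R_{c,t,s}$ is obtained from $R_{c-1,t,0}$ by adjoining $s+1$ new polynomial variables (namely $z_0, \dots, z_{s-1}$ together with one additional $x$ or $y$ depending on the parity of $c$), so the standard observations recorded just before Lemma \ref{l:multbyomega} show that $R_{c,t,s}/I_{c-1,t}(\Delta\H)R_{c,t,s}$ remains Cohen-Macaulay, now of dimension $2t + s + 1$.

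Finally, every minimal generator of $I_{c-1,t}(\Delta\H)$ is a squarefree monomial in the $x$'s and $y$'s alone (by Proposition \ref{p:sqfree} together with the inclusion $I_{c-1,t}(\Delta\H) \subseteq R_{c-1,t,0}$), so $z_0$ is a non-zero-divisor on $R_{c,t,s}/I_{c-1,t}(\Delta\H)$. Killing a homogeneous non-zero-divisor on a Cohen-Macaulay graded quotient preserves Cohen-Macaulayness and drops the dimension by exactly one, yielding the desired dimension $2t+s$. There is no real obstacle here; once Lemma \ref{l:killz0} is granted, the argument is essentially dimension-theoretic bookkeeping.
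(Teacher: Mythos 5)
Your proposal is correct and follows the paper's own route exactly: reduce via Lemma (\ref{l:killz0}) to the ideal $I_{c-1,t}(\Delta\H)+(z_0)$, invoke Theorem (\ref{t:IisCM}) for Cohen-Macaulayness, and use the polynomial-extension remarks preceding Lemma (\ref{l:multbyomega}) together with the fact that $z_0$ avoids the generators to do the dimension bookkeeping. The paper's proof is just a one-line citation of these same ingredients, so your write-up is simply a fuller version of the same argument.
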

\begin{proof}
  The result follows from Lemma (\ref{l:killz0}), Theorem
  (\ref{t:IisCM}), and the remarks before Lemma (\ref{l:multbyomega}).
\end{proof}

We can now prove the main result of the paper. 

\begin{theorem}\label{t:Artreduction}
  Let $\H$ be an SI-sequence. Then there is an Artinian Gorenstein
  $k$-algebra with the weak Lefschetz property and Hilbert function
  $\H$ that has unique maximal graded Betti numbers among all Artinian
  Gorenstein $k$-algebra with the weak Lefschetz property and Hilbert
  function $\H$.
\end{theorem}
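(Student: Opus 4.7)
The plan is to establish existence and uniqueness together: by Theorem \ref{t:MiglioreNagel} the Migliore-Nagel formula bounds the graded Betti numbers of any Artinian Gorenstein $k$-algebra with the weak Lefschetz property and Hilbert function $\H$ from above, so it suffices to exhibit a single such algebra attaining that bound. The trivial cases $c:=\H(1)\in\{0,1\}$ are handled directly, so assume $c\ge 2$ and set $t=\ell(\Delta\H)$, $s=\ell(\H)-2t+1$.

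Let $B=R_{c,t,s}/J_c(\H)$. By Theorems \ref{t:JisGor}, \ref{t:JHF}, and \ref{t:JcHBN}, $B$ is Gorenstein of dimension $2t+s$ with Hilbert function (after Artinian reduction) equal to $\H$ and graded Betti numbers agreeing with the Migliore-Nagel bound. Choose a generic system of parameters $\ell_1,\ldots,\ell_{2t+s}$ of linear forms on $B$ and set $A=B/(\ell_1,\ldots,\ell_{2t+s})$. This $A$ is Artinian Gorenstein with Hilbert function $\H$ and the same graded Betti numbers as $B$, so it only remains to produce a Lefschetz element.

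My claim is that the image of $z_0$ in $A$ is a Lefschetz element. Lemma \ref{l:killz0} identifies $B/(z_0)$ with $R_{c,t,s}/(I_{c-1,t}(\Delta\H)+(z_0))$; since $z_0$ does not appear in any generator of $I_{c-1,t}(\Delta\H)$, it is a non-zero-divisor on $R_{c,t,s}/I_{c-1,t}(\Delta\H)$, which is Cohen-Macaulay of dimension $2t+s+1$ by Theorem \ref{t:IisCM}. Hence $B/(z_0)$ is Cohen-Macaulay of dimension $2t+s$ with Artinian reduction of Hilbert function $\Delta\H$ (vanishing past degree $t$), by Corollary \ref{c:IctHHilbertfunction}. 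Choosing $\ell_1,\ldots,\ell_{2t+s}$ to form a regular sequence on both $B$ and $B/(z_0)$ simultaneously — a nonempty Zariski-open condition since $k$ is infinite and both are Cohen-Macaulay of dimension $2t+s$ — yields $H(A/z_0A,d)=\Delta\H(d)$ for $d\le t$ and $H(A/z_0A,d)=0$ for $d>t$. Since $\H$ is an SI-sequence, $\Delta\H(d)\ge 0$ for $d\le\lfloor\ell(\H)/2\rfloor$ and $\Delta\H(d)\le 0$ thereafter, so $H(A/z_0A,d)$ coincides with $\max\{0,\H(d)-\H(d-1)\}$ in every degree. This is precisely the Hilbert function of the cokernel of a multiplication map having maximal rank — injective on the first half of $A$ and surjective on the second — confirming WLP.

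The main subtle point is that $z_0$ is actually a zero-divisor on $B$ itself (consistent with $\dim B=\dim B/(z_0)$), so one cannot include $z_0$ in the regular sequence defining $A$. The trick is that $z_0$ nonetheless becomes a Lefschetz element on the Artinian quotient, provided the $\ell_i$ are coordinated to respect the Cohen-Macaulay structure of both $B$ and $B/(z_0)$.
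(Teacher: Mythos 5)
Your proposal is correct and follows essentially the same route as the paper: form $J_c(\H)$, cut by linear forms chosen regular simultaneously on $R/J_c(\H)$ and $R/(J_c(\H)+(z_0))$, use Lemma \ref{l:killz0} together with the regularity of $z_0$ on $R/I_{c-1,t}(\Delta\H)$ to compute $H(A/z_0A)=\Delta\H$ and deduce the weak Lefschetz property, then conclude maximality from Theorem \ref{t:JcHBN} and Theorem \ref{t:MiglioreNagel}. Your observation that $z_0$ is a zero-divisor on $R/J_c(\H)$, so the linear forms must be coordinated rather than including $z_0$ in the regular sequence, is exactly the point the paper's proof handles the same way.
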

\begin{proof} If $\H=1$, then this is obvious, so suppose $\ell(\H)>0$
  and let $c=\H(1)$, $t=\ell(\Delta\H),$ and $s=\ell(\H)-2t+1$. Since
  Since $R/J_c(\H)$ and $R/(J_c(\H)+z_0)$ are Cohen Macaulay of
  dimension $2t+s$ and $k$ is infinite, we can find a length $2t+s$
  sequence of linear forms $\ell_1, \dots, \ell_{2t+s}$ which is
  regular on both $R/J_c(\H)$ and $R/(J_c(\H)+z_0)$. Let $\mathbb L$
  be the ideal generated by the $\ell_i$, $S=R_{c,t,s}/\mathbb L$, and
  $\overline{J_c(\H)}$ be the image of $J_c(\H)$ in $S$. Then
  $S/\overline{ J_c(\H)}$ is an Artinian reduction of $R/J_{c}(\H)$
  and hence Gorenstein (Theorem (\ref{t:JisGor})) with Hilbert
  function $\H$ (Theorem (\ref{t:JHF})). To show that
  $S/\overline{ J_c(\H)}$ has the weak Lefschetz property, it is
  enough to show that
  $H(S/(\overline{ J_c(\H)}+\overline{z_0}))=\Delta\H$. But $z_0$ is
  regular on $R/I_{c-1,t}(\Delta \H)$ and
  $J_c(\H)+(z_0)=I_{c-1,t}(\Delta\H)+(z_0)$ (Lemma (\ref{l:killz0})),
  so $\{z_0, \ell_1, \dots, \ell_{2t+s}\}$ is regular on
  $R/I_{c-1,t}(\Delta \H)$ and thus
  $H(S/\overline{J_c(\H)}+\overline{z_0})= H(R/J_c(\H)+{\mathbb
    L}+z_0)= H(R/(I_{c-1,t}(\Delta \H)+z_0+{\mathbb L}))=
  \Delta^{2t+s+1}H(R/I_{c-1,t}(\Delta\H))=\Delta \H$
  (applying Corollary (\ref{c:IctHHilbertfunction})).

  Finally, since the graded Betti numbers of $S/\overline{J_c(\H)}$
  and $R/J_c(\H)$ coincide, we are finished by Theorem (\ref{t:JcHBN})
  and Migliore and Nagel's bound (Theorem (\ref{t:MiglioreNagel})).
\end{proof}

\bibliography{hfbib}
\bibliographystyle{plain}
\end{document}